\newtheorem{theorem}{Theorem}
\newtheorem{proposition}{Proposition}
\newtheorem{lemma}{Lemma}
\newtheorem*{corollary}{Corollary}
\newtheorem*{theorem*}{Theorem}
\theoremstyle{definition}
\newtheorem{definition}{Definition}
\newtheorem*{remark}{Remark}
\newtheorem*{example}{Example}
\begin{document}


\title{\bf Hypergroups over the group and generalizations of Schreier's theorem on group extensions}

\author{\textsc{S.H.Dalalyan}\\ {\rm\textsl{Yerevan State University}}}

\date{\today}

\maketitle

\begin{abstract}
Let $H$ be a group, $m$ be a positive integer, $Ext_m H$ be
the set of all isomorphic in $G$ classes of group monomorphisms
$\varphi: H \rightarrow G$ such that index of $\varphi(H)$ in $G$
is $m$.
The main goal of this paper is to describe  the elements of $Ext_m H$
in terms of a new concept of hypergroups over the group.
The obtained result is a very broad generalization of the Schreier
theorem (1926).
As an application, a series of intermediate generalizations is obtained;
particularly, a description of the set $Ext (H, Q)$ of isomorphic classes
of all extensions of a noncommutative group $H$ by $Q$.
\end{abstract}

Keywords: hypergroup over the group, exact product, group extension.

MSC: 20P99, 20D40, 20E22.


\section{Introduction }

In group theory, there are two diametrically opposite understandings
of the expression
``the extension of one group by another group''.
Let
$$
E \rightarrow H \rightarrow G \rightarrow Q \rightarrow E
$$
be a short exact sequence of groups.
Then $G$ is called an extension of $Q$ by $H$,
as well as an extension of $H$ by $Q$.
The second name is in better agreement
with the tradition established in other areas of mathematics
(fields extension, rings extension, \textit{etc.}).
In this paper, more generally, we shall call {\it an extension
of a group $H$} any monomorphism of groups
$\varphi: H \rightarrow G$.

  For any extension $\varphi: H \rightarrow G$,
  the image $\varphi (H)$ is a subgroup of $G$,
 isomorphic to $H$, and actually is identified with $H$.
  The index of the subgroup $\varphi(H)$ of the group $G$
  is called the {\it degree} of the extension $\varphi$.

 Let $Ext_m H$ be the set of classes of isomorphic in $G$
 extensions of the group $H$ of a given degree $m$.
 The main goal of this paper is to introduce a concept
 of a hypergroup over the group  and to describe in terms
 of this concept the elements of $Ext_m H$.

The structure of a hypergroup over the group arises
in a standard way on each complementary (sub)set $M$
to a subgroup $H$ of any group $G$
and the corresponding object is denoted by $M_H$.
The concept of a hypergroup $M_H$ over the group
unifies and generalizes the concepts
of a group, field, linear space over the field.
All hypergroups over the group together with
their morphisms form a category (Sec. 2).

The above mentioned standard construction of a
hypergroup over the group is universal:
any hypergroup $M_H$ over the group is isomorphic
to a hypergroup over the group, which is obtained
in such a standard way.
A key role in the proof of this property of universality
plays the notion of an (outer) exact product
associated with a hypergroup over the group.
This notion generalizes the notions of the direct product,
the semidirect product and the general product in the sense
of B. Neumann  (Sec. 3).

Let $Hg (H, M)$ be the set of classes of isomorphic hypergroups $M_H$
over the group $H$.
In Section 4, we define a group $\mathcal G$ and an action $\mathcal A$
of this group on $Hg (H, M)$.

Let $Hg (H, M) / {\mathcal A}$ be the corresponding quotient-set
(i.e. the set of orbits under this action).
The main result of this paper is the following theorem of Sec. 5.

\begin{theorem}
\label{th1}
 There is a canonical bijection between the  sets $Ext_m (H)$ and
 $Hg (H, M) / {\mathcal A}$, where $m = |M|$.
\end{theorem}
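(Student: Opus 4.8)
The plan is to realize the bijection as a pair of mutually inverse maps $\Phi : Ext_m(H) \to Hg(H,M)/\mathcal A$ and $\Psi : Hg(H,M)/\mathcal A \to Ext_m(H)$, and then to verify that the action $\mathcal A$ of Section 4 is precisely the ambiguity by which the two constructions fail to be inverse on the nose. First I would define $\Phi$. Given an extension $\varphi : H \to G$ of degree $m$, identify $H$ with $\varphi(H) \le G$ and choose a complement $M'$ to $H$ in $G$ (a transversal of the cosets), so that $|M'| = m = |M|$. The standard construction of Section 2 equips $M'$ with the structure of a hypergroup over the group $H$, and fixing a bijection $M' \to M$ transports it to a hypergroup in $Hg(H,M)$. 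The essential point is that its class modulo $\mathcal A$ is independent of all choices: replacing $\varphi$ by an isomorphic extension, replacing the complement $M'$ by another complement $M''$, and replacing the bijection $M' \to M$ each alter the resulting hypergroup only within a single $\mathcal A$-orbit. This is exactly what the group $\mathcal G$ and its action $\mathcal A$ are constructed to encode, so $\Phi$ is well defined.

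Next I would define $\Psi$. Given a hypergroup $M_H$ over the group, form the associated outer exact product $G = G(M_H)$ of Section 3; inside it $H$ sits as a genuine subgroup admitting $M$ as a transversal, so the inclusion is a monomorphism $\varphi : H \to G$ whose image has index $|M| = m$, giving a class in $Ext_m(H)$. Here the fact that the exact product generalizes the direct, semidirect and Neumann general products is what guarantees that $G$ really is a group and that $H \hookrightarrow G$ has the correct index. I would then check that $\mathcal A$-equivalent hypergroups produce isomorphic-in-$G$ extensions, so that $\Psi$ descends to a well-defined map on $Hg(H,M)/\mathcal A$.

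Finally I would prove that $\Phi$ and $\Psi$ are mutually inverse, and both halves reduce to the universality statement and its proof via exact products. For $\Psi \circ \Phi = \mathrm{id}$ I would show that the exact product of the hypergroup read off from $(G,H,M')$ reconstructs $G$ together with its subgroup $H$ up to isomorphism of extensions, i.e. that every pair $(G,H)$ is recovered as the exact product of the hypergroup carried by one of its transversals. For $\Phi \circ \Psi = \mathrm{id}$ I would show that reading off the standard hypergroup from $G(M_H)$ returns $M_H$ up to the choices absorbed by $\mathcal A$; the universality of the standard construction guarantees the reconstructed hypergroup is isomorphic to the original one.

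The hard part will be the bookkeeping for $\mathcal A$: I expect the main obstacle to be proving both implications of the statement that two extensions are isomorphic in $G$ if and only if their associated hypergroups lie in the same $\mathcal A$-orbit. The forward implication requires tracking how an isomorphism $G \to G'$ of extensions, combined with a change of complement, transforms the factor data of the standard construction; the reverse implication requires showing that every element of $\mathcal G$ is realized by such a change of data, so that no orbit is accidentally split and none is accidentally merged. Once $\mathcal A$ is matched exactly to this group of admissible reparametrizations of a transversal, the well-definedness of $\Phi$ and the injectivity of $\Psi$ follow together, while surjectivity of $\Phi$ is delivered by the universality of the standard construction.
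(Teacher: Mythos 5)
Your proposal follows essentially the same route as the paper: the paper defines the map $\mathcal H$ by the standard construction on a chosen transversal (checking independence of the representative extension, the transversal, and the bijection $M\to \overline M$ exactly as you list, these being absorbed by the $H^M$- and $S_M$-parts of $\mathcal G$), defines $\mathcal E$ via the outer exact product $H\odot M$ with the monomorphism $f_0$, and proves mutual inverseness using the universality theorem and an explicit isomorphism $\pi:(H\odot M)_{\tilde\phi}\to G$. The only small slip is attributing the group axioms of $H\odot M$ to its generalizing known products (they are verified directly in Lemmas 2--4), and noting that $H$ embeds via $\alpha\mapsto(\alpha\cdot\theta)o$ rather than a literal inclusion; neither affects the plan.
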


This theorem is a very broad generalization of the Schreier theorem (1926),
which can be obtained from this theorem by imposing  a number of restrictions.

Any hypergroup $M_H$ over the group has a system of structural mappings
$\Omega = (\Phi, \Psi, \Xi, \Lambda)$.
Firstly, by considering the set $Hg_0(H, M)$ of all hypergroups $M_H$ over the group
with a trivial action of the group $H$ (i.e. with a trivial structural
mapping $\Phi$),
we prove that the set $Ext_{m,0} (H)$, consisting of all normal extensions $\varphi$
(it means that $\varphi(H)$ is a normal subgroup of $G$),
is bijective to $Hg_0(H, M) \slash {\mathcal A}$
(Theorem $\ref{th11}$, Section 6).

Then, we consider the subsets $Hg_0(H, M, [\Xi])$ and $Hg_0(H, M, \Xi)$ of $Hg_0(H, M)$,
consisting of all hypergroups over the group with a fixed isomorphity class $[\Xi]$,
in the first case, and with a fixed $\Xi$, in the second case,
and prove that the sets of orbits $Hg_0(H, M, [\Xi]) \slash {\mathcal A}$ and
$Hg_0(H, M, \Xi) \slash {\mathcal A}_0$ are bijective to the set $Ext (H, Q)$
of isomorphic classes of all extensions of $H$ by the group $Q = (M, \Xi)$
(Theorems $\ref{th12}$ and $\ref{th13}$, Section 6).

Finally, by considering only the commutative groups $H$, we get the Schreier's theorem
(Theorem $\ref{th16}$, Sec. 7).

We want to emphasize that Theorems $\ref{th12}$ and $\ref{th13}$
are generalizations of Schreier's theorem in the most general case
of a noncommutative group $H$.


\section{The standard construction \\
of a hypergroup over the group}

\label{sec.2}

Let $G$ be a multiplicative group and $H$ be a subgroup of $G$. Let
$$
Q = H \backslash G = \{ H \cdot a, \, a \in G  \}
$$
be the corresponding right quotient-set and
$$
\psi: G \rightarrow Q, \quad a \mapsto H \cdot a
$$
be the associated surjective right quotient-map.
Recall that a {\it section} of a surjective map $\psi$ is a map $\sigma$,
satisfying the condition $\sigma \circ \psi = id$.
A {\it right transversal} to a subgroup $H$ of a group $G$
is a subset $M$ of $G$ such that $|M \cap (H \cdot a)| = 1$ for any element $a \in G$.
A subset $M$ of $G$ is a right transversal to the subgroup $H$ if and only if
$M$ is the image of a section $\sigma$ of the right quotient-map $\psi$.

\begin{definition}
It is said that a (multiplicative) group $G$ is an {\it exact product}
of its subsets $A$ and $B$ if for any $x \in G$ there are unique $a \in A$ and
$b \in B$ such that $x = a \cdot b$. In this case we write $G = A \odot B$.
A subset $M\subset G$ is called a {\it right complementary set} to a subgroup $H$
of a group $G$ if $G = H \odot M$.
It is not difficult to check that $M$ is a right complementary set to
a subgroup $H$ of a group $G$ if and only if $M$ is a right transversal to $H$.
\end{definition}

Let $G = H\odot M$. Define the mappings
\begin{align*}
&\Phi: M \times H \rightarrow M,    & & \Phi(a, \alpha) = a^\alpha,  &&
&&\Psi: M \times H \rightarrow H,    & & \Psi(a, \alpha) = {^a}\alpha,\\
&\Xi: M \times M \rightarrow M,     & & \Xi(a, b) =  [a, b], &&
&&\Lambda: M \times M \rightarrow H, & & \Lambda(a, b) = (a, b),
\end{align*}
by formulas
\begin{align*}
&&&
&& \tag{F1}\label{F1}
a \cdot \alpha = {^a}\alpha \cdot a^\alpha, \qquad
a \cdot b = (a, b) \cdot [a, b] &&
\end{align*}
and let $\Omega = (\Phi, \Psi, \Xi, \Lambda)$.

\begin{theorem}
\label{th2}
The system of mappings $\Omega$ has the following properties.
\begin{itemize}
\item[\bf{ P1)}]
The mapping $\Xi$ is a binary operation such that
\begin{itemize}
\item[$(i)$]
$(M, \Xi)$ is a right quasigroup, i. e.
any equation $[x, a] = b$ with elements $a, b \in M$
has a unique solution in $M$;
\item[$(ii)$]
the binary operation $\Xi$ has a left neutral element $o$,
which means that  $[o, a] = a$ for any $a \in M$.
\end{itemize}

\item[\bf{P2)}]  The mapping $\Phi$ is an action of the group $H$ on the set $M$, that is
\begin{itemize}
\item[$(i)$]
$(a^\alpha)^\beta = a^{\alpha \cdot \beta}$
for any pair of elements $\alpha, \beta \in H$ and for every $a \in M$;
\item[$(ii)$]
$a^\varepsilon = a$ for any  $a \in M$, where $\varepsilon$ is the neutral element of the group $H$.
\end{itemize}

\item[\bf{P3)}]  The mapping $\Psi$ is surjective.

\item[\bf{P4)}]  The following identities A1) - A5) hold:
\begin{itemize}
\item[$(A1)$] $\quad {^a}(\alpha \cdot \beta)  = {^a}\alpha \cdot {^{a^\alpha}}\beta;$

\item[$(A2)$] $\quad [a, b]^\alpha  = [a^{^b\alpha}, b^\alpha];$

\item[$(A3)$] $\quad (a, b) \cdot \, {^{[a, b]}}\alpha  =
{^a}({^b}\alpha) \cdot (a^{{^b}\alpha}, b^\alpha);$

\item[$(A4)$] $\quad [[a, b], c]) = [a^{(b, c)}, [b, c]];$

\item[$(A5)$] $\quad (a, b) \cdot ([a, b], c) = {^a}(b, c) \cdot (a^{(b, c)}, [b, c]).$
\end{itemize}
\end{itemize}
\end{theorem}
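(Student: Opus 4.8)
The plan is to exploit the single structural fact underlying all of $\Omega$: because $G = H\odot M$, every element of $G$ has a \emph{unique} factorisation as an $H$-element times an $M$-element, and each of the four maps merely reads off one component of a product. Hence I expect every assertion in P1)--P4) to follow either by counting right cosets or by writing a product of two or three factors in two associativity-ordered ways, reducing both to $H\odot M$-form, and then comparing the $H$-components and the $M$-components separately.

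First I would fix notation and locate the neutral element. Let $o$ denote the unique point of $M$ lying in the coset $H=H\cdot e$; since $e\in H$ we have $o\in H$. For $a\in M$ the product $o\cdot a$ is already in $H\odot M$-form (as $o\in H$, $a\in M$), so uniqueness forces $(o,a)=o$ and $[o,a]=a$, which is the left neutral in P1)$(ii)$. For P1)$(i)$ I would argue combinatorially rather than algebraically: solving $[x,a]=b$ for $x\in M$ amounts to demanding that the $M$-component of $x\cdot a$ equal $b$, i.e. $H\cdot x\cdot a = H\cdot b$, i.e. $H\cdot x = H\cdot b\cdot a^{-1}$; since $M$ meets each right coset exactly once, the coset $H\cdot b\cdot a^{-1}$ determines $x$ uniquely, so $x\mapsto[x,a]$ is a bijection of $M$.

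Next I expect P2) and P3) to drop out quickly. For P2)$(ii)$, $a=e\cdot a$ is the factorisation of $a\in M$, so ${}^a\varepsilon=\varepsilon$ and $a^\varepsilon=a$. For the action law P2)$(i)$ I would expand $a\cdot\alpha\cdot\beta$ the two ways: on one hand $a\cdot(\alpha\beta)={}^a(\alpha\beta)\cdot a^{\alpha\beta}$, and on the other hand $(a\cdot\alpha)\cdot\beta = \big({}^a\alpha\cdot{}^{a^\alpha}\beta\big)\cdot(a^\alpha)^\beta$ after pushing $\beta$ through $a^\alpha$ via $(F1)$; comparing $M$-components yields $a^{\alpha\beta}=(a^\alpha)^\beta$, which is P2)$(i)$, and comparing $H$-components simultaneously yields $(A1)$. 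For P3), taking $a=o$ gives $o\cdot\alpha\in H$, whence its $M$-part is $o^\alpha=o$ and its $H$-part is ${}^o\alpha = o\,\alpha\,o^{-1}$; as $\alpha$ runs over $H$ and $o\in H$, the value $o\,\alpha\,o^{-1}$ runs over all of $H$, so $\Psi$ is already surjective on the slice $\{o\}\times H$.

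The bulk of P4) is then uniform. Identities $(A2)$ and $(A3)$ I would obtain together by computing $a\cdot b\cdot\alpha$ in the two groupings $(a\cdot b)\cdot\alpha$ and $a\cdot(b\cdot\alpha)$, reducing each to $H\odot M$-form and matching the $M$-parts (this gives $(A2)$) and the $H$-parts (this gives $(A3)$). Likewise $(A4)$ and $(A5)$ come from the two groupings of $a\cdot b\cdot c$ with $a,b,c\in M$: the $M$-components give $[[a,b],c]=[a^{(b,c)},[b,c]]$, and the $H$-components give $(A5)$. The main obstacle here is not conceptual but organisational: in each expansion one must apply the two defining relations $(F1)$ in the correct order, keep careful track of which intermediate factors lie in $H$ and which in $M$, and regroup products of two $M$-elements through $\Lambda$ and $\Xi$ before the final comparison. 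Once this bookkeeping is set up, uniqueness of the factorisation does all the remaining work, so I would present $(A1)$--$(A5)$ as five instances of the same ``compare the two components of a reassociated product'' step.
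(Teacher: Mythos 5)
Your proposal is correct, and for everything except P1$(i)$ it is essentially the paper's own argument: the author likewise derives P2$(i)$ together with $(A1)$ from $(a\cdot\alpha)\cdot\beta=a\cdot(\alpha\cdot\beta)$, the pairs $(A2)$--$(A3)$ and $(A4)$--$(A5)$ from the two bracketings of $a\cdot b\cdot\alpha$ and $a\cdot b\cdot c$, P2$(ii)$ from $a\cdot\varepsilon=\varepsilon\cdot a$, P1$(ii)$ and P3 from the decomposition $e=\theta\cdot o$ and the computation of $o\cdot\alpha$ (your ${}^o\alpha=o\,\alpha\,o^{-1}$ is the paper's $\theta^{-1}\alpha\,\theta$, since $o=\theta^{-1}$ as a group element). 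The one place you genuinely diverge is P1$(i)$. The paper proves it algebraically: it introduces $a^{-1}=a^{(-1)}\cdot a^{[-1]}$, shows via a chain of equivalences (Lemma 1) that any solution of $[x,a]=b$ must be $x=[b^{a^{(-1)}},a^{[-1]}]$, and then substitutes this value back to verify it works, obtaining the auxiliary identity $(S)$ and the relations $[a,o]=a^{\theta^{-1}}$, $(a,o)={}^a(\theta^{-1})$ along the way. You instead observe that $[x,a]$ is the unique point of $M$ in the coset $H\cdot x\cdot a$, so $x\mapsto[x,a]$ factors as the transversal bijection $M\to H\backslash G$ followed by right translation by $a$ and the inverse transversal bijection, hence is itself a bijection. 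Your argument is shorter and conceptually cleaner, and it is valid since here $M$ really is a transversal in an ambient group $G$; the trade-off is that it produces no explicit formula for the solution and none of the identities $(S)$, $(A10)$, $(A11)$ that the paper's computation yields as by-products and reuses later (in Proposition 2 and Section 3, where the same facts must be rederived axiomatically for abstract hypergroups, to which your coset argument does not transfer).
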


\begin{proof}
The relationships $(a^\alpha)^\beta = a^{\alpha \cdot \beta}$ and (A1)
are obtained by applying (F1)  to the identity of associativity
$$
(a \cdot \alpha) \cdot \beta = a \cdot (\alpha \cdot \beta), \quad
a \in M, \, \alpha, \beta \in H,
$$
and by using the uniqueness of the decomposition, associated with the exact product
$G = H\odot M$.

Similarly, relations (A2), (A3) and (A4), (A5) can be obtained, respectively,
from the identities of associtivity
$$
(a \cdot b) \cdot \alpha = a \cdot (b \cdot \alpha), \quad
(a \cdot b) \cdot c = a \cdot (b \cdot c)
$$
with $a, b, c \in M, \, \alpha \in H$,
whereas the relation $a^\varepsilon = a$ (together with the relation ${^a}\varepsilon = \varepsilon$)
can be obtained from the equality
$$
a \cdot \varepsilon = \varepsilon \cdot a.
$$

Let
$$
e = \theta \cdot o, \quad \theta \in H, o \in M
$$
be the decomposition of the neutral element $e = \varepsilon \in H \odot M$ of $G$.
Then for any $a \in M$ we will have
$$
\varepsilon \cdot a = \theta \cdot o \cdot a = (\theta \cdot (o, \, a)) \cdot[o, \, a].
$$
Consequently, $[o, a] = a$ (and $\theta \cdot (o, a) = \varepsilon$).

Since
$$
(\alpha \cdot \theta) \cdot o = \alpha = \theta \cdot (o \cdot \alpha) =
(\theta \cdot {^o}\alpha) \cdot o^\alpha,
$$
we get the relations ${^o}\alpha = \theta^{-1} \cdot \alpha \cdot \theta$
and $o^\alpha = o$.
As a consequence of the first relation, the mapping $\Psi$ is surjective.

To complete the proof of Theorem $\ref{th2}$
it remains to prove property P1(i).

Suppose that for any $a \in M$ the elements $a^{(-1)} \in H, \, a^{[-1]} \in M$ are
the uniquely determined elements, satisfying
$$
a^{-1} = a^{(-1)} \cdot a^{[-1]}.
$$
Then
$$
a^{(-1)} \cdot (a^{[-1]}, \, a) \cdot [a^{[-1]}, \, a] = a^{(-1)} \cdot a^{[-1]} \cdot a
= a^{-1} \cdot a = e = \theta \cdot o,
$$
and consequently
$$
[a^{[-1]}, \, a] = o, \quad a^{(-1)} \cdot (a^{[-1]}, \, a)) = \theta.
$$
The first equalities means that $a^{[-1]}$ is a left inverse element to $a$
with respect to the operation $\Xi$, and the second equality can be considered as a
variant of the definition of $a^{(-1)}$.

\begin{lemma}
\label{lm0}
For any elements $x, a, b \in M$ we have
$$
[x, a] = b \quad \Leftrightarrow \quad
x = [b^{a^{(-1)}}, a^{[-1]}], \quad
(x, a) \cdot {^b}(a^{(-1)}) \cdot (b^{a^{(-1)}}, a^{[-1]})  = \varepsilon.
$$
\end{lemma}

\begin{proof}
By  formula (F1)
we have the following chain of equivalent equalities:
$$
[x, a] = b \quad \Leftrightarrow \quad
(x, a) \cdot [x, a] = (x, a) \cdot b \quad \Leftrightarrow \quad
x \cdot a = (x, a) \cdot b \quad \Leftrightarrow \quad
$$
$$
x = (x, a) \cdot b \cdot a^{-1} \quad \Leftrightarrow \quad
x = (x, a) \cdot b \cdot a^{(-1)} \cdot a^{[-1]} \quad \Leftrightarrow \quad
$$
$$
x = (x, a) \cdot {^b}(a^{(-1)}) \cdot b^{a^{(-1)}} \cdot a^{[-1]} \quad \Leftrightarrow \quad
$$
$$
x = (x, a) \cdot {^b}(a^{(-1)}) \cdot (b^{a^{(-1)}}, a^{[-1]}) \cdot [b^{a^{(-1)}}, a^{[-1]}] \quad \Leftrightarrow \quad
$$
$$
x = [b^{a^{(-1)}}, a^{[-1]}], \quad  (x, a) \cdot {^b}(a^{(-1)}) \cdot (b^{a^{(-1)}}, a^{[-1]}) = \varepsilon.
$$
\end{proof}

\begin{corollary}
The equation $[x, a] = b$ for $a, b \in M$ can have
no more than one solution in $M$.
Such a solution can be only $x = [b^{a^{(-1)}}, a^{[-1]}]$.
This value of $x$ will be a solution if and only if the equality
$$
\hspace{30mm}
([b^{a^{(-1)}}, a^{[-1]}], a) \cdot {^b}(a^{(-1)}) \cdot (b^{a^{(-1)}}, a^{[-1]})
 = \varepsilon
 \hspace{15mm} (S)
$$
holds.
\end{corollary}

Substituting the value $x = [b^{a^{(-1)}}, a^{[-1]}]$
in equation, we check that it is a solution:
$$
[[b^{a^{(-1)}}, a^{[-1]}], a] = [[b^{a^{(-1)} \cdot (a^{[-1]}, a)}, [a^{[-1]}, a]] =
[b^\theta, o] = (b^\theta)^{\theta^{-1}} = b.
$$
Here we use the relation $[a, o] = a^{\theta^{-1}}$,
which can be deduced
(together with the relation $(a, o) = {^a}(\theta^{-1})$)
 from the equalities
$$
(a, o) \cdot [a, o] = a \cdot o = a \cdot \theta^{-1} = {^a}(\theta^{-1}) \cdot a^{\theta^{-1}}.
$$
Thus, Theorem \ref{th2} is completely proved.
Note that along the proof above we have also proved the identity $(S)$.
\end{proof}

\begin{definition}
\label{df2}
Let $H$ be an arbitrary group.
A {\it right hypergroup over the group $H$} is
a set $M$ together with a system of structural mappings
$\Omega = (\Phi, \Psi, \Xi, \Lambda)$, satisfying conditions
P1, P2, P3, P4. We denote by $M_H$ this hypergroup over the group.
\end{definition}

The first definition of a hypergroup over the group was given in $\cite{D1}$.
That definition was very cumbersome and contained some extra conditions
(which can be deduced from a smaller number of conditions).
An improved definition, close to Definition 2, is given in $\cite{D2}$.

\begin{remark} Similarly, by considering the left quotient-set
$G / H = \{ a \cdot H, \quad a \in G \}$
one can define the left hypergroup over the group $H$.
All notions and results for right hypergroups over the group
allow dual notions and results in the case of
left hypergroups over the group.
In the rest of this paper we consider only right hypergroups over
the group and omit the word "right".
\end{remark}

 Based on Definition  $\ref{df2}$ and Theorem $\ref{th2}$,
 the following theorem can be formulated.

 \begin{theorem}
\label{th3}
A hypergroup $M_H$ over the group can be canonically associated to any subgroup $H$ of
an arbitrary group $G$ and each complementary set $M$ to $H$ in $G$.
\end{theorem}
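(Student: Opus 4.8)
The plan is to read off Theorem \ref{th3} as the conjunction of the standard construction of Section \ref{sec.2}, Theorem \ref{th2}, and Definition \ref{df2}: almost all of the work has already been done, so the proof is a matter of assembling these pieces and checking that the association involves no arbitrary choices. First I would fix the data, namely a group $G$, a subgroup $H$ of $G$, and a right complementary set $M$ to $H$, i.e. a subset $M \subset G$ with $G = H \odot M$. By the definition of the exact product, every $x \in G$ admits a \emph{unique} factorization $x = \alpha \cdot a$ with $\alpha \in H$ and $a \in M$.

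Applying this uniqueness to the products $a \cdot \alpha$ (for $a \in M$, $\alpha \in H$) and $a \cdot b$ (for $a, b \in M$) and reading off the two factors exactly as prescribed by formula \eqref{F1}, I obtain four single-valued mappings
\[
\Phi(a,\alpha)=a^\alpha,\quad
\Psi(a,\alpha)={}^{a}\alpha,\quad
\Xi(a,b)=[a,b],\quad
\Lambda(a,b)=(a,b).
\]
The crucial observation at this step is that these are well-defined functions precisely because the decomposition supplied by $G = H \odot M$ is unique; this is the only verification that the construction itself (as opposed to its properties) actually requires.

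Next I would invoke Theorem \ref{th2}: the system $\Omega = (\Phi, \Psi, \Xi, \Lambda)$ just produced satisfies properties P1, P2, P3 and P4. Since Definition \ref{df2} declares a hypergroup over the group $H$ to be exactly a set $M$ equipped with such a system satisfying P1--P4, the pair $(M, \Omega)$ is a hypergroup over the group, which is denoted $M_H$. This completes the construction.

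It remains only to justify the word \emph{canonical}. The passage from $(G, H, M)$ to $\Omega$ uses nothing beyond the group multiplication of $G$ applied to the products $a \cdot \alpha$ and $a \cdot b$, together with the unique factorization furnished by $G = H \odot M$; at no stage is a choice of representatives, ordering, or auxiliary structure made. Hence $M_H$ is determined by and depends only on the triple $(G, H, M)$, so the assignment is canonical in the required sense. I do not anticipate any genuine obstacle here: the entire analytic content — the associativity-derived identities (A1)--(A5), the surjectivity of $\Psi$, and the right-quasigroup property P1$(i)$ together with its left neutral element — is already established in Theorem \ref{th2}, so the present statement is essentially a packaging of that result through Definition \ref{df2}.
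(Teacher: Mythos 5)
Your proposal is correct and follows exactly the route the paper intends: Theorem \ref{th3} is stated there as an immediate consequence of the standard construction via formula (F1), Theorem \ref{th2} (which establishes P1--P4), and Definition \ref{df2}, with no separate argument given. Your additional observations --- that well-definedness of $\Phi,\Psi,\Xi,\Lambda$ rests on the uniqueness of the decomposition in $G=H\odot M$, and that canonicity holds because no auxiliary choices are made --- are accurate and simply make explicit what the paper leaves implicit.
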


We call the corresponding construction the
{\it standard construction} of a hypergroup over the group.

\begin{remark}
Using the standard construction of a hypergroup over the group,
we have an opportunity for any subgroup $H$ of an arbitrary group $G$
to define an (induced by binary operation of $G$)
operation on the quotient-set $Q$ of $G$ by $H$:
it suffices
to consider an arbitrary section $\sigma$ of the
quotient-map $\psi: G \rightarrow Q$,
to take a  transversal
$M = \sigma(Q)$ to the subgroup $H$, to construct the corresponding
hypergroup over the group $M_H$ and
to transfer the binary operation
$\Xi $ of $M$ on $Q$ by $\sigma$.

Thus, this operation on the quotient-set $Q$
depends on the choice of the section $\sigma$.
Therefore, in general, it is not determined uniquely
(even up to an isomorphism) and it is not a group operation
(it determines on $Q$ a structure of a right quasigroup
with a left neutral element).
If $H$ is a normal subgroup of $G$,
for any section $\sigma$
the induced binary operations on $Q$ are
isomorphic to a unique (quotient-)group operation.
\end{remark}

\begin{definition}
Let $M_H$ and $M'_{H'}$ be two hypergroups over the group
with systems of structural mappings
$\Omega = (\Phi, \Psi, \Xi, \Lambda)$ and
$\Omega' = (\Phi', \Psi, \Xi', \Lambda')$, respectively.

A  {\it morphism of hypergroups over the group}
$$
f: M_H \rightarrow M'_{H'}
$$
is a  pair $f = (f_0, \, f_1)$, consisting of a homomorphism of groups
$$
f_0: H \rightarrow H'
$$
and of a map of sets
$$
f_1: M \rightarrow M',
$$
{\it preserving the structural mappings},
i.e., such that the following diagrams commute:

\vspace{5mm}

\begin{minipage}{0.5\textwidth}
   \begin{tikzcd}
   M \times H \arrow{r}{\Phi} \arrow[swap]{d}{f_1 \times f_0} &
   M \arrow{d}{f_1} \\
   M' \times H'  \arrow{r}{\Phi'} & M'
   \end{tikzcd}
\end{minipage}
\hspace{0.5cm}
\begin{minipage}{0.5\textwidth}
   \begin{tikzcd}
   M \times H \arrow{r}{\Psi} \arrow[swap]{d}{f_1 \times f_0} &
   H \arrow{d}{f_0} \\
   M' \times H'  \arrow{r}{\Psi'} & H'
   \end{tikzcd}
\end{minipage}

\vspace{5mm}

\begin{minipage}{0.5\textwidth}
   \begin{tikzcd}
   M \times M \arrow{r}{\Xi} \arrow[swap]{d}{f_1 \times f_1} &
   M \arrow{d}{f_1} \\
   M' \times M'  \arrow{r}{\Xi'} & M'
   \end{tikzcd}
\end{minipage}
\hspace{0.5cm}
\begin{minipage}{0.5\textwidth}
   \begin{tikzcd}
   M \times M \arrow{r}{\Lambda} \arrow[swap]{d}{f_1 \times f_1} &
   H \arrow{d}{f_0} \\
   M' \times M'  \arrow{r}{\Lambda'} & H'
   \end{tikzcd}
\end{minipage}
\end{definition}

The composition of two morphisms
$$
f: M_H \rightarrow M'_{H'}, \quad
g: M'_{H'} \rightarrow M''_{H''}
$$
is defined componentwise:
$f \circ g = (f_0 \circ g_0, \, f_1 \circ g_1)$.

The identical morphism $1_{M_H}$ of a hypergroup $M_H$ over the group
is given by the pair $(1_H, \, 1_M)$.

\begin{proposition}
\label{pr1}
All hypergroups over the group and their morphisms
together with composites of morphisms and identical morphisms
form a category.
\end{proposition}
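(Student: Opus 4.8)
The plan is to verify the three defining axioms of a category: that the class of morphisms is closed under the stated composition law, that this composition is associative, and that the identity pairs $1_{M_H} = (1_H, 1_M)$ act as two-sided units. Since the objects are sets equipped with structural mappings and the morphisms are pairs of honest maps, no set-theoretic subtlety arises, and the whole argument reduces to formal manipulations with maps and commutative squares.

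First I would establish closure under composition, which is the only step requiring any real attention. Let $f = (f_0, f_1)\colon M_H \to M'_{H'}$ and $g = (g_0, g_1)\colon M'_{H'} \to M''_{H''}$ be morphisms, and set $f \circ g = (f_0 \circ g_0,\, f_1 \circ g_1)$. The first component $f_0 \circ g_0\colon H \to H''$ is a group homomorphism, being a composite of group homomorphisms, and $f_1 \circ g_1\colon M \to M''$ is a map of sets; so it remains only to check that this pair preserves each of the four structural mappings $\Phi, \Psi, \Xi, \Lambda$. For each of them the commutative square required for $f \circ g$ is produced by pasting the corresponding square for $f$ on top of the one for $g$: for instance, for $\Phi$ one reads off $\Phi \circ (f_1 \circ g_1) = (f_1 \times f_0) \circ \Phi' \circ g_1 = (f_1 \times f_0) \circ (g_1 \times g_0) \circ \Phi''$, and the outer rectangle is exactly the preservation diagram for $f \circ g$. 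The only point that must be recorded is the elementary functoriality identity $(f_1 \times f_0) \circ (g_1 \times g_0) = (f_1 \circ g_1) \times (f_0 \circ g_0)$ for Cartesian products of maps, together with its variants $(f_1 \times f_1) \circ (g_1 \times g_1) = (f_1 \circ g_1) \times (f_1 \circ g_1)$ used for $\Xi$ and $\Lambda$. The same diagram chase works verbatim for $\Psi$, $\Xi$, and $\Lambda$, so $f \circ g$ is again a morphism of hypergroups over the group.

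The remaining two axioms are immediate and reduce componentwise to familiar facts about composition of maps. Associativity holds because composition of group homomorphisms and composition of set maps are each associative, so $(f \circ g) \circ h$ and $f \circ (g \circ h)$ agree in both components and hence coincide as morphisms. For the unit law, the four diagrams defining $1_{M_H} = (1_H, 1_M)$ commute trivially, every edge being an identity, so $1_{M_H}$ is a morphism; and since $1_H$ and $1_M$ are two-sided units for composition of maps, one computes $1_{M_H} \circ f = (1_H \circ f_0,\, 1_M \circ f_1) = (f_0, f_1) = f$ and likewise $f \circ 1_{M'_{H'}} = f$. Thus all axioms hold, and the hypergroups over the group with their morphisms form a category. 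I do not expect any genuine obstacle here; the verification of closure under composition via diagram pasting is the most substantial part, and it is entirely formal.
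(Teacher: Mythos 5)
Your proof is correct and simply carries out in detail the routine verification (closure of morphisms under composition via pasting of commutative squares, associativity and unit laws componentwise) that the paper dismisses with the single line ``the verification of axioms of category is trivial.'' There is no difference in approach, only in the level of explicitness.
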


\begin{proof}
The verification of axioms of category is trivial.
\end{proof}

As a consequence of this proposition, we can apply to the category of
hypergroups over the group all notions and results of the general
theory of categories.
Emphasize that an isomorphism of hypergroups over the group consists of
an isomorphism of groups and a bijection of sets.

We denote the category of hypergroups over the group by $Hg$.
All hypergroups over the fixed group $H$ and their morphism with identical
first component form a subcategory $Hg \slash H$ of the category $Hg$.
If the group $H = \{ \varepsilon \}$ is trivial,
we denote $Hg\slash H$ by $Hg_0$.

\begin{example}
Let $H = \{ \varepsilon \}$ be a trivial group. Then the structural mappings
$\Phi, \Psi, \Lambda$ are uniquely defined and $\Xi$ is an associative binary
operation on $M$ with a left neutral element $o$ and with a left inverse
element $a^{[-1]}$ for any element $a$ from $M$. Consequently, $(M, \Xi)$
is a group.
Therefore the correspondence $M_H \mapsto (M, \Xi)$
determines an isomorphism between $Hg_0$ and the group category.
\end{example}

\begin{example}
Let $k$ be a field (generally with a noncommutative multiplication).
Suppose that $L$ is a linear space over $k$ with
a right multiplication by elements of $k$.
Let $(M, \Xi)$ be the additive group of $L$ and
$H$ be the multiplicative group $k^*$ of $k$.
Let $\Phi$ be the multiplication of elements $a \in M = L$
by elements $\alpha \in H \subset k$ (that is $a^\alpha = a \cdot \alpha$),
and mappings $\Psi$ and $\Lambda$ be trivial, that is
$^a\alpha = a$ and $(a, b) = \varepsilon$
for any $a, b \in M$, $\alpha \in H$.
Then properties $P1 - P4$ hold and, consequently, $M_H$ is
a hypergroup over the group.
Thus we have a canonical injection of the category of linear spaces
over $k$ in $Hg\slash k^*$.
 \end{example}

 \begin{example}
  Suppose that $k$ is an arbitrary field (generally, with a
 noncommutative multiplication).
 Let $(M, \Xi)$ be the additive group,
 $H = k^*$ be the multiplicative group of $k$,
 $\Phi (a, \alpha) = a \cdot \alpha$,
 $\Psi (a, \alpha) = \alpha$,
 $\Lambda(a, b) = \varepsilon$
 for all $\alpha \in H, \, a, b \in M$.
 Then the system of structural mappings
 $\Omega = (\Phi, \Psi, \Xi, \Lambda)$
 determines a hypergroup $M_H$.
 By assigning $k \mapsto M_H$ we get an injective functor
 from the category of (noncommutative) fields to $Hg$.
 \end{example}


\section{The universality property \\
of standard construction}

\label{sec.3}

In this section we prove a fundamental property of universality
for the standard construction of a hypergroup over the group.
In Theorem $\ref{th5}$, we prove that any hypergroup over the group,
up to isomorphism, may be obtained by the standard construction.
This result implies the following principle:

{\it Any property, which is true for all hypergroups over the group
that are obtained by the standard construction, is true for an
arbitrary hypergroup over the group}.

The proof of the property of universality is based on properties that are
postulated in the Definition $\ref{df2}$ of a hypergroup over
the group, but also uses some additional properties of hypergroups
over the group, which are simple consequences of these postulated conditions.
Some these properties
we have already got in the proof of Theorem $\ref{th2}$,
but only for the hypergroups over the group
arising by the standard construction.
To have a possibility to apply these properties in the general case
(to hypergroups over the group defined by Definition $\ref{df2}$)
we have to deduce these properties directly from the properties
postulated in Definition $\ref{df2}$.
Thus, the above principles will be comprehensive.

\begin{proposition}
\label{pr2}
Let  $M_H$ be an arbitrary hypergroup over the group,
$\varepsilon$ be the neutral element of the group $H$,
$o$ be the left neutral element of the operation $\Xi$
and $\theta = (o, o)^{-1}$.
Then  for any $a \in M, \, \alpha \in H$ the following equalities hold:
\begin{itemize}
\item[]
\begin{itemize}
\item[$(A6)$] $\quad ^a\varepsilon = \varepsilon$;
\item[$(A7)$] $\quad o^\alpha = o$;
\item[$(A8)$] $\quad ^o\alpha = \theta^{-1} \cdot \alpha \cdot \alpha$;
\item[$(A9)$] $\quad  (o, a) = \theta^{-1}$;
\item[$(A10)$] $\quad [a, o] = a^{\theta^{-1}}$;
\item[$(A11)$] $\quad (a, o) = {^a}(\theta^{-1})$.
\end{itemize}
\end{itemize}

\begin{proof}
(A6). It is sufficient to substitute $\alpha = \varepsilon$ in (A1) and to use P2(ii).\\
(A7). Substitute $a = o$ in (A2), use P1(ii), P1(i) and P3. \\
(A8). Substitute $a = b = o$ in (A3), use (A7), P3. \\
(A9). Substitute $a = b = o$ in (A5), use (A8), (A7), P1(ii). \\
(A10). Substitute $b = o$ in (A4), use P1(i). \\
(A11). Substitute $b = c = o$ in (A5) , use (A10).
\end{proof}
\end{proposition}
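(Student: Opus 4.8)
The plan is to obtain each of (A6)--(A11) as a \emph{degenerate case} of one of the postulated identities (A1)--(A5), specialising one or more of the free variables to the left neutral element $o\in M$ or to the neutral element $\varepsilon\in H$, and then simplifying with the remaining axioms: the right--quasigroup cancellation P1(i), the action laws P2, and the surjectivity P3 of $\Psi$. I would carry them out roughly in the listed order, since the later identities feed on the earlier ones, but with (A8) postponed as a separate block because (A9) depends on it. The recurring mechanism is that whenever a specialisation produces an equation of the shape $[x,c]=[y,c]$ in $M$ I cancel by P1(i) to get $x=y$, and whenever it produces an equation $u\cdot w=v\cdot w$ in the group $H$ I cancel the common factor $w$.

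Four of the six are then immediate. For (A6) I put $\alpha=\varepsilon$ in (A1) and use $a^{\varepsilon}=a$ (P2(ii)) to read off ${}^{a}\varepsilon=\varepsilon$ after cancelling in $H$. For (A7) I put $a=o$ in (A2) and use $[o,b]=b$ and $[o,b^{\alpha}]=b^{\alpha}$ (P1(ii)) to reduce it to $[o^{{}^{b}\alpha},b^{\alpha}]=[o,b^{\alpha}]$; P1(i) then forces $o^{{}^{b}\alpha}=o$, and since ${}^{b}\alpha$ exhausts $H$ by P3 this is exactly $o^{\gamma}=o$ for all $\gamma$. For (A10) I put $b=o$ in (A4) and use $[o,c]=c$ to obtain $[[a,o],c]=[a^{(o,c)},c]$; P1(i) gives $[a,o]=a^{(o,c)}$ for every $c$, and specialising $c=o$ with $(o,o)=\theta^{-1}$ (the definition of $\theta$) yields $[a,o]=a^{\theta^{-1}}$. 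For (A11) I put $b=c=o$ in (A5); inserting (A10), $[o,o]=o$ and $(o,o)=\theta^{-1}$ turns it into $(a,o)\cdot(a^{\theta^{-1}},o)={}^{a}(\theta^{-1})\cdot(a^{\theta^{-1}},o)$, and cancelling the common right factor in $H$ gives $(a,o)={}^{a}(\theta^{-1})$.

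The hard part is (A8), and (A9) rides on it. Putting $a=b=o$ in (A3) and simplifying with (A7) and $[o,o]=o$ yields only the \emph{self-referential} relation ${}^{o}({}^{o}\alpha)=\theta^{-1}\,({}^{o}\alpha)\,\theta$; that is, the map $\lambda\colon\alpha\mapsto{}^{o}\alpha$ agrees with conjugation by $\theta$ on its own image, but not yet on all of $H$. To close the gap I would first observe, from (A1) with $a=o$ together with (A7) (so that $o^{\alpha}=o$ and ${}^{o}(\alpha\beta)={}^{o}\alpha\cdot{}^{o}\beta$), that $\lambda$ is a group endomorphism of $H$; then I must upgrade the self-referential relation to the genuine statement that $\lambda$ is \emph{onto}, whence its image is all of $H$ and the relation becomes ${}^{o}\alpha=\theta^{-1}\,\alpha\,\theta$ for every $\alpha$. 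Once (A8) is secured, (A9) follows by putting $a=b=o$ in (A5): with (A7) and $[o,c]=c$ this reads $\theta^{-1}(o,c)={}^{o}(o,c)\cdot(o,c)$, giving ${}^{o}(o,c)=\theta^{-1}$, and substituting (A8) yields $(o,c)=\theta^{-1}$.

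I expect the one genuinely delicate point to be precisely the passage from P3 --- which only asserts that the full map $(a,\alpha)\mapsto{}^{a}\alpha$ is surjective --- to surjectivity of the single map $\lambda={}^{o}(-)$, since the bare relation ${}^{o}({}^{o}\alpha)=\theta^{-1}({}^{o}\alpha)\theta$ together with $\lambda$ being an endomorphism does not force $\operatorname{im}\lambda=H$ (in the finite case it only forces $\theta^{-1}(\operatorname{im}\lambda)\theta=\operatorname{im}\lambda$). The tool I would reach for is the auxiliary specialisation of (A3) at $a=o$ with $b$ free, namely ${}^{o}({}^{b}\alpha)=(o,b)\cdot{}^{b}\alpha\cdot(o,b^{\alpha})^{-1}$, which feeds the (by P3 surjective) output ${}^{b}\alpha$ of $\Psi$ directly into $\lambda$ and thereby pins down $\lambda$ on all of $H$. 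All remaining identities are, as indicated above, routine cancellations once (A8) is in place.
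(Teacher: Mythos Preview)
Your approach is essentially identical to the paper's: each of (A6)--(A11) is obtained by specialising one of (A1)--(A5) at $o$ or $\varepsilon$ and cancelling via P1(i), P2, P3. Your substitutions for (A6), (A7), (A9), (A10), (A11) coincide with the paper's (the paper does (A8) before (A10), (A11), but as you note these are independent of (A8)).

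The one substantive difference is at (A8), and here you are in fact \emph{more} careful than the paper. The paper's entire argument is ``substitute $a=b=o$ in (A3), use (A7), P3''; you correctly observe that this substitution yields only $\lambda(\lambda(\alpha))=\theta^{-1}\,\lambda(\alpha)\,\theta$, i.e.\ the desired formula on $\operatorname{im}\lambda$ rather than on all of $H$, and that the invocation of P3 is suspicious because P3 asserts surjectivity of the \emph{two-variable} map $(a,\alpha)\mapsto{}^{a}\alpha$, not of ${}^{o}(-)$ alone. This is a genuine gap in the paper's hint as written. Your proposed repair via (A3) at $a=o$ with $b$ free gives ${}^{o}({}^{b}\alpha)=(o,b)\cdot{}^{b}\alpha\cdot(o,b^{\alpha})^{-1}$, which does feed every element of $H$ into $\lambda$ via P3, but the right-hand side still contains the unknown $(o,b)$ --- which is precisely (A9), itself derived from (A8). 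So your proposed route, as stated, has the same circularity you were trying to avoid; you flag this honestly as the delicate point, but you should be aware that neither your sketch nor the paper's one-line hint actually closes it. One way forward, should you wish to make this rigorous, is to first establish (using only (A1), (A6), (A10), (A11)) that $\theta o$ is a \emph{right} neutral element of the associative operation (F2) on $H\times M$, and then combine this with the unique-left-inverse property (Lemma~4, which uses only P1(i) and P2) to force $\theta o$ to be two-sided neutral; (A8) and (A9) then drop out simultaneously.
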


Let $M_H$ be an arbitrary hypergroup over the group.
Denote by $\overline G$ the Cartesian product $H \times M$ of
sets $H$ and $M$. Here, it is convenient to write
the elements of $\overline G$ in a form
$\alpha a, \, \alpha \in H, a \in M$.

We  define a binary operation on $\overline G$  by the formula
\begin{align}\tag{F2}\label{F2}
\alpha a \cdot \beta b = (\alpha \cdot {^a}\beta \cdot (a^\beta, b)) [a^\beta, b].
\end{align}

\begin{lemma}
\label{lm2}
The operation $(F2)$ is associative:
$$
(\alpha a \cdot \beta b) \cdot \gamma c = \alpha a \cdot (\beta b \cdot \gamma c).
$$
\end{lemma}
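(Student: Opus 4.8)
The plan is to read formula (F2) as the rule that rewrites the ``standard'' product $\alpha \cdot a \cdot \beta \cdot b$ back into the ``$H$-part times $M$-part'' form: for a hypergroup arising from the standard construction the identities (A1)--(A5) are precisely the content of associativity in $G$, so I expect exactly these identities (together with P2(i)) to suffice to verify associativity of (F2) directly from the axioms. Concretely, I would expand both sides by applying (F2) twice. Writing $\delta = \alpha \cdot {}^{a}\beta \cdot (a^{\beta}, b)$ and $d = [a^{\beta}, b]$, so that $\alpha a \cdot \beta b = \delta d$, the left-hand side becomes $\delta d \cdot \gamma c = (\delta \cdot {}^{d}\gamma \cdot (d^{\gamma}, c))\,[d^{\gamma}, c]$. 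Writing $\mu = \beta \cdot {}^{b}\gamma \cdot (b^{\gamma}, c)$ and $e = [b^{\gamma}, c]$, so that $\beta b \cdot \gamma c = \mu e$, the right-hand side becomes $\alpha a \cdot \mu e = (\alpha \cdot {}^{a}\mu \cdot (a^{\mu}, e))\,[a^{\mu}, e]$. It then remains to match the two $M$-components and the two $H$-components.

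For the $M$-components I would prove $[d^{\gamma}, c] = [a^{\mu}, e]$. Applying (A2) and P2(i) gives $d^{\gamma} = [a^{\beta}, b]^{\gamma} = [a^{\beta \cdot {}^{b}\gamma}, b^{\gamma}]$, and then (A4) yields $[[a^{\beta \cdot {}^{b}\gamma}, b^{\gamma}], c] = [a^{\beta \cdot {}^{b}\gamma \cdot (b^{\gamma}, c)}, [b^{\gamma}, c]] = [a^{\mu}, e]$, as required.

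For the $H$-components I would cancel the common left factor $\alpha$ and then expand ${}^{a}\mu = {}^{a}(\beta \cdot {}^{b}\gamma \cdot (b^{\gamma}, c))$ by two applications of (A1), obtaining ${}^{a}\mu = {}^{a}\beta \cdot {}^{a^{\beta}}({}^{b}\gamma) \cdot {}^{a^{\beta \cdot {}^{b}\gamma}}(b^{\gamma}, c)$; the leading factor ${}^{a}\beta$ then cancels against the corresponding factor of $\delta$ on the left-hand side. The surviving right-hand product contains the subproduct ${}^{a^{\beta \cdot {}^{b}\gamma}}(b^{\gamma}, c) \cdot ((a^{\beta \cdot {}^{b}\gamma})^{(b^{\gamma}, c)}, [b^{\gamma}, c])$, which is exactly the right-hand side of (A5) under $a \to a^{\beta \cdot {}^{b}\gamma}$, $b \to b^{\gamma}$, $c \to c$; applying (A5) rewrites it as $(a^{\beta \cdot {}^{b}\gamma}, b^{\gamma}) \cdot ([a^{\beta \cdot {}^{b}\gamma}, b^{\gamma}], c)$. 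Using (A2) and P2(i) as above to identify $[a^{\beta}, b]^{\gamma} = [a^{\beta \cdot {}^{b}\gamma}, b^{\gamma}]$, the trailing $\Lambda$-factor on the right then agrees with the trailing factor $(d^{\gamma}, c)$ on the left and cancels. What remains is precisely $(a^{\beta}, b) \cdot {}^{[a^{\beta}, b]}\gamma = {}^{a^{\beta}}({}^{b}\gamma) \cdot (a^{\beta \cdot {}^{b}\gamma}, b^{\gamma})$, which is exactly (A3) under $a \to a^{\beta}$, $b \to b$, $\alpha \to \gamma$ (once more invoking $(a^{\beta})^{{}^{b}\gamma} = a^{\beta \cdot {}^{b}\gamma}$). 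This closes the computation.

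The routine but delicate part is the bookkeeping: keeping the nested exponents such as $a^{\beta \cdot {}^{b}\gamma}$ straight through repeated use of P2(i), and recognizing that the $\Lambda$-valued factors line up so that (A5) and (A3) can be invoked in exactly the right positions. No single step is deep, and indeed each of (A1)--(A5) is used essentially once, reflecting that these identities were originally extracted from the associativity law of the standard construction; the care needed is that every cancellation be performed in the group $H$ as a genuine common factor on the correct (left or right) side, in the correct order.
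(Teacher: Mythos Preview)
Your proof is correct and follows essentially the same approach as the paper: both expand the two sides via (F2), match the $M$-components using (A2), P2(i), and (A4), and match the $H$-components using (A1), (A3), (A5), and (A2). The only cosmetic difference is that the paper transforms the right-hand $H$-component into the left-hand one in a single chain (inserting the factor $((a^\beta)^{{}^b\gamma},b^\gamma)^{-1}$ and then collapsing via (A5)), whereas you cancel common left and right factors step by step; the identities invoked and their substitutions are identical.
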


\begin{proof}
According to (F2) we have
$$
(\alpha a \cdot \beta b) \cdot \gamma c =
((\alpha \cdot {^a}\beta \cdot (a^\beta, b))\cdot {^{[a^\beta, b]}}\gamma \cdot ([a^\beta, b]^\gamma, \, c))
[[a^\beta, b]^\gamma, \, c]
$$
and
$$
\alpha a \cdot (\beta b \cdot \gamma c) =
(\alpha \cdot {^a}(\beta \cdot {^b}\gamma \cdot (b^\gamma, c)) \cdot
(a^{\beta \cdot {^b}\gamma \cdot (b^\gamma, c)}, \, [b^\gamma, c])
[a^{\beta \cdot {^b}\gamma \cdot (b^\gamma, c)}, \, [b^\gamma, c]].
$$
Here
$$
(\alpha \cdot {^a}(\beta \cdot {^b}\gamma \cdot (b^\gamma, c)) \cdot (a^{\beta \cdot {^b}\gamma \cdot (b^\gamma, c)}, \, [b^\gamma, c]) =
$$
$$
\alpha \cdot {^a}\beta \cdot {^{a^\beta}}({^b}\gamma) \cdot {^{a^{\beta \cdot {^b}\gamma}}}(b^\gamma, c) \cdot
(a^{\beta \cdot {^b}\gamma \cdot (b^\gamma, c)}, \, [b^\gamma, c]) =
$$
$$
\alpha \cdot {^a}\beta \cdot (a^\beta, \, b) \cdot {^{[a^\beta, \, b]}}\gamma
\cdot ((a^\beta)^{{^b}\gamma}, \, b^\gamma)^{-1}
 \cdot {^{a^{\beta \cdot {^b}\gamma}}}(b^\gamma, c) \cdot
(a^{\beta \cdot {^b}\gamma \cdot (b^\gamma, c)}, \, [b^\gamma, c]) =
$$
$$
\alpha \cdot {^a}\beta \cdot (a^\beta, \, b) \cdot {^{[a^\beta, \, b]}}\gamma
\cdot ([a^{\beta \cdot {^b}\gamma}, \, b^\gamma], \, c) =
$$
$$
((\alpha \cdot {^a}\beta \cdot (a^\beta, b))\cdot {^{[a^\beta, b]}}\gamma \cdot ([a^\beta, b]^\gamma, \, c))
$$
according to (A1), (A3), (A5), (A2), and
$$
[[a^\beta, b]^\gamma, \, c] =
[a^{\beta \cdot {^b}\gamma \cdot (b^\gamma, c)}, \, [b^\gamma, c]].
$$
according to (A2), (A4).
\end{proof}

\begin{lemma}
\label{lm3}
 The binary operation $(F2)$ has a left neutral  element $\theta o$.
 \end{lemma}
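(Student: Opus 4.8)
The plan is to check directly that $\theta o$ is a left neutral element, i.e.\ that $\theta o \cdot \alpha a = \alpha a$ for every $\alpha a \in \overline G$. The motivation is transparent: in the standard construction the neutral element of $G$ decomposes as $e = \theta \cdot o \in H \odot M$, so the element $\theta o$ of $\overline G = H \times M$ is exactly the one that ought to play the role of the identity once (F2) reconstructs the multiplication of $G$. Accordingly, the whole proof is a single substitution into (F2) followed by simplification via the auxiliary identities of Proposition \ref{pr2}.

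First I would apply (F2) with first factor $\theta o$ and second factor $\alpha a$ (so that $\theta, o, \alpha, a$ play the roles of $\alpha, a, \beta, b$ in the formula). This gives
\[
\theta o \cdot \alpha a = \left(\theta \cdot {^o}\alpha \cdot (o^\alpha, a)\right)[o^\alpha, a].
\]
For the $M$-component I would use (A7) to replace $o^\alpha$ by $o$ and then P1(ii) to obtain $[o^\alpha, a] = [o, a] = a$. For the $H$-component I would again use (A7), together with (A8) in the form ${^o}\alpha = \theta^{-1}\cdot\alpha\cdot\theta$ and (A9) in the form $(o, a) = \theta^{-1}$; the factor then collapses as $\theta \cdot (\theta^{-1}\cdot\alpha\cdot\theta) \cdot \theta^{-1} = \alpha$, since the occurrences of $\theta^{\pm 1}$ cancel in pairs. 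Combining the two components yields $\theta o \cdot \alpha a = \alpha a$, which is the claim.

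This computation is entirely mechanical, and there is no genuine obstacle: all of the substance has already been placed into the verification of the identities (A6)–(A11) in Proposition \ref{pr2}, which were precisely engineered to hold for an arbitrary hypergroup over the group (not merely for those arising from the standard construction), so the present lemma is a formal consequence. The only point demanding minor care is bookkeeping — matching each auxiliary identity to the correct factor of (F2), and reading (A8) as ${^o}\alpha = \theta^{-1}\cdot\alpha\cdot\theta$ so that the $\theta$-factors cancel.
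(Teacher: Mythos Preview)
Your proof is correct and follows exactly the same route as the paper: apply (F2) to $\theta o \cdot \alpha a$ and simplify using (A7), (A8), (A9), and P1(ii). The paper compresses the computation into a single displayed equality, but the justifications invoked are identical to yours.
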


 \begin{proof}
  For any element $\alpha a \in {\overline G}$ we have
 $$
 \theta o \cdot \alpha a = (\theta \cdot {^o}\alpha \cdot (o^\alpha, \, a))[o^\alpha, \, a] = \alpha a.
 $$
 according to (F2), (A8), (A7), (A9), P1(ii).
 \end{proof}

 We denote the unique solution of an equation $[x, a] = o$ by $a^{[-1]}$
 and call a {\it left inverse element to $a$ in the quasigroup $(M, \Xi)$}.

\begin{lemma}
 \label{lm4}
 Any element $\alpha a$ of $\overline G$ has a unique left inverse element
 $\xi x \in {\overline G}$, defined by the relation $\xi x \cdot \alpha a = \theta o$.
 Namely,
 $$
 \hspace{20mm}
 x = (a^{[-1]})^{\alpha^{-1}}, \quad
 \xi = \theta \cdot (a^{[-1]}, \, a)^{-1} \cdot ({^x}\alpha)^{-1}.
 \hspace{10mm}  (F3)
 $$
 \end{lemma}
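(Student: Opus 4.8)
The plan is to compute $\xi x \cdot \alpha a$ directly from the defining formula (F2) and then to read off the two components. Applying (F2) with first factor $\xi x$ and second factor $\alpha a$ (so $\alpha \gives \xi$, $a \gives x$, $\beta \gives \alpha$, $b \gives a$) gives
$$
\xi x \cdot \alpha a = (\xi \cdot {}^x\alpha \cdot (x^\alpha, a))\,[x^\alpha, a].
$$
Since $\overline G = H \times M$ is a genuine Cartesian product and every element is written uniquely as $\eta y$ with $\eta \in H$, $y \in M$, the requirement $\xi x \cdot \alpha a = \theta o$ (where $\theta o$ is the left neutral element furnished by Lemma \ref{lm3}) is equivalent to the pair of independent component equations
$$
[x^\alpha, a] = o, \qquad \xi \cdot {}^x\alpha \cdot (x^\alpha, a) = \theta.
$$

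First I would solve the $M$-component. By P1(i) the operation $\Xi$ is a right quasigroup, so the equation $[y, a] = o$ has the unique solution $y = a^{[-1]}$; hence the first equation forces $x^\alpha = a^{[-1]}$, determining $x^\alpha$ uniquely. Applying the action $\Phi$ and using P2, namely $(x^\alpha)^{\alpha^{-1}} = x^{\alpha \cdot \alpha^{-1}} = x^\varepsilon = x$, I recover $x = (a^{[-1]})^{\alpha^{-1}}$, again uniquely. This is exactly the value of $x$ asserted in (F3).

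With $x^\alpha = a^{[-1]}$ in hand I then turn to the $H$-component, where $(x^\alpha, a) = (a^{[-1]}, a)$, so the second equation reads $\xi \cdot {}^x\alpha \cdot (a^{[-1]}, a) = \theta$; since $H$ is a group, this has the unique solution $\xi = \theta \cdot (a^{[-1]}, a)^{-1} \cdot ({}^x\alpha)^{-1}$, matching (F3). The two derivations together give both directions at once: each step is an equivalence, so the displayed pair $\xi x$ is forced (uniqueness) and, conversely, substituting it back satisfies $\xi x \cdot \alpha a = \theta o$ (existence), because the right-quasigroup solution in $M$ and the scalar solution in $H$ both exist. I do not anticipate a genuine obstacle here; the only point worth flagging is that, because $\overline G$ is a literal Cartesian product rather than an exact product requiring a uniqueness-of-decomposition argument, the equation splits cleanly into its $M$- and $H$-parts, after which P1(i) disposes of the former and the group structure of $H$ of the latter.
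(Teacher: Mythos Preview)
Your proof is correct and follows essentially the same approach as the paper: apply (F2), split the equation $\xi x \cdot \alpha a = \theta o$ into its $H$- and $M$-components, and solve each uniquely. The paper's own argument is terser---it writes down the two component equations and simply asserts ``this system has a unique solution (F3)''---whereas you spell out the use of P1(i) and P2 explicitly, but the underlying reasoning is identical.
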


 \begin{proof}
 By (F2), $\xi x \cdot \alpha a = \theta o$ if and only if
 $$
 \xi \cdot {^x}\alpha \cdot (x^\alpha, \, a) = \theta, \quad
 [x^\alpha, \, a] = o.
 $$
 This system has a unique solution $(F3)$.
  \end{proof}

Combining the results of Lemmas $\ref{lm2}$, $\ref{lm3}$ and $\ref{lm4}$,
we obtain the following theorem.

\begin{theorem}
\label{th4}
For any hypergroup $M_H$ over the group,
the set ${\overline G} = H \times M$ together with the binary operation
defined in $(F2)$ is a group.
\end{theorem}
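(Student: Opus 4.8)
The plan is to deduce the group axioms from the three preceding lemmas together with the classical one-sided characterization of a group. Lemma \ref{lm2} already gives associativity of (F2), Lemma \ref{lm3} supplies a left neutral element $\theta o$, and Lemma \ref{lm4} provides, for each element, a left inverse relative to $\theta o$. The only remaining point is the elementary algebraic fact that an associative operation possessing a left identity and left inverses is automatically a group; once this is granted, the theorem follows immediately by combining the lemmas.

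First I would promote the left inverses to two-sided inverses. Given $\alpha a \in \overline{G}$, let $\xi x$ be its left inverse (so $\xi x \cdot \alpha a = \theta o$) and let $\eta y$ be a left inverse of $\xi x$, which exists by Lemma \ref{lm4}. Since associativity is available, I may drop parentheses and compute
$$
\alpha a \cdot \xi x = \theta o \cdot \alpha a \cdot \xi x = \eta y \cdot \xi x \cdot \alpha a \cdot \xi x = \eta y \cdot \theta o \cdot \xi x = \eta y \cdot \xi x = \theta o,
$$
using the left-identity property twice together with the defining relations $\theta o = \eta y \cdot \xi x$ and $\xi x \cdot \alpha a = \theta o$. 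Hence $\xi x$ is also a right inverse. Next I would show that $\theta o$ is a right identity:
$$
\alpha a \cdot \theta o = \alpha a \cdot \xi x \cdot \alpha a = \theta o \cdot \alpha a = \alpha a,
$$
where the middle step substitutes $\theta o = \xi x \cdot \alpha a$ and the right-inverse relation just established. Thus $\theta o$ is a two-sided neutral element and each element has a two-sided inverse, so $(\overline{G}, \cdot)$ is a group.

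I do not expect any genuine obstacle here: all the substantial identities (A1)--(A5) were already absorbed into the proof of associativity in Lemma \ref{lm2}, and Lemma \ref{lm4} even exhibits the inverse explicitly via (F3). The only conceptual subtlety is that Lemmas \ref{lm3} and \ref{lm4} are stated one-sidedly, so the short formal argument above is needed to upgrade them to the two-sided axioms of a group. Everything else is a direct appeal to the three lemmas.
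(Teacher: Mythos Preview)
Your proposal is correct and follows essentially the same route as the paper: combine Lemmas~\ref{lm2}, \ref{lm3}, \ref{lm4} and then invoke a one-sided characterization of a group. The paper simply cites the fact (phrased as ``every associative right quasigroup with a left neutral element is a group''), whereas you spell out explicitly the standard promotion from left identity and left inverses to two-sided ones; this is exactly the missing detail behind the paper's one-line appeal, so the two arguments coincide.
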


\begin{proof}
It suffices to use that every associative right quasigroup with a left neutral element is a group.
\end{proof}

\begin{definition}
Let $M_H$ be an arbitrary hypergroup over the group.
The group with a basic set $H \times M$ and a multiplication operation $(F2)$
is called an {\it (outer) exact product, associated with the hypergroup $M_H$}
and is  denoted by $H \odot M$.
\end{definition}

\begin{example}
Let $M_H$ be a hypergroup over the group with trivial
structural mappings $\Phi, \Psi, \Lambda$:
$$
a^\alpha = a, \quad {^a}\alpha = \alpha, \quad  (a, b) = \varepsilon
$$
for any $\alpha \in H, \, a, b \in M$.
Then $(M, \, \Xi)$ is a group and the outer exact product associated with the
hypergroup $M_H$ coincides with the direct product of groups $H$ and $(M, \, \Xi)$.
\end{example}

\begin{example}
Suppose that only $\Phi$ and $\Lambda$ are trivial:
$a^\alpha = a, \, (a, b) = \varepsilon$ for a hypergroup $M_H$.
Then again $(M, \, \Xi)$ is a group and the outer exact product associated with the
hypergroup $M_H$ is a semidirect product of $H$ by $(M, \, \Xi)$
(see for example $\cite{R}$, p. 167).
\end{example}

\begin{example}
If we have only one condition $(a, \, b) = \varepsilon$ for $M_H$,
then  $(M, \, \Xi)$ is a group as well, and the corresponding exact product
is a general product of $H$ and $(M,\, \Xi)$ in the sense of B. Neumann
$\cite{N}$ (see also $\cite{K}$, p. 485).
\end{example}

Thus, the notion of an (outer) exact product, associated with a hypergroup over
the group, generalizes the notions of a direct product, of a semidirect product,
of a general product of groups.

Using the construction of the outer exact product, associated with a hypergroup over the group,
we show that the standard construction of a hypergroup over the group is universal
in the following sense.

\begin{theorem}
\label{th5}
Any hypergroup over the group is isomorphic to a hypergroup over the group,
arising by the standard construction.
More precisely, let $M_H$ be an arbitrary hypergroup over the group and
${\overline G} = H \odot M$.
Consider the maps
$$
f_0: H \rightarrow {\overline G}, \quad \alpha \mapsto {\overline \alpha} = (\alpha \cdot \theta) o,
\quad \quad
f_1: M \rightarrow {\overline G}, \quad a \mapsto {\overline a} = \varepsilon a,
$$
and their images  ${\overline H} = {\rm im} \, f_0$, ${\overline M} = {\rm im} \, f_1$.
Then $\overline H$ is a subgroup of $\overline G$, $\overline M$ is a complementary set
to $\overline H$ and we have a hypergroup over the group ${\overline M}_{\overline H}$
with a system of structural mappings
 ${\overline \Omega} = ({\overline \Phi}, {\overline \Psi}, {\overline \Xi}, {\overline \Lambda})$.
This hypergroup over the group is isomorphic to $M_H$.
Such an isomorphism is established by a pair
${\overline f} = ({\overline f}_0, {\overline f}_1)$, consisting of corestrictions of $f_0$ and $f_1$:
$$
{\overline f}_0: H \rightarrow {\overline H}, \quad {\overline f}_0 (\alpha) = f_0 (\alpha), \quad
{\overline f}_1: M \rightarrow {\overline M}, \quad {\overline f}_1 (a) = f_1 (a).
$$
\end{theorem}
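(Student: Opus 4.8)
The plan is to verify in turn the four assertions of the statement: that $\overline H$ is a subgroup of $\overline G$, that $\overline M$ is a complementary set to it, that the standard construction applied to the triple $(\overline G, \overline H, \overline M)$ yields the hypergroup $\overline M_{\overline H}$, and finally that $\overline f$ is an isomorphism $M_H \to \overline M_{\overline H}$. Throughout I would work inside the group $\overline G = H \odot M$ of Theorem \ref{th4}, whose underlying set is $H \times M$, whose multiplication is (F2), and whose neutral element is $\theta o$ by Lemma \ref{lm3}. Every computation rests on (F2) together with the identities (A1)--(A11) of Proposition \ref{pr2} and the conditions P1(ii), P2(ii).

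First I would show that $f_0$ is an injective group homomorphism, so that $\overline H = \mathrm{im}\, f_0$ is a subgroup isomorphic to $H$. The key step is the single computation $\overline\alpha \cdot \overline\beta = \overline{\alpha\beta}$: expanding $(\alpha\theta)o \cdot (\beta\theta)o$ by (F2) and simplifying the occurrences of $o^{\gamma}$, ${}^o\gamma$, $[o,o]$ and $(o,o)$ by means of (A7), (A8), (A10) and the defining relation $(o,o) = \theta^{-1}$ collapses the product to $(\alpha\beta\,\theta)o$. The twist by $\theta$ in the definition of $f_0$ is exactly what is needed here: it sends $\varepsilon$ to the neutral element $\theta o$ and absorbs the discrepancy caused by $(o,o) \neq \varepsilon$. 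Injectivity is immediate from the normal form $(\alpha\theta)o$, and $\overline H$ is then a subgroup as the image of a group homomorphism.

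Next I would establish that $\overline M = \mathrm{im}\, f_1$ is complementary to $\overline H$. The decisive identity is $\overline\alpha \cdot \overline a = \alpha a$, obtained by expanding $(\alpha\theta)o \cdot \varepsilon a$ via (F2) and using (A6), P2(ii), (A9) and P1(ii) to reduce $(o,a)$ to $\theta^{-1}$ and $[o,a]$ to $a$. Since every element $\gamma c$ of $\overline G = H\times M$ is thereby written uniquely as $\overline\gamma \cdot \overline c$ with $\overline\gamma \in \overline H$ and $\overline c \in \overline M$, we obtain $\overline G = \overline H \odot \overline M$. Consequently the standard construction of Theorem \ref{th3} applies and produces the hypergroup $\overline M_{\overline H}$ with a system of structural maps $\overline\Omega$.

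It remains to check that $\overline f = (\overline f_0, \overline f_1)$ intertwines the two systems of structural maps, and this is where the main work lies. Since $\overline f_0$ is already an isomorphism of groups and $\overline f_1$ is visibly a bijection of sets, I would read off $\overline\Omega$ directly from the defining decompositions (F1) of the standard construction by computing two products in $\overline G$. The product $\overline a \cdot \overline b = \varepsilon a \cdot \varepsilon b$ reduces by (F2), (A6) and P2(ii) to $(a,b)[a,b]$, whose $\overline H$- and $\overline M$-parts are $\overline{(a,b)}$ and $\overline{[a,b]}$; comparing with $\overline a\cdot\overline b = (\overline a,\overline b)[\overline a,\overline b]$ yields at once the commutativity of the $\Lambda$- and $\Xi$-squares. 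The subtler product is $\overline a \cdot \overline\alpha = \varepsilon a \cdot (\alpha\theta)o$; here (F2), (A10), (A11) and two applications of (A1) must be combined with the cancellation ${}^{a^\alpha}\theta \cdot {}^{(a^\alpha)^\theta}(\theta^{-1}) = \varepsilon$ (itself a consequence of (A1) and (A6)) in order to bring the product into the form $({}^a\alpha)\,a^\alpha$. Its decomposition gives $\overline H$-part $\overline{{}^a\alpha}$ and $\overline M$-part $\overline{a^\alpha}$, which upon comparison with $\overline a\cdot\overline\alpha = {}^{\overline a}\overline\alpha\cdot(\overline a)^{\overline\alpha}$ establishes the commutativity of the $\Psi$- and $\Phi$-squares. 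I expect this last computation to be the principal obstacle, since it is the only place where the full strength of the compatibility identities is exercised; once it is carried out, all four diagrams commute and $\overline f$ is the desired isomorphism.
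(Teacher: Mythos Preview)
Your proposal is correct and follows essentially the same route as the paper: the paper also first checks that $f_0$ is a monomorphism via the product $(\alpha\theta)o\cdot(\beta\theta)o$, then proves the identity $\overline\xi\cdot\overline x=\xi x$ (its Lemma~\ref{lm5}) to obtain complementarity, and finally computes $\overline a\cdot\overline\alpha$ and $\overline a\cdot\overline b$ (Lemmas~\ref{lm6} and~\ref{lm7}) exactly as you describe, invoking the same identities (A1), (A6)--(A11) and P1(ii), P2(ii). Your identification of the $\overline a\cdot\overline\alpha$ computation as the principal obstacle matches the paper, and the cancellation you single out is precisely the step the paper handles via (A1) and (A6).
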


\begin{proof}
It is evident that $f_0$ and $f_1$ are injective maps and $f_0$ is a  homomorphism, since
$$
(\alpha \cdot \theta) o \cdot (\beta \cdot \theta) o =
(\alpha \cdot \theta \cdot {^o}(\beta \cdot \theta) \cdot (o^{\beta \cdot \theta}, \, o)) [o^{\beta \cdot \theta}, \, o] =
((\alpha \cdot \beta) \cdot \theta) o
$$
according to (F2), (A8), (A7), P1(ii).
Thus, ${\overline f}_0$ is an isomorphism of groups and ${\overline f}_1$ is a bijection of sets.

\begin{lemma}
\label{lm5}
For any elements $\xi \in H, \, x \in M$
$$
{\overline \xi} \cdot {\overline x} = (\xi \cdot \theta) o \cdot \varepsilon x =  \xi x.
$$
\end{lemma}

\begin{proof}
We have
$$
(\xi \cdot \theta) o \cdot \varepsilon x =
(\xi \cdot \theta \cdot {^o}\varepsilon \cdot (o^\varepsilon, x)) [o^\varepsilon, x] = \xi x.
$$
according to (F2), (A6), (A7), (A9) and P1(ii).
\end{proof}

\begin{corollary}
The set $\overline M$ is a complementary set to the subgroup $\overline H$, i. e.
for any $\alpha a \in {\overline G}$ there exist unique
${\overline \xi} \in {\overline H}$ and ${\overline x} \in {\overline M}$ such that
$ \alpha a = \overline \xi \cdot {\overline x}$.
\end{corollary}

\begin{proof}
The only values satisfying this relation are $\xi = \alpha, x = a.$
\end{proof}

To complete the proof of Theorem $\ref{th5}$, it remains to verify that
 the pair $f = (f_0, \, f_1)$ preserves all structural mappings, that is
$$
\hspace{13mm}
{{\overline a}}^{\overline \alpha} = \overline{a^\alpha}, \quad
{^{\overline a}} {\overline \alpha} = \overline{{^a}\alpha}, \quad
[{\overline a}, \, {\overline b}] = \overline{[a, \, b]}, \quad
({\overline a}, \, {\overline b}) = \overline{(a, \, b)}
\hspace{8mm} (F4)
$$
for any $\alpha \in H, \, a,b \in M$.

\begin{lemma}
\label{lm6}
For arbitrary elements $\alpha \in H, \, a \in M$
$$
{\overline a} \cdot {\overline \alpha} = (\varepsilon a) \cdot ((\alpha \cdot \theta) o) =
{^a}\alpha \, a^\alpha.
$$
\end{lemma}

\begin{proof}
Indeed, we have
$$
(\varepsilon a) \cdot ((\alpha \cdot \theta) o) =
(\varepsilon \cdot {^a}(\alpha \cdot \theta) \cdot (a^{\alpha \cdot \theta}, \, o)) [a^{\alpha \cdot \theta}, \, o] =
$$
$$
= ({^a}\alpha \cdot {^{a^\alpha}}\theta \cdot {^{a^{\alpha \cdot \theta}}}(\theta^{-1})) (a^{\alpha \cdot \theta})^{\theta^{-1}}  =
({^a}\alpha \cdot {^{a^\alpha}}(\theta \cdot \theta^{-1})) a^\alpha  = ({^a}\alpha \cdot \varepsilon) a^\alpha  =
{^a}\alpha \, a^\alpha
$$
according to (F2), (A1), (A11), (A10), P2(i), (A6).
\end{proof}

Using Lemmas $\ref{lm5}$ and $\ref{lm6}$, we get
$$
{\overline a} \cdot {\overline \alpha} =
{^a}\alpha \, a^\alpha = \overline{{^a}\alpha} \cdot \overline{a^\alpha}.
$$
On the other hand, according to (F1)
$$
{\overline a} \cdot {\overline \alpha} = {^{\overline a}}{\overline \alpha} \cdot {\overline a}^{\overline \alpha}.
$$
Hence, in view of the uniqueness of the corresponding decomposition in the exact product
${\overline G} = {\overline H} \odot {\overline M}$, we get
$$
{{\overline a}}^{\overline \alpha} = \overline{a^\alpha}, \quad
{^{\overline a}} {\overline \alpha} = \overline{{^a}\alpha}.
$$

Two last relations of (F4) are deduced in a similar manner.

\begin{lemma}
\label{lm7}
For any $a, b \in M$
$$
\overline{a} \cdot \overline{b} = (a, \, b) [a, \, b] =
\overline{(a, \, b)} \cdot \overline{[a, \, b]}.
$$
\end{lemma}

\begin{proof}
We have
$$
\overline{a} \cdot \overline{b} = \varepsilon a \cdot \varepsilon b =
(\varepsilon \cdot {^a}\varepsilon \cdot (a^\varepsilon, \, b)) [a^\varepsilon, \, b] =
(\varepsilon \cdot (a, \, b)) [a, \, b] =
(a, \, b) [a, \, b],
$$
according to (F2), (A7), P2(ii) and
$$
\overline{(a, \, b)} \cdot \overline{[a, \, b]} =
((a, b) \cdot \theta)o \cdot \varepsilon [a, b] =
((a, b) \cdot \theta) \cdot {^o}\varepsilon \cdot (o^\varepsilon, [a, b])[o^\varepsilon, [a, b]] =
$$
$$
= ((a, b) \cdot \theta) \cdot \varepsilon \cdot (o, [a, b])[o, [a, b]] =
(a, b) [a, b]
$$
according to (F2), (A6), (A7), (A9), P1(ii).
\end{proof}

By (F1)we have also
$$
\overline{a} \cdot \overline{b} = (\overline{a}, \, \overline{b}) \cdot [\overline{a} \cdot \overline{b}].
$$
Therefore
$$
[{\overline a}, \, {\overline b}] = \overline{[a, \, b]}, \quad
({\overline a}, \, {\overline b}) = \overline{(a, \, b)}.
$$
Theorem $\ref{th5}$ is proved.
\end{proof}


\section{Group actions}
\label{AG}

Let $M_H$ be an arbitrary hypergroup over the group and
$\Omega = (\Phi, \Psi, \Xi, \Lambda)$ be its system of structural mappings.
Here we use the results and denotations of Theorem $\ref{th5}$.

Consider an arbitrary map
$$
\kappa: M  \rightarrow H, \,\, a \mapsto \kappa_a
$$
and the set
$$
{\overline \kappa}{\overline M} = \{ {\overline \kappa_a} \cdot {\overline a} = \kappa_a a, \,\, a \in M \}.
$$
Obviously, this set is a subset of $\overline G$ complementary to the subgroup $\overline H$.
Therefore, in a standard way, we obtain a hypergroup over the group ${\overline \kappa}{\overline M}_{\overline H}$.
We denote its system of structural mappings by
${\overline \Omega}_\kappa = ({\overline \Phi}_\kappa, {\overline \Psi}_\kappa, {\overline \Xi}_\kappa, {\overline \Lambda}_\kappa)$
and use the following short notations:
$$
{\overline \Phi}_\kappa (\kappa_a a, {\overline \alpha}) =
(\kappa_a a)^{\overline \alpha}, \quad
{\overline \Psi}_\kappa (\kappa_a a, {\overline \alpha}) =
{^{\kappa_a a}}{\overline \alpha},
$$
$$
{{\overline \Xi}}_\kappa (\kappa_a a, \kappa_b b) =
[\kappa_a a, \kappa_b b], \quad
{{\overline \Lambda}}_\kappa (\kappa_a a, \kappa_b b) =
(\kappa_a a, \kappa_b b).
$$

Applying (F1), Lemma $\ref{lm5}$, homomorphity of $f_0$, we get
\begin{center}
${^{\kappa_a a}}{\overline \alpha} \cdot (\kappa_a a)^{\overline \alpha} =
(\kappa_a a) \cdot{\overline \alpha} =
{\overline \kappa_a} \cdot {\overline a} \cdot{\overline \alpha} =
{\overline \kappa_a} \cdot {^{\overline a}}{\overline \alpha}
\cdot {\overline a}^{\overline \alpha} =$
\end{center}
\begin{center}
$= {\overline \kappa_a} \cdot \overline{{^a}\alpha} \cdot \overline{\kappa_{a^\alpha}}^{-1}
\cdot \overline{\kappa_{a^\alpha}} \cdot \overline{a^\alpha} =
\overline{\kappa_a \cdot {^a}\alpha \cdot \kappa_{a^\alpha}^{-1}}
\cdot \kappa_{a^\alpha} a^\alpha$.
\end{center}

Similarly
$$
(\kappa_a a, \kappa_b b) \cdot [\kappa_a a, \kappa_b b] = \kappa_a a \cdot \kappa_b b =
(\kappa_a \cdot {^a}(\kappa_b) \cdot (a^{\kappa_b}, b)) [a^{\kappa_b}, b] =
$$
$$
= \overline{\kappa_a \cdot {^a}(\kappa_b) \cdot (a^{\kappa_b}, b)} \cdot \overline{[a^{\kappa_b}, b]} =
$$
$$
= \overline{\kappa_a \cdot {^a}(\kappa_b) \cdot (a^{\kappa_b}, b)} \cdot \overline{\kappa_{[a^{\kappa_b}, b]}}^{-1}
\cdot \overline{\kappa_{[a^{\kappa_b}, b]}} \cdot \overline{[a^{\kappa_b}, b]} =
$$
$$
= \overline{\kappa_a \cdot {^a}(\kappa_b) \cdot (a^{\kappa_b}, b) \cdot \kappa_{[a^{\kappa_b}, b]}^{-1}}
\cdot \kappa_{[a^{\kappa_b}, b]} [a^{\kappa_b}, b].
$$
Hence, the following lemma is proved.

\begin{lemma}
\label{lm8a}
The systems of structural mappings
${\overline \Omega} = ({\overline \Phi}, {\overline \Psi}, {\overline \Xi}, {\overline \Lambda})$ and
${\overline \Omega}_\kappa = ({\overline \Phi}_\kappa, {\overline \Psi}_\kappa, {\overline \Xi}_\kappa, {\overline \Lambda}_\kappa)$
of hypergroups over the groups
${\overline M}_{\overline H}$ and ${\overline \kappa}{\overline M}_{\overline H}$, respectively,
are connected by formulas:
\begin{itemize}
\item[(i)] $\quad (\kappa_a a)^{\overline \alpha} = \kappa_{a^\alpha} a^\alpha$,
\item[(ii)] $\quad {^{\kappa_a a}}{\overline \alpha}  =
\overline{\kappa_a \cdot {^a}\alpha \cdot \kappa_{a^\alpha}^{-1}}$,
\item[(ii)] $\quad  [\kappa_a a, \kappa_b b] = \kappa_{[a^{\kappa_b}, b]} [a^{\kappa_b}, b]$,
\item[(iv)] $\quad  (\kappa_a a, \kappa_b b) =
\overline{\kappa_a \cdot {^a}(\kappa_b) \cdot (a^{\kappa_b}, b) \cdot \kappa_{[a^{\kappa_b}, b]}^{-1}}$.
\end{itemize}
\end{lemma}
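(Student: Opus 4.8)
The plan is to read off all four formulas from a single principle: the structural maps $\overline{\Omega}_\kappa$ are, by the standard construction (Theorem \ref{th3}) applied to the complementary set $\overline{\kappa}\overline{M}$, nothing but the factors in the decomposition of the products $\kappa_a a\cdot\overline{\alpha}$ and $\kappa_a a\cdot\kappa_b b$ relative to the exact product $\overline{G}=\overline{H}\odot\overline{\kappa}\overline{M}$. So it suffices to compute each such product once, by hand inside $\overline{G}$, rewrite the answer as an element of $\overline{H}$ times an element of $\overline{\kappa}\overline{M}$, and then invoke uniqueness of that decomposition.

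First I would record the defining relations. Since $\kappa_a a=\overline{\kappa_a}\cdot\overline{a}$ (Lemma \ref{lm5}) and $\overline{H}$ is a subgroup, the set $\overline{\kappa}\overline{M}$ meets each right coset $\overline{H}\cdot\overline{a}$ in the single point $\kappa_a a$; hence it is complementary to $\overline{H}$, and formula (F1) for this transversal reads
$$
\kappa_a a\cdot\overline{\alpha}={}^{\kappa_a a}\overline{\alpha}\cdot(\kappa_a a)^{\overline{\alpha}},\qquad
\kappa_a a\cdot\kappa_b b=(\kappa_a a,\kappa_b b)\cdot[\kappa_a a,\kappa_b b],
$$
with the first factor on each right-hand side lying in $\overline{H}$ and the second in $\overline{\kappa}\overline{M}$. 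These factors are exactly $\overline{\Psi}_\kappa,\overline{\Phi}_\kappa,\overline{\Lambda}_\kappa,\overline{\Xi}_\kappa$.

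For the action formulas (i) and (ii) I would then compute the same left-hand product the long way: using Lemma \ref{lm5}, the homomorphism property of $f_0$, and the relations $\overline{a}\cdot\overline{\alpha}=\overline{{}^a\alpha}\cdot\overline{a^\alpha}$ of (F4) established in Theorem \ref{th5}, one gets $\kappa_a a\cdot\overline{\alpha}=\overline{\kappa_a}\cdot\overline{{}^a\alpha}\cdot\overline{a^\alpha}$. The key manoeuvre is to insert the identity $\overline{\kappa_{a^\alpha}}^{-1}\cdot\overline{\kappa_{a^\alpha}}=\overline{\varepsilon}$, which splits this into the $\overline{H}$-element $\overline{\kappa_a\cdot{}^a\alpha\cdot\kappa_{a^\alpha}^{-1}}$ times the $\overline{\kappa}\overline{M}$-element $\overline{\kappa_{a^\alpha}}\cdot\overline{a^\alpha}=\kappa_{a^\alpha}a^\alpha$. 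Matching against the decomposition above and using uniqueness of the exact-product factorisation yields (i) and (ii) at once. The multiplication formulas (iii) and (iv) follow the same pattern, now computing $\kappa_a a\cdot\kappa_b b$ directly from the exact-product law (F2) to obtain $\overline{\kappa_a\cdot{}^a(\kappa_b)\cdot(a^{\kappa_b},b)}\cdot\overline{[a^{\kappa_b},b]}$, then inserting $\overline{\kappa_{[a^{\kappa_b},b]}}^{-1}\cdot\overline{\kappa_{[a^{\kappa_b},b]}}$ to peel off the $\overline{H}$-part and leave $\kappa_{[a^{\kappa_b},b]}[a^{\kappa_b},b]\in\overline{\kappa}\overline{M}$.

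The step I expect to be the real (if modest) obstacle is the bookkeeping of this last regrouping: one must ensure the residual right-hand factor is genuinely of the form $\kappa_c c$ for the correct $c$ (namely $c=a^\alpha$ in the action case and $c=[a^{\kappa_b},b]$ in the product case), so that it actually lands in $\overline{\kappa}\overline{M}$ and the uniqueness argument applies. This is precisely what the insertion trick $\overline{\kappa_c}^{-1}\cdot\overline{\kappa_c}=\overline{\varepsilon}$ accomplishes, and it relies essentially on $f_0$ being a homomorphism, so that the accumulated left factors remain inside the subgroup $\overline{H}$. Everything else is a direct application of (F2) together with the identities (A1)--(A11) already available.
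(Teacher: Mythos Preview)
Your proposal is correct and follows essentially the same argument as the paper: compute $\kappa_a a\cdot\overline{\alpha}$ and $\kappa_a a\cdot\kappa_b b$ in $\overline{G}$ using Lemma~\ref{lm5}, (F4), (F2) and the homomorphism property of $f_0$, then insert $\overline{\kappa_c}^{-1}\cdot\overline{\kappa_c}$ for the appropriate $c$ to split off an $\overline{H}$-factor and a $\overline{\kappa}\overline{M}$-factor, and conclude by uniqueness of the exact-product decomposition. The paper's write-up is just the explicit version of exactly this computation.
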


Consider a pair ${\overline f}_\kappa = ({\overline f}_0, {\overline f}'_\kappa)$,
consisting of the  isomorphism of groups
${\overline f}_0: H \rightarrow {\overline H}$
 defined in Theorem $\ref{th5}$
and of the bijection of sets
$$
{\overline f}'_\kappa: M \rightarrow {\overline \kappa}{\overline M}, \,\,\,
a \mapsto \kappa_a a = {\overline \kappa} \cdot {\overline a}.
$$
By using ${\overline f}_\kappa$ and
${\overline \Omega}_\kappa = ({\overline \Phi}_\kappa, {\overline \Psi}_\kappa,
{\overline \Xi}_\kappa, {\overline \Lambda}_\kappa)$,
we define a system of mappings
$\Omega_\kappa = (\Phi_\kappa, \Psi_\kappa, \Xi_\kappa, \Lambda_\kappa)$
in the following way:
$$
\Phi_\kappa: M \times H \rightarrow M, \quad \Phi_\kappa =
{\overline f}_\kappa \cdot {\overline \Phi}_\kappa \cdot ({\overline f}'_\kappa)^{-1},
$$
$$
\Psi_\kappa: M \times H \rightarrow H, \quad \Psi_\kappa =
{\overline f}_\kappa \cdot {\overline \Psi}_\kappa \cdot ({\overline f}_0)^{-1},
$$
$$
\Xi_\kappa: M \times M \rightarrow M, \quad \Xi_\kappa =
({\overline f}'_\kappa, {\overline f}'_\kappa) \cdot {\overline \Xi}_\kappa \cdot ({\overline f}'_\kappa)^{-1},
$$
$$
\Lambda_\kappa: M \times N \rightarrow H, \quad \Lambda_\kappa =
({\overline f}'_\kappa, {\overline f}'_\kappa) \cdot {\overline \Lambda}_\kappa \cdot ({\overline f}_0)^{-1}.
$$
This means that the pair ${\overline f}_\kappa = ({\overline f}_0, {\overline f}'_\kappa)$
preserves the systems of mappings
${\overline \Omega}_\kappa = ({\overline \Phi}_\kappa, {\overline \Psi}_\kappa,
{\overline \Xi}_\kappa, {\overline \Lambda}_\kappa)$ and
$\Omega_\kappa = (\Phi_\kappa, \Psi_\kappa, \Xi_\kappa, \Lambda_\kappa)$.

Applying the definition of mappings $(\Phi_\kappa, \Psi_\kappa, \Xi_\kappa, \Lambda_\kappa)$
to the  relations (i)-(iv) we get the following formulas:
\begin{itemize}
\item[$(i)_\kappa$] $\quad \Phi_\kappa (a, \alpha) = \Phi(a, \alpha)$,
\item[$(ii)_\kappa$] $\quad \Psi_\kappa (a, \alpha) = \kappa_a \cdot \Psi(a, \alpha) \cdot \kappa_{a^\alpha}^{-1}$,
\item[$(iii)_\kappa$] $\quad \Xi_\kappa (a, b) = \Xi(\Phi(a, \kappa_b), b)$,
\item[$(iv)_\kappa$] $\quad \Lambda_\kappa (a, b)=
\kappa_a \cdot \Psi(a, \kappa_b) \cdot \Lambda(\Phi(a. \kappa_b), b) \cdot \kappa_{\Xi(\Phi(a, \kappa_b), b)}^{-1}$.
\end{itemize}
We can define the system of mappings $\Omega_\kappa = (\Phi_\kappa, \Psi_\kappa, \Xi_\kappa, \Lambda_\kappa)$
by these formulas.

\begin{proposition}
\label{prop3}
Let $M_H$ be an arbitrary hypergroup over the group and
$\Omega = (\Phi, \Psi, \Xi, \Lambda)$ be its system of structural mappings.
Let $f_0: H \rightarrow H'$ be an arbitrary epimorphism of groups,
$f_1: M \rightarrow M'$ be an arbitrary surjection of sets and
$\Omega' = (\Phi', \Psi', \Xi', \Lambda')$ be a system of mappings on the pair $(H', M')$
such that the pair $f = (f_0, f_1)$ preserves the systems of mappings $\Omega$ and $\Omega'$.
Then the system of mappings $\Omega'$ satisfies conditions P1(ii), P2, P3, P4.
Condition P1(i) holds partially: any equation $[x', a'] = b'$ in $M'$ has a root.
Condition P1(i) is fulfilled completely, if $f_1$ is a bijection.
\end{proposition}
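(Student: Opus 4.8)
The plan is to transport every property of $\Omega$ across the surjective pair $f = (f_0, f_1)$. Spelling out what ``preserving the structural mappings'' means, we have for all $a, b \in M$ and $\alpha \in H$
$$
f_1(a^\alpha) = f_1(a)^{f_0(\alpha)}, \quad
f_0({}^a\alpha) = {}^{f_1(a)}f_0(\alpha), \quad
f_1([a,b]) = [f_1(a), f_1(b)], \quad
f_0((a,b)) = (f_1(a), f_1(b)),
$$
where on the right the structural operations are those of $\Omega'$. Because $f_0$ and $f_1$ are surjective, every element of $H'$ has the form $f_0(\alpha)$ and every element of $M'$ has the form $f_1(a)$; I will systematically replace each element occurring in a claim about $\Omega'$ by such an image and then invoke the four relations above together with the fact that $f_0$ is a group homomorphism (so its value at $\varepsilon$ is the neutral element $\varepsilon'$ of $H'$, and $f_0(\alpha\beta) = f_0(\alpha)f_0(\beta)$).

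First I would dispose of P1(ii), P2 and P3. For P2 I set $a' = f_1(a)$, $\alpha' = f_0(\alpha)$, $\beta' = f_0(\beta)$; then $(a'^{\alpha'})^{\beta'} = f_1((a^\alpha)^\beta) = f_1(a^{\alpha\beta}) = a'^{\alpha'\beta'}$ by P2(i) for $\Omega$ and homomorphy of $f_0$, while $a'^{\varepsilon'} = f_1(a^\varepsilon) = a'$ gives P2(ii). For P1(ii) I take $o' := f_1(o)$; then $[o', a'] = f_1([o, a]) = f_1(a) = a'$ for every $a'$, so $o'$ is a left neutral element of $\Xi'$. For P3, surjectivity of $\Psi$ makes ${}^a\alpha$ exhaust $H$, hence $f_0({}^a\alpha) = {}^{a'}\alpha'$ exhausts $H'$, so $\Psi'$ is surjective.

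Next I would verify P4, i.e.\ the identities (A1)--(A5) for $\Omega'$. Each follows by the same mechanism: evaluate the $\Omega'$-identity at images $a' = f_1(a)$, $b' = f_1(b)$, $c' = f_1(c)$, $\alpha' = f_0(\alpha)$, $\beta' = f_0(\beta)$, rewrite every term as the $f_0$- or $f_1$-image of the corresponding $\Omega$-term by the preservation relations, and quote the matching identity of Theorem \ref{th2}, applying $f_0$ to it (being a homomorphism, it respects the products of $H$ appearing on the right-hand sides). The only point requiring care is that the compound arguments in (A2)--(A5)---expressions such as $a'^{{}^{b'}\alpha'}$ or $[a', b']^{\gamma'}$---must themselves be recognised as images; this is automatic because, e.g., ${}^{b'}\alpha' = f_0({}^b\alpha)$ is the $f_0$-image of an element of $H$ and $[a', b'] = f_1([a,b])$ is the $f_1$-image of an element of $M$, so nesting an action or an operation keeps the whole expression inside the image at every level. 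I would carry out (A1) in full and remark that (A2)--(A5) are identical in spirit, with (A3) and (A5) longest because they mix $\Lambda$ and $\Psi$.

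Finally, P1(i). For existence, given $a' = f_1(a)$ and $b' = f_1(b)$, let $x_0 \in M$ be the unique solution of $[x_0, a] = b$ furnished by P1(i) for $\Omega$; then $x' := f_1(x_0)$ satisfies $[x', a'] = f_1([x_0, a]) = f_1(b) = b'$, so a root exists. This is where surjectivity of $f_1$ is essential and injectivity is not yet needed. For uniqueness under the extra hypothesis that $f_1$ is a bijection: if $[x_1', a'] = [x_2', a'] = b'$, write $x_i' = f_1(x_i)$; then $f_1([x_1,a]) = f_1([x_2,a])$, so $[x_1, a] = [x_2, a]$ by injectivity of $f_1$, whence $x_1 = x_2$ by the quasigroup property of $\Xi$ and $x_1' = x_2'$. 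The main (and essentially only) obstacle is conceptual rather than computational: one must be sure that $\Omega'$ is pinned down on all of $M'$ and $H'$ purely through its values on images---which is exactly what surjectivity secures---and one must see why uniqueness of solutions cannot survive a non-injective $f_1$, since two preimages of a downstairs root need not have the same $\Xi$-value upstairs, allowing distinct roots $x'$ to appear below.
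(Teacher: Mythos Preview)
Your proof is correct and follows essentially the same approach as the paper: transport the identities P2(i) and P4 and the properties P1(ii), P2(ii), P3 across the surjective pair $(f_0,f_1)$ using the preservation relations, and handle P1(i) by lifting $a',b'$ to $M$, solving upstairs, and pushing the solution down (with injectivity of $f_1$ providing uniqueness). Your version is considerably more explicit than the paper's—particularly the remark about nested expressions remaining images and the spelled-out uniqueness argument—but the underlying strategy is identical.
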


\begin{proof}
Conditions P2(i), P4 for $\Omega'$ are identities in variables belonging to $M'$ and $H'$,
formed by superpositions of mappings of the system $\Omega'$.
They are the images of corresponding identities in variables belonging to $M$ and $H$
(formed by superpositions of mappings of system $\Omega$)
which are satisfied, since $M_H$ is a hypergroup over the group.
Since $f_0$ is an epimorphism, $f_1$ is a surjection and their pair $f$ preserves the system of
mappings $\Omega$ and $\Omega'$, conditions P2(ii), P4 are satisfied for the system of mappings
$\Omega'$, as well.

 Property P3 for $\Omega'$ follows from the surjectivity of mappings
 $\Psi, f_0, f$.

 The image of the neutral element $\varepsilon$ of the group $H$ under the epimorphism $f_0$
 is the neutral element $\varepsilon'$ of the group $H'$.
 Therefore condition P2(ii) is satisfied for $\Omega'$.

 Condition P1(ii) will be satisfied for $\Omega'$, since $o' = f_1(o)$ is
 the neutral element of the binary operation $\Xi'$.

 Finally, any equation $[x', a'] = b'$ in $M'$ has a root, since
 for arbitrary $a, b \in M$ such that $f_1(a) = a', \, f_1(b) = b'$
 the equation $[x, a] = b$ has a root in $M$.
 If $f_1$ is a bijection, it is clear that this root is unique.
\end{proof}

\begin{corollary}
For an arbitrary hypergroup over the group $M_H$ with the system of structural mappings
$\Omega = (\Phi, \Psi, \Xi. \Lambda)$ and for any map $\kappa: M \rightarrow H$
the system of mappings
$\Omega_\kappa = (\Phi_\kappa, \Psi_\kappa, \Xi_\kappa, \Lambda_\kappa)$,
defined by formulas $(i)_\kappa-(iv)_\kappa$ defines a structure of
a hypergroup over the group H on the basic set $M$.
\end{corollary}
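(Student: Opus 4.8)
The plan is to reduce the statement to Proposition \ref{prop3}, applied in the reverse direction, rather than to verify P1--P4 by grinding through the explicit formulas $(i)_\kappa$--$(iv)_\kappa$. First I would record that the set $\overline{\kappa}\overline{M}$ is complementary to the subgroup $\overline{H}$ of $\overline{G} = H \odot M$ (this was already noted in the running text), so by the standard construction (Theorem \ref{th3}) the object $\overline{\kappa}\overline{M}_{\overline{H}}$, equipped with $\overline{\Omega}_\kappa = (\overline{\Phi}_\kappa, \overline{\Psi}_\kappa, \overline{\Xi}_\kappa, \overline{\Lambda}_\kappa)$, is a genuine hypergroup over the group; in particular it satisfies all of P1--P4. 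This is the object I would feed into Proposition \ref{prop3} as the ``known'' hypergroup.

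Next I would take the pair $({\overline f}_0^{-1}, ({\overline f}'_\kappa)^{-1})$ going from $\overline{\kappa}\overline{M}_{\overline{H}}$ back to the pair $(H, M)$ carrying $\Omega_\kappa$. Since ${\overline f}_0$ is an isomorphism of groups and ${\overline f}'_\kappa$ is a bijection of sets (Theorem \ref{th5}), the map ${\overline f}_0^{-1}$ is in particular an epimorphism and $({\overline f}'_\kappa)^{-1}$ is in particular a surjection. By construction the forward pair ${\overline f}_\kappa = ({\overline f}_0, {\overline f}'_\kappa)$ preserves the systems $\Omega_\kappa$ and $\overline{\Omega}_\kappa$ (indeed $\Omega_\kappa$ was \emph{defined} to make this so, and the formulas $(i)_\kappa$--$(iv)_\kappa$ are the translation of that requirement); since both components of ${\overline f}_\kappa$ are bijective, the commutativity of the defining squares is equivalent to the commutativity of the reversed squares, so the inverse pair preserves $\overline{\Omega}_\kappa$ and $\Omega_\kappa$ as well. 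Thus every hypothesis of Proposition \ref{prop3} is satisfied with $\overline{\kappa}\overline{M}_{\overline{H}}$ as source and $(H, M, \Omega_\kappa)$ as target.

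Proposition \ref{prop3} then yields at once that $\Omega_\kappa$ satisfies P1(ii), P2, P3, P4, and that every equation $[x, a] = b$ in $M$ admits a root under $\Xi_\kappa$. The one remaining point is the uniqueness half of P1(i): here I would invoke the final clause of Proposition \ref{prop3}, which upgrades the partial P1(i) to the full condition exactly when the set map is a bijection --- and $({\overline f}'_\kappa)^{-1}$ is a bijection. Consequently $\Omega_\kappa$ satisfies all of P1--P4, so $(M, \Omega_\kappa)$ is a hypergroup over the group $H$, which is the assertion of the corollary.

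The only conceptual point that must be got right --- and the sole place an error could enter --- is the \emph{direction} in which Proposition \ref{prop3} is applied. One cannot use the forward pair ${\overline f}_\kappa$, because its source $(M, \Omega_\kappa)$ is precisely the object whose hypergroup structure is in question, which would be circular; the known hypergroup must be $\overline{\kappa}\overline{M}_{\overline{H}}$. The substantive content is carried entirely by two facts: that $\overline{\kappa}\overline{M}_{\overline{H}}$ is a standard-construction hypergroup, and that $({\overline f}'_\kappa)^{-1}$ is a bijection, the latter being what secures uniqueness in P1(i) rather than mere existence of roots.
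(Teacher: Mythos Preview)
Your proof is correct and takes essentially the same route as the paper: both invoke Proposition~\ref{prop3} via the bijective pair ${\overline f}_\kappa = ({\overline f}_0, {\overline f}'_\kappa)$, and both use bijectivity of the set component ${\overline f}'_\kappa$ to secure the full P1(i). If anything you are more careful than the paper, which simply asserts that ${\overline f}_\kappa$ ``satisfies conditions of the Proposition'' without explicitly passing to the inverse pair, whereas you correctly observe that the known hypergroup $\overline{\kappa}\overline{M}_{\overline{H}}$ must serve as the source in Proposition~\ref{prop3} to avoid circularity.
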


We denote this hypergroup $(M_H)_\kappa$.

\begin{proof}
The pair of maps ${\overline f}_\kappa = ({\overline f}_0, {\overline f}'_\kappa)$
satisfies conditions of the Proposition $\ref{prop3}$.
Consequently, the system $\Omega_\kappa$ satisfies the conditions P1(ii), P2, P3, P4.
Condition P1(i) is also satisfied, since ${\overline f}'_\kappa$
has an inverse map ${\overline f}'_{\kappa^{-1}}$.
Hence, the system of mappings $\Omega_\kappa$ defines a structure of
a hypergroup over the group $H$  on $M$.
\end{proof}

Let $H$ be a group, $M$ be an arbitrary set, $Hg(M, H)$
be the set of all
hypergroups $M_H$ with the basic set $M$ over the group $H$.
This means that for a given pair $(M, H)$,
$Hg(M, H)$ is the set of all (structural) mappings
$\Omega = (\Phi, \Psi, \Xi, \Lambda)$  satisfying
conditions P1 - P4.

Consider the set
$$
H^M  = \{  \kappa: M  \rightarrow H, \,\, a \mapsto \kappa_a \}
$$
with a group operation $\ast$ induced
by the group operation on $H$:
for any elements $\kappa, \lambda \in H^M$ by definition
$$
(\kappa * \lambda)_a = \kappa_a \cdot \lambda_a.
$$

For any element $\kappa \in H^M$ and
for an arbitrary map $f: M \rightarrow M$,
the composite $f \cdot \kappa$ is an element of $H^M$
such that $(f \cdot \kappa)_a = \kappa_{f(a)}$.

\begin{proposition}
\label{prop4a}
(a) The neutral element for the operation $\ast$
is the map $\varepsilon ^M$,
sending each element $a \in M$ to the neutral
element $\varepsilon$
of the group $H$.

(b) The inverse element to $\kappa \in H^M$ is an element
$\kappa^{-1}$ such that $(\kappa^{-1})_a = (\kappa_a)^{-1}$
for every $a \in M$.

(c) For any elements $\kappa, \lambda \in H^M$
and for an arbitrary map $f: M \rightarrow M$
we have the relations:

$$
(f\cdot \kappa) * (f \cdot \lambda) = f \cdot (\kappa * \lambda), \quad
f \cdot \varepsilon^M = \varepsilon^M.
$$
\end{proposition}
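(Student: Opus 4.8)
The plan is to reduce everything to pointwise identities in the group $H$, since both the operation $\ast$ and the precomposition $f \cdot \kappa$ are defined coordinatewise through the formulas $(\kappa \ast \lambda)_a = \kappa_a \cdot \lambda_a$ and $(f \cdot \kappa)_a = \kappa_{f(a)}$. Consequently, to prove that two elements of $H^M$ coincide it suffices to check that they agree at every $a \in M$, and each such check is an immediate consequence of the group axioms for $H$ (associativity, the neutral element $\varepsilon$, and inverses).

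First I would establish (a): for any $\kappa \in H^M$ and any $a \in M$ one computes $(\kappa \ast \varepsilon^M)_a = \kappa_a \cdot \varepsilon = \kappa_a$ and symmetrically $(\varepsilon^M \ast \kappa)_a = \varepsilon \cdot \kappa_a = \kappa_a$, so $\varepsilon^M$ is a two-sided neutral element for $\ast$. Next, for (b), with $\kappa^{-1}$ defined by $(\kappa^{-1})_a = (\kappa_a)^{-1}$, the coordinatewise identities $(\kappa \ast \kappa^{-1})_a = \kappa_a \cdot (\kappa_a)^{-1} = \varepsilon = (\varepsilon^M)_a$ and likewise on the other side show that $\kappa^{-1}$ is the two-sided inverse of $\kappa$. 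Together with the associativity of $\ast$, which is inherited pointwise from that of $H$, parts (a) and (b) complete the verification that $(H^M, \ast)$ is indeed a group.

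For (c) I would again argue coordinatewise. For every $a \in M$ one has
$$ ((f \cdot \kappa) \ast (f \cdot \lambda))_a = (f \cdot \kappa)_a \cdot (f \cdot \lambda)_a = \kappa_{f(a)} \cdot \lambda_{f(a)} = (\kappa \ast \lambda)_{f(a)} = (f \cdot (\kappa \ast \lambda))_a, $$
which yields the first relation, while $(f \cdot \varepsilon^M)_a = (\varepsilon^M)_{f(a)} = \varepsilon = (\varepsilon^M)_a$ yields the second.

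There is no genuine obstacle here: the statement is purely formal and every assertion collapses to a one-line computation in $H$ after evaluating at a fixed $a$. The only point requiring care is bookkeeping of the subscripts, in particular the convention $(f \cdot \kappa)_a = \kappa_{f(a)}$; under this convention the two relations of (c) say precisely that precomposition with $f$ is an endomorphism of the group $(H^M, \ast)$ fixing the neutral element, which is how I would phrase the conclusion.
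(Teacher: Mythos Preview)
Your proof is correct and is precisely the trivial pointwise verification the paper has in mind; the paper itself dismisses this proposition with the single line ``The proof is trivial.'' There is nothing to add.
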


The proof is trivial.

\begin{theorem}
\label{th6}
Let $H$ be an arbitrary group, $M$ be an arbitrary set.
Then the mapping
$$
T: H^M \times Hg(M, H) \rightarrow  Hg(M, H), \quad (\kappa, M_H) \mapsto (M_H)_\kappa
$$
is a reverse action of the group $H^M$ on the set $Hg(M, H)$, that is, the following properties hold:
\begin{itemize}
\item[(a)] for arbitrary $\kappa, \lambda \in H^M$  and for every hypergroup $M_H$ over the group
$(M_H)_{\lambda * \kappa} = ((M_H)_\kappa)_\lambda$;
\item[(b)] the neutral element $\varepsilon^M$ of the group $H^M$ acts on $Hg(M, H)$ identically,
that is $(M_H)_{\varepsilon^M} = M_H$  for any $M_H \in Hg(M, H)$.
\end{itemize}
\end{theorem}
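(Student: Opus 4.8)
The plan is to verify the two action axioms directly from the explicit transition formulas $(i)_\kappa$--$(iv)_\kappa$, which already express the structural mappings of $(M_H)_\kappa$ through those of $M_H$ and the cochain $\kappa$. Since the Corollary to Proposition~\ref{prop3} guarantees that $(M_H)_\kappa$ is again a hypergroup over the group, the same formulas may be applied a second time, with $\kappa$ replaced by $\lambda$ and $\Omega$ replaced by $\Omega_\kappa$, to compute $((M_H)_\kappa)_\lambda = (\Omega_\kappa)_\lambda$. The single observation that makes the whole argument run smoothly is that, by $(i)_\kappa$, the mapping $\Phi$ is untouched: $\Phi_\kappa = \Phi$ for every $\kappa$. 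Hence the $H$-action, and with it every exponent index $a^\alpha = \Phi(a,\alpha)$, denotes literally the same element of $M$ whether it is read in $M_H$, in $(M_H)_\kappa$, or in $(M_H)_{\lambda*\kappa}$, so the iterated formulas are unambiguous.

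For axiom (b) I would substitute $\kappa = \varepsilon^M$, i.e.\ $\kappa_a = \varepsilon$ for all $a$, into $(i)_\kappa$--$(iv)_\kappa$. Formula $(i)_\kappa$ gives $\Phi_{\varepsilon^M} = \Phi$ at once; $(ii)_\kappa$ gives $\Psi_{\varepsilon^M}(a,\alpha) = \varepsilon \cdot {^a}\alpha \cdot \varepsilon^{-1} = {^a}\alpha$; $(iii)_\kappa$ gives $\Xi_{\varepsilon^M}(a,b) = [a^\varepsilon, b] = [a,b]$ by P2(ii); and $(iv)_\kappa$ gives $\Lambda_{\varepsilon^M}(a,b) = {^a}\varepsilon \cdot (a,b) = (a,b)$ by (A6). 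Thus $(M_H)_{\varepsilon^M} = M_H$.

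For axiom (a) I would compare $(M_H)_{\lambda*\kappa}$ with $((M_H)_\kappa)_\lambda$ componentwise, using $(\lambda*\kappa)_a = \lambda_a \cdot \kappa_a$ throughout and abbreviating $a' = a^{\lambda_b}$. The $\Phi$-components coincide by $(i)_\kappa$. For $\Psi$, formula $(ii)_\kappa$ applied to $\lambda*\kappa$ yields $\lambda_a\kappa_a \cdot {^a}\alpha \cdot \kappa_{a^\alpha}^{-1}\lambda_{a^\alpha}^{-1}$, while the $\lambda$-instance of $(ii)_\kappa$ on $\Omega_\kappa$ yields $\lambda_a\,(\kappa_a \cdot {^a}\alpha \cdot \kappa_{a^\alpha}^{-1})\,\lambda_{a^\alpha}^{-1}$; these agree. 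For $\Xi$, the left side is $[a^{\lambda_b\kappa_b}, b]$ and the right side is $\Xi_\kappa(a',b) = [(a')^{\kappa_b}, b]$, and these coincide by P2(i).

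The hard part will be the $\Lambda$-component, where all the bookkeeping concentrates. On the left I would expand $(iv)_\kappa$ for $\lambda*\kappa$, splitting ${^a}(\lambda_b\kappa_b) = {^a}\lambda_b \cdot {^{a'}}\kappa_b$ via (A1), rewriting $a^{\lambda_b\kappa_b} = (a')^{\kappa_b}$ via P2(i), and using $(\lambda*\kappa)_{[(a')^{\kappa_b}, b]}^{-1} = \kappa_{[(a')^{\kappa_b}, b]}^{-1}\,\lambda_{[(a')^{\kappa_b}, b]}^{-1}$. On the right I would substitute into the $\lambda$-instance of $(iv)_\kappa$ the values $\Phi_\kappa(a,\lambda_b) = a'$, $\Psi_\kappa(a,\lambda_b) = \kappa_a \cdot {^a}\lambda_b \cdot \kappa_{a'}^{-1}$, $\Lambda_\kappa(a',b) = \kappa_{a'} \cdot {^{a'}}\kappa_b \cdot ((a')^{\kappa_b}, b) \cdot \kappa_{[(a')^{\kappa_b}, b]}^{-1}$, and $\Xi_\kappa(a',b) = [(a')^{\kappa_b}, b]$. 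Two points are decisive: the index $[(a')^{\kappa_b}, b]$ delivered by $\Xi_\kappa$ is the very same element of $M$ as the one produced on the left, so the trailing $\lambda^{-1}$ factors match; and the adjacent factors $\kappa_{a'}^{-1}$ and $\kappa_{a'}$ on the right cancel. After this cancellation both sides collapse to the common value
\[
\lambda_a \kappa_a \cdot {^a}\lambda_b \cdot {^{a'}}\kappa_b \cdot ((a')^{\kappa_b}, b) \cdot \kappa_{[(a')^{\kappa_b}, b]}^{-1} \cdot \lambda_{[(a')^{\kappa_b}, b]}^{-1},
\]
which gives $\Lambda_{\lambda*\kappa} = (\Lambda_\kappa)_\lambda$ and completes axiom (a).
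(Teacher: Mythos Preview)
Your proof is correct and follows essentially the same route as the paper's: both verify (a) and (b) componentwise from the formulas $(i)_\kappa$--$(iv)_\kappa$, relying on $\Phi_\kappa=\Phi$, on P2(i) for the $\Xi$-component, and on (A1) together with the cancellation $\kappa_{a'}^{-1}\kappa_{a'}$ for the $\Lambda$-component. Your abbreviation $a'=a^{\lambda_b}$ and your explicit treatment of (b) make the write-up slightly cleaner, but the argument is the same.
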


\begin{proof}
To get the properties (a) and (b) it is necessary to check
the corresponding properties for each structural mapping
separately,
using the definitions $(i)_\kappa- (iv)_\kappa$.
This is trivial for property (b).
Let us check property (a).

$(i)_\kappa$. For structural mapping $\Phi$ we have
$$
((M_H)_\kappa)_\lambda = (M_H)_\kappa = M_H = (M_H)_{\lambda * \kappa}.
$$

$(ii)_\kappa$. Let us check property (a) for $\Psi$. For any $\kappa, \lambda \in H^M$ we have
$$
(\Psi_\kappa)_\lambda (a, \alpha) = \lambda_a \cdot \Psi_\kappa(a, \alpha) \cdot \lambda_{a^\alpha}^{-1} =
\lambda_a \cdot \kappa_a \cdot \Psi(a, \alpha) \cdot \kappa_{a^\alpha}^{-1} \cdot \lambda_{a^\alpha}^{-1} =
$$
$$
= (\lambda * \kappa)_a \cdot \Psi(a, \alpha) \cdot (\lambda * \kappa)_{a^\alpha}^{-1} =
\Psi_{\lambda * \kappa} (a, \alpha).
$$

$(iii)_\kappa$. In the case of the structural mapping
$\Xi$ we have
$$
(\Xi_\kappa)_\lambda (a, b) = \Xi_\kappa(\Phi_\kappa(a, \lambda_b), b) =
\Xi(\Phi(\Phi(a, \lambda_b), \kappa_b), b) =
$$
$$
= \Xi(\Phi(a, \lambda_b \cdot \kappa_b), b)
= \Xi(\Phi(a, (\lambda * \kappa)_b), b)= \Xi_{\lambda * \kappa} (a, b).
$$

$(iv)_\kappa$. For the structural mapping $\Lambda$
the calculations are more complicated.
$$
(\Lambda_\kappa)_\lambda (a, b)=
\lambda_a \cdot \Psi_\kappa(a, \lambda_b) \cdot \Lambda_\kappa(\Phi_\kappa(a. \lambda_b), b) \cdot
\lambda_{\Xi_\kappa(\Phi_\kappa(a, \lambda_b), b)}^{-1} =
$$
$$
= \lambda_a \cdot (\kappa_a \cdot \Psi(a, \lambda_b) \cdot \kappa_{\Phi(a, \lambda_b)}^{-1}) \cdot
\kappa_{\Phi(a, \lambda_b)} \cdot \Psi(\Phi(a, \lambda_b), \kappa_b) \cdot
$$
$$
\cdot \Lambda(\Phi(\Phi(a, \lambda_b), \kappa_b), b) \cdot \kappa_{\Xi(\Phi(\Phi(a, \lambda_b), \kappa_b), b)}^{-1}
\cdot \lambda_{\Xi(\Phi(\Phi(a, \lambda_b), \kappa_b), b)}^{-1} =
$$
$$
= (\lambda_a \cdot \kappa_a) \cdot \Psi(a, \lambda_b) \cdot  \Psi(\Phi(a, \lambda_b), \kappa_b) \cdot
\Lambda(\Phi(a, \lambda_b \cdot \kappa_b), b) \cdot
$$
$$
(\lambda_{\Xi(\Phi(a, \lambda_b \cdot \kappa_b), b)} \cdot \kappa_{\Xi(\Phi(a, \lambda_b \cdot \kappa_b), b)})^{-1} =
$$
$$
= (\lambda * \kappa)_a \cdot \Psi(a, (\lambda * \kappa)_b) \cdot \Lambda(\Phi(a, (\lambda * \kappa)_b), b) \cdot
(\lambda * \kappa)_{\Xi(\Phi(a, (\lambda * \kappa)_b), b)}^{-1} =
$$
$$
= \Lambda_{\lambda * \kappa} (a, b).
$$
Here we have also used the relation (A1).
\end{proof}

Now, we apply proposition $\ref{prop3}$ to a particular case, when
\begin{center}
$M' = M, \quad H' = H, \quad f_0 = 1_H$ and $f_1$ is a bijection.
\end{center}
In this case we use the following notations: $f = f_1^{-1}$ and
\begin{itemize}
\item[$(i)_f$] $\quad \Phi_f = (f^{-1}, 1_H) \cdot \Phi \cdot f$,
\item[$(ii)_f$] $\quad  \Psi_f = (f^{-1}, 1_H) \cdot \Psi \cdot 1_H$,
\item[$(iii)_f$] $\quad  \Xi_f = (f^{-1}, f^{-1}) \cdot \Xi \cdot f$,
\item[$(iv)_f$] $\quad  \Lambda_f = (f^{-1}, f^{-1}) \cdot \Lambda \cdot 1_H$.
\end{itemize}

The set of all bijections $M \rightarrow M$ is denoted by $S_M$.

Then as an immediate consequence of proposition $\ref{prop3}$ we have
the following result.

\begin {corollary}
For any $M_H \in Hg(N, H)$ and for each $f \in S_M$
the system of mappings $\Omega_f, = (\Phi_f, \Psi_f, \Xi_f, \Lambda_f)$
defines a structure of a hypergroup over the group $H$ on the basic set $M$.
\end{corollary}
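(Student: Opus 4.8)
The plan is to read this off Proposition \ref{prop3} applied to the special configuration fixed just above, in which $M' = M$, $H' = H$, $f_0 = 1_H$ and $f_1$ is a bijection. Concretely, I would take as source the given hypergroup $(M,\Omega)$, which is a genuine hypergroup by hypothesis, and as candidate target the pair $(M,H)$ equipped with the system $\Omega_f = (\Phi_f, \Psi_f, \Xi_f, \Lambda_f)$ produced by the formulas $(i)_f$--$(iv)_f$. Three of the standing hypotheses of Proposition \ref{prop3} are then already in place: $1_H$ is an isomorphism, hence an epimorphism; the bijection $f$ is in particular a surjection; and $(M,\Omega)$ is a hypergroup over the group. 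So the whole statement will follow once the remaining hypothesis is secured, namely that the pair $(1_H, f)$ preserves the two structural systems $\Omega$ and $\Omega_f$.

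That preservation is exactly the content of the defining formulas, and verifying it is the only real step. The point is that $(i)_f$--$(iv)_f$ are precisely the four commuting squares of the morphism definition for the pair $(1_H, f)\colon (M,\Omega)\to(M,\Omega_f)$, rewritten so as to solve for the target maps. For $\Phi$ this amounts to the identity $\Phi_f(f(a),\alpha) = f(\Phi(a,\alpha))$; substituting the definition $\Phi_f(a,\alpha) = f(\Phi(f^{-1}(a),\alpha))$ and cancelling $f^{-1}\circ f$ reduces both sides to $f(\Phi(a,\alpha))$. The computation for $\Xi$ is identical, using $\Xi_f(a,b)=f(\Xi(f^{-1}(a),f^{-1}(b)))$. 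For $\Psi$ and $\Lambda$ the codomain is $H$ and the group component of the pair is the identity $1_H$, so the squares collapse to $\Psi_f(f(a),\alpha)=\Psi(a,\alpha)$ and $\Lambda_f(f(a),f(b))=\Lambda(a,b)$, which are immediate from $(ii)_f$ and $(iv)_f$. Thus $(1_H,f)$ is structure-preserving; since both of its components are invertible, it is in fact an isomorphism of structural systems, whose inverse $(1_H,f^{-1})$ — the pair featured in the displayed setup — preserves the same data.

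With preservation in hand, Proposition \ref{prop3} applies directly: it yields conditions P1(ii), P2, P3 and P4 for $\Omega_f$, and its closing clause upgrades the partial form of P1(i) to its full form because $f$ is a bijection. Hence $\Omega_f$ satisfies all of P1--P4, which is exactly what it means for $\Omega_f$ to define a hypergroup over the group $H$ on the set $M$. I do not anticipate any genuine difficulty here; the one thing to keep straight is the direction of the transport, i.e. the bookkeeping interplay between $f$ and $f^{-1}$ enforced by the convention $f = f_1^{-1}$, but since the corollary is quantified over all $f \in S_M$ this choice is immaterial.
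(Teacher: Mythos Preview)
Your proposal is correct and takes essentially the same approach as the paper, which simply declares the corollary an immediate consequence of Proposition~\ref{prop3} in the special case $M'=M$, $H'=H$, $f_0=1_H$, $f_1$ bijective. You merely make explicit what the paper leaves implicit, namely the verification that the defining formulas $(i)_f$--$(iv)_f$ encode precisely the four commuting squares needed for structure preservation, and you correctly handle the bookkeeping between $f$ and $f^{-1}$.
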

We denote this hypergroup over the group by $(M_H)_f$.

It is not difficult to check the following theorem.

\begin{theorem}
\label{thm7a}
For an arbitrary group $H$ and for each set $M$ the mapping
$$
{\mathcal I}: Hg(M, H) \times S_M \rightarrow Hg(M, H), \quad (M_H, f) \mapsto (M_H)_f
$$
defines an action of $S_M$ on the set $Hg(M, H)$,
i.e., for any $M_H$ from $Hg(M, H)$ we have:
\begin{center}
(a) $(M_H)_{f \cdot g} = ((M_H)_f)_g, \quad f, g \in S_M$, \quad
(b) $(M_H)_{1_M} = M_H.$
\end{center}
\end {theorem}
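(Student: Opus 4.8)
The plan is to verify the two action axioms (a) and (b) componentwise, treating each of the four structural mappings $\Phi$, $\Psi$, $\Xi$, $\Lambda$ separately, exactly as was done for the $H^M$-action in Theorem \ref{th6}. Property (b) requires essentially no work: setting $f = 1_M$ in the defining formulas $(i)_f$--$(iv)_f$ and using $1_M^{-1} = 1_M$, each of $\Phi_{1_M}, \Psi_{1_M}, \Xi_{1_M}, \Lambda_{1_M}$ collapses to the corresponding original mapping, so that $(M_H)_{1_M} = M_H$.

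For property (a) I would first rewrite $(i)_f$--$(iv)_f$ as explicit pointwise formulas. Reading the composition $\cdot$ in the same diagrammatic order used throughout the paper (apply the left-hand factor first, as in the componentwise rule $f_1 \circ g_1$ for morphisms), these become
$$
\Phi_f(a, \alpha) = f(\Phi(f^{-1}(a), \alpha)), \qquad \Psi_f(a, \alpha) = \Psi(f^{-1}(a), \alpha),
$$
$$
\Xi_f(a, b) = f(\Xi(f^{-1}(a), f^{-1}(b))), \qquad \Lambda_f(a, b) = \Lambda(f^{-1}(a), f^{-1}(b)).
$$
In other words, $(M_H)_f$ is simply the transport of the structure of $M_H$ along the bijection $f$: every argument lying in $M$ is first pulled back by $f^{-1}$, and every value lying in $M$ is pushed forward by $f$, while values in $H$ are left untouched (which is exactly why $\Psi_f$ and $\Lambda_f$ carry no outer $f$).

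With these formulas in hand, axiom (a) becomes a direct substitution. Applying the $g$-operation to $(M_H)_f$ replaces $f$ by $g$ and composes the two transports; for instance, for $\Xi$ one obtains $(\Xi_f)_g(a,b) = g(f(\Xi(f^{-1}(g^{-1}(a)), f^{-1}(g^{-1}(b)))))$, whereas $\Xi_{f\cdot g}(a,b) = (f\cdot g)(\Xi((f\cdot g)^{-1}(a), (f\cdot g)^{-1}(b)))$. These agree once one invokes the identity $(f\cdot g)^{-1} = g^{-1}\cdot f^{-1}$ in $S_M$ together with associativity of composition, and the identical comparison works verbatim for $\Phi$; for $\Psi$ and $\Lambda$ it is even simpler, since there is no outer transport to track and only the inner $(f\cdot g)^{-1} = g^{-1}\cdot f^{-1}$ is needed. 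The only point demanding care --- and the single place where a sign- or order-error could creep in --- is the bookkeeping of the composition order and of the inverse $(f\cdot g)^{-1}$ in the chosen diagrammatic convention; once that convention is fixed consistently, all four verifications are mechanical and the theorem follows.
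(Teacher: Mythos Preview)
Your proposal is correct and is precisely the direct componentwise verification the paper has in mind: the paper itself offers no proof beyond the sentence ``It is not difficult to check the following theorem,'' and the pointwise formulas you write out for $\Phi_f,\Psi_f,\Xi_f,\Lambda_f$ agree exactly with how the paper computes them later (e.g.\ in the proof of Lemma~\ref{lem11a}). Your transport-of-structure reading and the check via $(f\cdot g)^{-1}=g^{-1}\cdot f^{-1}$ in the diagrammatic convention are exactly the routine calculation the paper omits.
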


The following lemma connects the actions $T$ and $\mathcal I$.

\begin{lemma}
\label{lem11a}
The actions $T$ and $\mathcal I$ are connected
by the following relation:
for an arbitrary hypergroup over the group $M_H \in Hg(M, H)$
and for any elements $\kappa \in M^H$ and $f \in S_M$
$$
((M_H)_f)_\kappa = ((M_H)_{f \cdot \kappa})_f.
$$
\end{lemma}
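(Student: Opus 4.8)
The plan is to prove the identity by showing that the two hypergroups over the group $H$ on the set $M$ appearing on either side have identical systems of structural mappings. Since a hypergroup over the group is completely determined by its quadruple $\Omega = (\Phi, \Psi, \Xi, \Lambda)$, it suffices to verify that each of the four components agrees on the left and on the right. Everything in sight is explicit: the reverse action $T$ of $H^M$ is given componentwise by the formulas $(i)_\kappa$--$(iv)_\kappa$, and the action $\mathcal{I}$ of $S_M$ by $(i)_f$--$(iv)_f$, so the proof reduces to a direct, bookkeeping-heavy computation of both sides.

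First I would record the structural mappings of $(M_H)_f$ from $(i)_f$--$(iv)_f$; reading each dotted composite in the order its domains dictate, these are $\Phi_f(a,\alpha) = f(\Phi(f^{-1}(a),\alpha))$, $\Psi_f(a,\alpha) = \Psi(f^{-1}(a),\alpha)$, $\Xi_f(a,b) = f(\Xi(f^{-1}(a),f^{-1}(b)))$, and $\Lambda_f(a,b) = \Lambda(f^{-1}(a),f^{-1}(b))$. I would then substitute these into $(i)_\kappa$--$(iv)_\kappa$ to obtain the four structural mappings of the left-hand side $((M_H)_f)_\kappa$, taking care that each $\kappa$-subscript is evaluated on the $f$-twisted operations; for instance the $\Xi$-component becomes $f(\Xi(\Phi(f^{-1}(a),\kappa_b),f^{-1}(b)))$.

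For the right-hand side I would set $\mu = f\cdot\kappa$, so that $\mu_a = \kappa_{f(a)}$, write out the structural mappings of $(M_H)_\mu$ directly from $(i)_\kappa$--$(iv)_\kappa$, and then apply $(i)_f$--$(iv)_f$ a second time to reach $((M_H)_{f\cdot\kappa})_f$. The decisive simplification appears here: wherever the outer $f$-action feeds a $\mu$-subscript an argument of the form $f^{-1}(x)$, it collapses by $\mu_{f^{-1}(x)} = \kappa_{f(f^{-1}(x))} = \kappa_x$, which is exactly what is needed to match the $\kappa$-subscripts produced on the left. With this in hand, the $\Phi$- and $\Xi$-components agree at once (they never involve the $\kappa$-factors, only the twist $\Phi(-,\kappa_b)$), and the $\Psi$-components agree once the collapse is applied to the trailing subscript.

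The only genuinely laborious step, and the likeliest place for a slip, is the $\Lambda$-component, whose defining formula $(iv)_\kappa$ is a four-fold product with deeply nested subscripts. I would expand $(\Lambda_f)_\kappa$ and $(\Lambda_\mu)_f$ in parallel, track the inner arguments $\Phi(f^{-1}(a),\kappa_b)$ and $\Xi(\Phi(f^{-1}(a),\kappa_b),f^{-1}(b))$, and use $\mu_{f^{-1}(x)} = \kappa_x$ to identify the leading and trailing $\kappa$-factors; the two middle factors (the $\Psi$- and $\Lambda$-values) already coincide, since on both sides they are evaluated at $(f^{-1}(a),\kappa_b)$ and at $(\Phi(f^{-1}(a),\kappa_b),f^{-1}(b))$. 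Once all four components are seen to agree, the two hypergroups over the group coincide and the lemma follows.
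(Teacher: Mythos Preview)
Your proposal is correct and follows essentially the same approach as the paper: both verify the identity by checking that the four structural mappings $\Phi,\Psi,\Xi,\Lambda$ of $((M_H)_f)_\kappa$ and $((M_H)_{f\cdot\kappa})_f$ coincide via direct computation from $(i)_\kappa$--$(iv)_\kappa$ and $(i)_f$--$(iv)_f$, with the key cancellation $\mu_{f^{-1}(x)}=\kappa_x$ for $\mu=f\cdot\kappa$ doing the work. The paper simply writes out all four computations in full rather than describing them, but the argument is the same.
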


\begin{proof}
For the systems of structural mappings
\begin{center}
$(\Omega_f)_\kappa = ((\Phi_f)_\kappa, (\Psi_f)_\kappa, (\Xi_f)_\kappa, (\Lambda_f)_\kappa)$ and
$(\Omega_{f \cdot\kappa)_f} = ((\Phi_{f \cdot\kappa})_f, (\Psi_{f \cdot\kappa})_f, (\Xi_{f \cdot\kappa})_f, (\Lambda_{f \cdot\kappa})_f)$
\end{center}
of hypergroups over the group $((M_H)_f)_\kappa$ and $((M_H)_{f \cdot\kappa})_f$, respectively,
we need to check  the equality of all corresponding components.
To this end we use the definitions of the actions $T$ and $\mathcal I$.

1) $(\Phi_f)_\kappa (a, \alpha) = \Phi_f (a, \alpha) = ((f^{-1}. 1_H) \cdot \Phi \cdot f) (a, \alpha) =
f(f^{-1}(a)^\alpha)$,

$(\Phi_{f \cdot \kappa})_f (a, \alpha) = ((f^{-1}, 1_H) \cdot \Phi_{f \cdot \kappa} \cdot f)(a, \alpha) =
f(\Phi_{f \cdot \kappa}(f^{-1}(a), \alpha)) =$\\
$= f(f^{-1}(a)^\alpha)$.

2) $(\Psi_f)_\kappa (a, \alpha) = \kappa_a \cdot \Psi_f (a, \alpha) \cdot \kappa^{-1}_{\Phi_f(a, \alpha)} =
\kappa_a \cdot {^{f^{-1}(a)}} \alpha \cdot \kappa^{-1}_{f(f^{-1}(a)^\alpha)}$,

$(\Psi_{f \cdot \kappa})_f (a, \alpha) = ((f^{-1}, 1_H) \cdot \Psi_{f \cdot \kappa} \cdot 1_H)(a, \alpha) =
\Psi_{f \cdot \kappa} (f^{-1}(a), \alpha) = $\\
$= (f \cdot \kappa)_{f^{-1}(a)} \cdot \Psi (f^{-1}(a), \alpha) \cdot ((f \cdot \kappa)_{\Phi(f^{-1}(a), \alpha)})^{-1} =$\\
$\kappa_a \cdot {^{f^{-1}(a)}} \alpha \cdot \kappa^{-1}_{f(f^{-1}(a)^\alpha)}$.

3) $(\Xi_f)_\kappa (a, b) = \Xi_f (\Phi_f(a, \kappa_b), b) = f([f^{-1}(f(f^{-1}(a)^{\kappa_b}), f^{-1}(b)]) =$\\
$= f([f^{-1}(a)^{\kappa_b}, f^{-1}(b)])$,

$(\Xi_{f \cdot \kappa})_f (a, b) = f(\Xi_{f \cdot \kappa}(f^{-1}(a), f^{-1}(b))) = $\\
$= f([f^{-1}(a)^{(f \cdot \kappa)_{f^{-1}(b)}}, f^{-1}(b)]) = f([f^{-1}(a)^{\kappa_b}, f^{-1}(b)])$.

4) $(\Lambda_f)_\kappa (a, b) = \kappa_a \cdot \Psi_f(a, \kappa_b) \cdot \Lambda_f(\Phi_f (a, \kappa_b), b) \cdot \kappa^{-1}_{\Xi_f (\Phi_f (a, \kappa_b), b)} =$\\
$\kappa_a \cdot {^{f^{-1}(a)}}(\kappa_b) \cdot (f^{-1}(f(f^{-1}(a)^{\kappa_b})), f^{-1}(b)) \cdot \kappa^{-1}_{f([f^{-1}(f(f^{-1}(a)^{\kappa_b})), f^{-1}(b)])} =$\\
$= \kappa_a \cdot {^{f^{-1}(a)}}(\kappa_b) \cdot (f^{-1}(a)^{\kappa_b}, f^{-1}(b)) \cdot \kappa^{-1}_{f([f^{-1}(a)^{\kappa_b}, f^{-1}(b)])}$,

$(\Lambda_{f \cdot \kappa})_f (a, b) = \Lambda_{f \cdot \kappa} (f^{-1}(a), f^{-1}(b)) = $\\
$= (f \cdot \kappa)_{f^{-1}(a)}  \cdot \Psi(f^{-1}(a), (f \cdot \kappa)_{f^{-1}(b)})\cdot \Lambda(\Phi(f^{-1}(a),(f \cdot \kappa)_{f^{-1}(b)}, f^{-1}(b)) \cdot $\\
$((f \cdot \kappa)_{\Xi (\Phi(f^{-1}(a), (f \cdot \kappa)_{f^{-1}(b)}), f^{-1}(b))})^{-1} =$\\
$= \kappa_a \cdot {^{f^{-1}(a)}}(\kappa_b) \cdot (f^{-1}(a)^{\kappa_b}, f^{-1}(b)) \cdot \kappa^{-1}_{f([f^{-1}(a)^{\kappa_b}, f^{-1}(b)])}$.

Lemma $\ref{lem11a}$ is proved.
\end{proof}

\begin{proposition}
\label{pr5}
The set $H^M \times S_M$ together with the binary operation
$$
\kappa f \cdot \lambda g = ((f \cdot \lambda) * \kappa)(f \cdot g)
$$
is a group.
\end{proposition}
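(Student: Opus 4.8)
The claim is that the set $H^M \times S_M$, with the twisted multiplication $\kappa f \cdot \lambda g = ((f \cdot \lambda) * \kappa)(f \cdot g)$, forms a group. The plan is to recognize this as the standard semidirect product construction in disguise: $S_M$ acts on the group $(H^M, *)$ by the rule $f \mapsto (\kappa \mapsto f \cdot \kappa)$, where $(f \cdot \kappa)_a = \kappa_{f(a)}$, and the given operation is precisely the associated semidirect product multiplication (with the action written on the right, so the twist falls on the second factor $\lambda$). Thus rather than verifying the group axioms by brute force, I would first isolate the action and check it is genuinely an action by automorphisms, and then invoke the general fact that a semidirect product of groups is a group.

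First I would verify that the map $S_M \to \mathrm{Aut}(H^M)$, $f \mapsto (\kappa \mapsto f \cdot \kappa)$, is a (possibly anti-)homomorphism respecting the operation $*$. The compatibility $(f \cdot \kappa) * (f \cdot \lambda) = f \cdot (\kappa * \lambda)$ and the normalization $f \cdot \varepsilon^M = \varepsilon^M$ are exactly parts (c) of Proposition~\ref{prop4a}, so each $f$ acts as an endomorphism of $(H^M, *)$; since $f$ is a bijection with inverse $f^{-1} \in S_M$, the map $\kappa \mapsto f \cdot \kappa$ is invertible and hence an automorphism. I must also track the order in which composites act: from $(f \cdot g) \cdot \kappa)_a = \kappa_{(f\cdot g)(a)}$ one reads off how $S_M$ acts, and I would confirm this matches the order of $f \cdot g$ appearing in the stated product, so that the twist is consistent.

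With the action in hand, associativity follows from the semidirect-product identity, but I would nonetheless exhibit the check explicitly, since the twisting is on the left factor of the first component and the placement of the action ($f \cdot \lambda$ rather than $g \cdot \lambda$) is the point most likely to hide a transposition error. The neutral element is $\varepsilon^M \, 1_M$: using $f \cdot \varepsilon^M = \varepsilon^M$ and $\varepsilon^M * \kappa = \kappa = \kappa * \varepsilon^M$ together with $1_M \cdot g = g$ and $f \cdot 1_M = f$, one verifies it is a two-sided identity. For inverses, given $\kappa f$ I would posit $\lambda g$ with $g = f^{-1}$ and solve $(f \cdot \lambda) * \kappa = \varepsilon^M$, giving $\lambda = f^{-1} \cdot \kappa^{-1}$ via Proposition~\ref{prop4a}(b) and the relation $f^{-1} \cdot (f \cdot \mu) = \mu$; I would then confirm this $\lambda g$ is also a left inverse.

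The main obstacle is purely bookkeeping: keeping straight the direction of the action and the precise slot ($f \cdot \lambda$, using the \emph{first} factor's bijection acting on the \emph{second} factor's map) throughout the associativity computation. Because $*$ is generally noncommutative (as $H$ is an arbitrary, possibly noncommutative group) and the action is on the left argument, every rearrangement must respect the order of the factors, so I expect the associativity verification to be the step demanding the most care; once the action is correctly identified, the remaining axioms are routine consequences of Proposition~\ref{prop4a}.
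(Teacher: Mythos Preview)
Your proposal is correct and, at the level of the concrete verifications, coincides with the paper's proof: the paper checks associativity by direct computation using exactly the identities $(f\cdot\kappa)*(f\cdot\lambda)=f\cdot(\kappa*\lambda)$ and $(f\cdot g)\cdot\mu=f\cdot(g\cdot\mu)$ you single out, exhibits the same neutral element $\varepsilon^M 1_M$, and the same inverse $(f^{-1}\cdot\kappa^{-1})f^{-1}$. Your additional framing of the construction as a semidirect product is a welcome conceptual clarification, and in fact the paper makes the same identification immediately after the proof, noting that $\mathcal{G}$ is the wreath product of $H$ by $S_M$.
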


 We denote this group by $\mathcal G$: $\mathcal G$ is the {\it wreath product} of
 the group $H$ by $S(M)$.

\begin{proof}
The above defined operation is associative:
$$
(\kappa f \cdot \lambda g) \cdot \mu h = ((f \cdot \lambda) * \kappa)(f \cdot g) \cdot \mu h =
(((f \cdot g)\cdot \mu) * ((f \cdot \lambda) * \kappa))((f \cdot g) \cdot h) =\\
$$
$$
= (((f \cdot (g\cdot \mu)) * (f \cdot \lambda)) * \kappa)(f \cdot (g \cdot h))
= ((f \cdot ((g\cdot \mu) * \lambda))) * \kappa)(f \cdot (g \cdot h)) =\\
$$
$$
= \kappa f \cdot ((g \cdot \mu) * \lambda)(g \cdot h) = \kappa f \cdot (\lambda g \cdot \mu h).
$$

The neutral element of this operation is $\varepsilon^M 1_M$.  Indeed
$$
\kappa f \cdot \varepsilon^M 1_M = (\kappa * (f \cdot \varepsilon^M))(f \cdot 1_M) =
(\kappa * \varepsilon^M)f = \kappa f,\\
$$
$$
\varepsilon^M 1_M \cdot \kappa f= (\varepsilon^M * (1_M \cdot \kappa))(1_M \cdot f)= \kappa f.
$$

The inverse element to $\kappa f$ is $(f^{-1} \cdot \kappa^{-1})f^{-1}$. Indeed
$$
\kappa f \cdot (f^{-1} \cdot \kappa^{-1})f^{-1} =
(\kappa * (f \cdot (f^{-1} \cdot \kappa^{-1}))(f \cdot f^{-1}) = \varepsilon^M 1_M,
$$
$$
(f^{-1} \cdot \kappa^{-1})f^{-1} \cdot \kappa f =
((f^{-1} \cdot \kappa^{-1}) * (f^{-1} \cdot \kappa))(f^{-1} \cdot f) =
(f^{-1} \cdot (\kappa^{-1} * \kappa)) 1_M = \\
$$
$$
= (f^{-1} \cdot \varepsilon^M) 1_M = \varepsilon^M 1_M.
$$
The proposition is proved.
\end{proof}

\begin{theorem}
\label{the8}
The mapping
$$
{\mathcal A}: Hg(M, H) \times {\mathcal G} \rightarrow Hg(M, H), \quad
(M_H, \kappa f) \mapsto (M_H)_{\kappa f} = ((M_H)_\kappa)_f
$$
is an action of $\mathcal G$ on the set $Hg(M, H)$,
that is the following relations hold:
\begin{center}
(a) $(M_H)_{\kappa f \cdot \lambda g} = ((M_H)_{\kappa f})_{\lambda g}$, \quad
(b) $(M_H)_{\varepsilon^M 1_M} = M_H$.
\end{center}
\end{theorem}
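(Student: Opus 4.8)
The plan is to derive both axioms purely formally from the three transport laws already established, namely the reverse-action property of Theorem \ref{th6}, the genuine action property of Theorem \ref{thm7a}, and the commutation relation of Lemma \ref{lem11a}, together with the multiplication formula of Proposition \ref{pr5}. No fresh computation on the structural mappings is required, since every such computation has already been absorbed into those four statements; the theorem is the standard fact that a wreath product acts through its two canonical sub-actions, specialized to the reverse-action convention used here.

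Axiom (b) I would dispose of immediately: by the definition of $\mathcal A$ we have $(M_H)_{\varepsilon^M 1_M} = ((M_H)_{\varepsilon^M})_{1_M}$, and applying Theorem \ref{th6}(b) to the inner transport and then Theorem \ref{thm7a}(b) to the outer one collapses this to $M_H$.

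For axiom (a) I would expand both sides and meet in the middle. On the left, Proposition \ref{pr5} gives $\kappa f \cdot \lambda g = ((f\cdot\lambda)*\kappa)(f\cdot g)$, so the definition of $\mathcal A$ yields
$$
(M_H)_{\kappa f \cdot \lambda g} = \big((M_H)_{(f\cdot\lambda)*\kappa}\big)_{f\cdot g}.
$$
On the right, two applications of the definition of $\mathcal A$ give
$$
((M_H)_{\kappa f})_{\lambda g} = \Big(\big(((M_H)_\kappa)_f\big)_\lambda\Big)_g.
$$
The crux is to reconcile these. I would apply Lemma \ref{lem11a} with base hypergroup $(M_H)_\kappa$ to rewrite the inner block $\big(((M_H)_\kappa)_f\big)_\lambda$ as $\big(((M_H)_\kappa)_{f\cdot\lambda}\big)_f$; this is the one genuinely nontrivial move, since it interchanges the roles of the $S_M$-factor $f$ and the $H^M$-factor $\lambda$ at the cost of replacing $\lambda$ by $f\cdot\lambda$, and it is precisely the compatibility encoded in Lemma \ref{lem11a}. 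Next, the reverse-action law of Theorem \ref{th6} turns $((M_H)_\kappa)_{f\cdot\lambda}$ into $(M_H)_{(f\cdot\lambda)*\kappa}$, and finally the action law of Theorem \ref{thm7a} fuses the two remaining outer transports $f$ and $g$ into a single $f\cdot g$. After these substitutions the right-hand side becomes $\big((M_H)_{(f\cdot\lambda)*\kappa}\big)_{f\cdot g}$, matching the left-hand side and establishing (a).

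The only real obstacle is bookkeeping: one must respect the reversal built into Theorem \ref{th6} (so that transporting first by $\kappa$ and then by $\lambda$ corresponds to $\lambda*\kappa$, not $\kappa*\kappa$) and the exact form $((\cdot)_f)_\kappa = ((\cdot)_{f\cdot\kappa})_f$ of Lemma \ref{lem11a}; reversing either convention produces a mismatch with the wreath-product multiplication of Proposition \ref{pr5}. Once the three laws are applied in the order Lemma \ref{lem11a}, then Theorem \ref{th6}, then Theorem \ref{thm7a}, the two sides coincide and the proof is complete.
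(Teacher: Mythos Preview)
Your proof is correct and follows essentially the same approach as the paper: both derive (a) and (b) by combining Theorem \ref{th6}, Theorem \ref{thm7a}, Lemma \ref{lem11a}, and Proposition \ref{pr5} in the same order, with the paper presenting it as a single left-to-right chain of equalities and you expanding both sides and meeting in the middle. The logical content is identical.
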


\begin{proof}
 Using Theorems $\ref{th6}$, $\ref{thm7a}$,
 the lemma $\ref{lem11a}$ and Proposition $\ref{pr5}$ we get:
$$
(M_H)_{\kappa f \cdot \lambda g} = (M_H)_{((f \cdot \lambda) * \kappa)(f \cdot g)} =
((M_H)_{(f \cdot \lambda) *\kappa})_{f \cdot g} =
((((M_H)_\kappa)_{f \cdot \lambda})_f)_g = \\
$$
$$
= ((((M_H)_\kappa)_f)_\lambda)_g = ((M_H)_{\kappa f})_{\lambda g}.
$$
$$
(M_H)_{\varepsilon^M 1_M} = ((M_H)_{\varepsilon^M})_{1_M} = (M_H)_{1_M} = M_H.
$$
Thus, Theorem $\ref{the8}$ is proved.
\end{proof}

For any hypergroup over the group $M_H \in Hg(M, H)$,
the set of all hypergroups
 $(M_H)_{\kappa f}$ over the group, where
 $\kappa f \in {\mathcal G}$,
forms an {\it orbit} of $M_H$ under the action
$\mathcal A$ of $\mathcal G$ on the set $Hg(M, H)$.
The orbits having a common element coincide.
The set of all orbits under the action $\mathcal A$
of $\mathcal G$ on the set $Hg(M, H)$
is denoted by $Hg(M, H)\slash {\mathcal A}$ or
$Hg(M, H)\slash {\mathcal G}$.


\section{Proof of the main theorem}

Let $H$ be an arbitrary group.
We call an {\it extension} of $H$
any monomorphism of groups
$\varphi: H \rightarrow G$.
The cardinality (cardinal number)
of the quotient-set $G \slash \varphi(H)$
(or equivalently $\varphi(H) \backslash G$)
is called the {\it degree} of extension $\varphi$.
Two extensions $\varphi: H \rightarrow G$ and
$\varphi': H \rightarrow G'$
are called {\it isomorphic}, if there exists
an isomorphism of groups
$\pi: G \rightarrow G'$ such that
$\varphi' = \varphi \cdot \pi$.
Evidently, isomorphic extensions have the same degree
of extension.
We denote by $Ext_m (H)$ the set of all classes
of isomorphic extensions
of the group $H$, having a degree of extension $m$.

Suppose that $M$ is an arbitrary set of cardinality $m$.
The  mappings
$$
{\mathcal H}: Ext_m(H) \rightarrow Hg(M, H) {\slash} {\mathcal A}, \quad
{\mathcal E}: Hg(M,H) {\slash} {\mathcal A} \rightarrow Ext_m(H)
$$
are defined as follows.

Let $\varphi: H \rightarrow G$ be an arbitrary extension
of the group $H$ of a degree $m$ and
$\overline H$ be its image. We denote by
$\overline \varphi$ the corestriction of $\varphi$
to $\overline H$:
$$
\overline{\varphi}: H \rightarrow {\overline H}, \,\, x \mapsto \varphi(x).
$$

Let $\overline M$ be a complementary set
to $\overline H$ in $G$,
${\overline M}_{\overline{H}}$ be
the associated hypergroup over the group.
Consider an arbitrary set $M$ of cardinality
$m = |M|$ and
an arbitrary bijection
${\overline \varphi}_M: M \rightarrow {\overline M}$.
We set
${\tilde \varphi} = ({\overline \varphi},
{\overline \varphi}_M)$ and
consider the unique hypergroup over the group
$M_H$, which is
isomorphic to ${\overline M}_{\overline H}$
under $\tilde \varphi$
(Proposition \ref{prop3}).
Now we define $\mathcal H$ by assigning
to each extension $\varphi$ this
hypergroup over the group $M_H$.

Conversely, let $M_H$ be an arbitrary
hypergroup over the group
belonging to an element (orbit) of
$Hg(M, H) \slash {\mathcal A}$.
Consider the outer exact product
${\overline G} = H \odot M$,
and the corresponding canonical map
$f_0: H \rightarrow {\overline G}$ (see section 3).
We define $\mathcal E$ by assigning
to each $M_H$ the extension $\varphi = f_0$.

\begin{theorem}
\label{thrm9}
The mappings $\mathcal H$ and $\mathcal E$
are well defined.
\end{theorem}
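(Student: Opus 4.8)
The plan is to handle $\mathcal E$ and $\mathcal H$ separately, in each case reducing well-definedness to the group actions $T$, $\mathcal I$ and their combination $\mathcal A$ built in Section 4.

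For $\mathcal E$, I would first check that $\varphi = f_0$ is genuinely an extension of degree $m$. By Theorem \ref{th5} the map $f_0 \colon H \to {\overline G} = H \odot M$ is an injective homomorphism, hence a monomorphism, and its image $\overline H$ has $\overline M = {\rm im}\, f_1$ as a complementary set with $|\overline M| = |M| = m$; therefore the index $[\,{\overline G} : {\overline H}\,]$ equals $m$, so the degree is $m$. Second, I must show that replacing $M_H$ by another representative $(M_H)_{\kappa f}$ of the same $\mathcal A$-orbit gives an isomorphic extension. Here I would use the construction of Section 4: up to the hypergroup isomorphism $\overline f_\kappa$, the hypergroup $(M_H)_\kappa$ is precisely the standard construction attached to the complementary set $\overline\kappa\,\overline M$ of the \emph{same} subgroup $\overline H$ in the \emph{same} ambient group $\overline G = H \odot M$. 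Thus both $M_H$ and $(M_H)_\kappa$ are isomorphic, over the fixed group $H$, to standard constructions arising from one and the same $\overline G$. Invoking two facts — that an isomorphism of hypergroups over $H$ induces an isomorphism of their outer exact products (immediate from formula (F2)), and that the outer exact product of a standard construction is canonically isomorphic to the ambient group via $\alpha a \mapsto \alpha \cdot a$ (implicit in Theorem \ref{th5}) — one obtains an isomorphism $\pi\colon {\overline G} \to {\overline G}'$ of the two outer exact products whose restriction identifies the two copies of $\overline H$, i.e. $\pi \circ f_0 = f_0'$. The relabeling part $f \in S_M$ is handled identically, the comparison isomorphism now coming from the pair $(1_H, f)$. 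Combining these with Theorem \ref{the8} shows $\mathcal E$ is constant on $\mathcal A$-orbits.

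For $\mathcal H$, the assignment $\varphi \mapsto M_H$ rests on two choices — the complementary set $\overline M$ to $\overline H$ in $G$ and the labeling bijection $\overline\varphi_M \colon M \to \overline M$ — and must also respect isomorphism of extensions. I would dispose of the labeling first: two bijections $M \to \overline M$ differ by some $f \in S_M$, and by the definition of the action $\mathcal I$ (Theorem \ref{thm7a}) the transported hypergroups are $M_H$ and $(M_H)_f$, hence lie in one $\mathcal A$-orbit. Next, for two complementary sets $\overline M, \overline M'$ to $\overline H$, each element of $\overline M'$ lies in a unique coset $\overline H \cdot \bar m$ with $\bar m \in \overline M$, so $\overline M' = \overline\eta\,\overline M$ for a uniquely determined family $\eta \in {\overline H}^{\,\overline M}$; pulling $\eta$ back through $\overline\varphi$ and the labeling yields a $\kappa \in H^M$, and by Lemma \ref{lm8a} the hypergroup built from $\overline M'$ is exactly $(M_H)_\kappa$. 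Hence changing the complementary set moves $M_H$ within its $H^M$-orbit, again a single $\mathcal A$-orbit.

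Finally I would treat isomorphic extensions: if $\varphi' = \varphi \cdot \pi$ for an isomorphism $\pi \colon G \to G'$, then $\pi$ carries $\overline H$ onto ${\overline H}' = \varphi'(H)$ and each complementary set $\overline M$ onto a complementary set $\pi(\overline M)$ of ${\overline H}'$; being a group isomorphism, $\pi$ induces an isomorphism of standard-construction hypergroups $\overline M_{\overline H} \cong \pi(\overline M)_{{\overline H}'}$ with group component $\pi|_{\overline H}$. Since $\overline\varphi' = \overline\varphi \cdot \pi|_{\overline H}$, transporting through the respective labelings produces the same hypergroup $M_H$ over the fixed $H$, so $\varphi$ and $\varphi'$ yield the same orbit. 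The main obstacle throughout is the bookkeeping: at every stage one must verify that the group component of each induced hypergroup isomorphism is the identity on $H$, so that all comparisons take place genuinely inside $Hg(M,H)$ over the fixed group $H$ and the entire residual ambiguity is captured exactly by the wreath-product action $\mathcal A$ of Section 4, and not by some larger isomorphism group.
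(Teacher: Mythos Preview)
Your treatment of $\mathcal H$ matches the paper's almost exactly: the paper also breaks the independence check into (a) choice of complementary set, (b) choice of bijection, (c) choice of representative in the isomorphism class, and disposes of each using the actions $T$ and $\mathcal I$ just as you do.

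For $\mathcal E$, however, you take a genuinely different route. The paper argues by explicit construction: it writes down the map $g_\kappa\colon \overline G_\kappa \to \overline G$, $\alpha a \mapsto (\alpha\cdot\kappa_a)a$, checks by direct calculation using $(i)'_\kappa$--$(iv)'_\kappa$ that it is a group isomorphism (Lemma~\ref{lem10}), then computes $o_\kappa=o$ and $\theta_\kappa=\theta\cdot\kappa_o^{-1}$ to verify $(f_0)_\kappa\cdot g_\kappa=f_0$ (Lemma~\ref{lem11}); an analogous explicit map $\overline{f^{-1}}\colon \alpha a\mapsto \alpha f^{-1}(a)$ handles the $S_M$-part (Lemma~\ref{lm12}). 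Your argument is instead structural: you observe that $(M_H)_\kappa$ is, via $\overline f_\kappa$, the standard construction attached to another transversal $\overline\kappa\,\overline M$ of the \emph{same} subgroup $\overline H$ inside the \emph{same} $\overline G$, and then invoke functoriality of the outer exact product together with the fact that the outer exact product of a standard construction recovers the ambient group. This is correct, and if one unwinds your composite isomorphism one recovers exactly the paper's $g_\kappa$. Your approach is cleaner and explains \emph{why} the explicit formula works; the paper's approach is more self-contained, since the two facts you call ``immediate from (F2)'' and ``implicit in Theorem~\ref{th5}'' are not actually stated in the paper before this point (the second is essentially Lemma~\ref{lm13}, proved only inside Theorem~\ref{th10}). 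One small slip: the hypergroup isomorphism $\overline f_\kappa$ does \emph{not} have identity group component---its group part is $\overline f_0\colon H\to\overline H$---so your closing remark about checking that all group components are the identity on $H$ is not quite the right formulation; what matters is that the group component is the \emph{same} $\overline f_0$ for both $M_H$ and $(M_H)_\kappa$, so that the two resulting embeddings of $H$ into $\overline G$ coincide.
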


\begin{proof}
We need to check the following two assertions.

1) The orbit under action $\mathcal A$
of a hypergroup $M_H$,
corresponding to an extension
$\varphi: H \rightarrow G$
 via above construction,  does not depend:
\begin{itemize}
\item[(a)] on the choice of a complementary set ${\overline M}$ to $\overline H$ in $\overline G$,
\item[(b)] on the choice of a bijection ${\overline \varphi}_M: M \rightarrow {\overline M}$ and
\item[(c)] on the choice of a representative $\varphi: H \rightarrow G$ in a class of isomorphic extensions.
\end{itemize}

2) For any hypergroup over the group
$M_H \in Hg(M, H)$ and
for an arbitrary element $\kappa f \in {\mathcal G}$
the extensions $f_0$ and $(f_0)_{\kappa f}$,
corresponding to hypergroups $M_H $ and
$(M_H)_{\kappa f}$,
are isomorphic.

Below we prove these assertions.

1a) Assume that we have arbitrary two complementary sets
$\overline M$
and ${\overline M}'$ to $\overline H$ in $G$.
Then there is a (unique) $\kappa \in H^M$ such that
${\overline M}' = {\overline \kappa} \cdot {\overline M}$.
Therefore, if $M_H, \, (M_H)' \in Hg(M, H)$
correspond to
complementary sets $\overline M$ and
${\overline M}'$, respectively,
then $(M_H)' = (M_H)_\kappa$.
Consequently, $M_H$ and $(M_H)'$ belong
to a same orbit under action
$\mathcal A$ of the group $\mathcal G$.

1b) Assume that
${\overline \varphi}_M: M \rightarrow {\overline M}$ and
${\overline \varphi}'_M: M \rightarrow {\overline M}$
are two bijections.
Then there exists a (unique) element $f \in S_M$ such that
${\overline \varphi}'_M = f \cdot {\overline \varphi}_M$.
This means that the hypergroups
$M_H, \, (M_H)' \in Hg(M, H)$,
corresponding to the complementary
set $\overline M$ under bijections
${\overline \varphi}_M$ and
${\overline \varphi}'_M$, are connected by relation
$(M_H)' = (M_H)_f$.
Therefore $M_H$ and $(M_H)'$ belong to a same orbit
under action $\mathcal A$
of the group $\mathcal G$.

1c) Let $\varphi: H \rightarrow G$ and
$\varphi': H \rightarrow G'$
be two isomorphic extensions via
an isomorphism $\pi: G \rightarrow G'$.
Let $\overline M$ be a complementary set to
${\overline H} = \varphi (H)$ in $G$.
Then ${\overline M}' = \pi({\overline M})$
is a complementary set to
${\overline H}' = \varphi' (H)$ in $G'$.

Let
${\overline \varphi}_M: M \rightarrow {\overline M}$
be a bijection.
Then
${\overline \varphi}'_M = {\overline \varphi}_M \cdot \pi: M \rightarrow {\overline M}'$
is a bijection as well.
Assume that $M_H$ corresponds to the hypergroup
${\overline M}_{\overline H}$
with respect to ${\overline \varphi}_M$ and
$(M_H)'$ corresponds to the hypergroup
${\overline M}'_{{\overline H}'}$
with respect to ${\overline \varphi}'_M$.
Then it is not difficult to see that $M_H = (M_H)'$.

So, the assertion 1) is proved. Now we check the assertion 2).

2)
The extension
$\varphi = f_0: H \rightarrow {\overline G}$
associated with the hypergroup $M_H$
is defined as follows.
The group ${\overline G} = H \odot M$
has a basic set
$H \times M$ and a binary operation
$$
\alpha a \cdot \beta b = (\alpha \cdot {^a}\beta \cdot (a^\beta, b))[a^\beta, b].
$$
The monomorphism $f_0$ sends an element
$\alpha$ of $H$
to the element $(\alpha \cdot \theta)o$
of the group
$\overline G$, where
$o$ is the left neutral element of the right quasigroup $(M, \Xi)$
and $\theta = (o, o)^{-1}$.

By definition
$(M_H)_{\kappa \cdot f} = ((M_H)_\kappa)_f$.
Therefore,it is sufficient to check  the assertion 2) separate;y
for $(M_H)_\kappa$ and $(M_H)_f$.
We start by investigating the case $(M_H)_\kappa$.

The extension $(f_0)_\kappa: H \rightarrow {\overline G}_\kappa$,
associated with the hypergroup $(M_H)_\kappa$,
is defined in the following way.

The group ${\overline G}_\kappa = (H \odot M)_\kappa$ has a basic set
$H \times M$ as well,
but the binary operation is defined by formula
$$
(\alpha a \cdot \beta b)_\kappa =
(\alpha \cdot ({^a}\beta)_\kappa \cdot ((a^\beta)_\kappa, b)_\kappa)[(a^\beta)_\kappa, b]_\kappa.
$$

Here for brevity we use notations
$$
\Phi_\kappa (a, \alpha) = (a^\alpha)_\kappa, \quad
\Psi_\kappa (a, \alpha) = ({^a}\alpha)_\kappa,
$$
$$
\Xi_\kappa (a, b) = [a, b]_\kappa, \quad
\Lambda_\kappa (a, b) = (a, b)_\kappa.
$$
Note that then the relations
$(i)_\kappa - (iv)_\kappa$ of section 4 take a form
\begin{itemize}
\item[$(i)'_\kappa$] \quad $(a^\alpha)_\kappa  = a^\alpha$,
\item[$(ii)'_\kappa$] \quad $({^a}\alpha)_\kappa = \kappa_a \cdot {^a}\alpha \cdot \kappa_{a^\alpha}^{-1}$,
\item[$(iii)'_\kappa$] \quad $[a, b]_\kappa = [a^{\kappa_b}, b]$,
\item[$(iv)'_\kappa$] \quad $(a, b)_\kappa = \kappa_a \cdot {^a}(\kappa_b) \cdot
(a^{\kappa_b}, b) \cdot \kappa_{[a^{\kappa_b}, b]}^{-1}$.
 \end{itemize}

 Applying these relations, we get
$$
(\alpha \cdot ({^a}\beta)_\kappa \cdot ((a^\beta)_\kappa, b)_\kappa)[(a^\beta)_\kappa, b]_\kappa =
$$
$$
= (\alpha \cdot
\kappa_a \cdot {^a}\beta \cdot \kappa_{a^\beta}^{-1} \cdot
\kappa_{a^\beta} \cdot {^{a^\beta}}(\kappa_b) \cdot ((a^\beta)^{\kappa_b}, b) \cdot \kappa_{[(a^\beta)^{\kappa_b}, b]}^{-1})
[(a^\beta)^{\kappa_b}, b] =
$$
$$
= (\alpha \cdot \kappa_a \cdot {^a}(\beta \cdot \kappa_b) \cdot
(a^{\beta \cdot \kappa_b}, b) \cdot \kappa_{[a^{\beta \cdot \kappa_b}, b]}^{-1})
[a^{\beta \cdot \kappa_b}, b].
$$

Consider a map of sets
$$
g_\kappa: H \times M \rightarrow H \times M, \quad
\alpha a \mapsto (\alpha \cdot \kappa_a)a.
$$
This map represents a group homomorphism from
${\overline G}_\kappa$ to $\overline G$ since
$$
g_\kappa (\alpha a \cdot \beta b)_\kappa =
g_\kappa (\alpha \cdot \kappa_a \cdot {^a}(\beta \cdot \kappa_b) \cdot
(a^{\beta \cdot \kappa_b}, b) \cdot \kappa_{[a^{\beta \cdot \kappa_b}, b]}^{-1})
[a^{\beta \cdot \kappa_b}, b]) =
$$
$$
= (\alpha \cdot \kappa_a \cdot {^a}(\beta \cdot \kappa_b) \cdot
(a^{\beta \cdot \kappa_b}, b)) [a^{\beta \cdot \kappa_b}, b] =
(\alpha \cdot \kappa_a)a \cdot (\beta \cdot \kappa_b)b = g_\kappa(a) \cdot g_\kappa(b).
$$
Furthermore, $g_\kappa$ is a bijection,
since it has an inverse map
$$
(g_\kappa)^{-1} = g_{\kappa^{-1}}, \quad (\kappa^{-1})_a = \kappa_a^{-1}.
$$
Therefore $g_\kappa$ defines an group isomorphism.

So, we have proved the following lemma.

\begin{lemma}
\label{lem10}
For any hypergroup $M_H$ over the group and
for every element $\kappa \in H^M$
there exists a canonical group isomorphism $g_\kappa$
from outer product associated with the hypergroup
$(M_H)_\kappa$
to outer product associated with the hypergroup $M_H$.
This isomorphism is given by the formula
$g_\kappa (\alpha a) = (\alpha \cdot \kappa_a)a$.
\end{lemma}

The monomorphism $(f_0)_\kappa$ sends
an element $\alpha$ of $H$
to the element $(\alpha \cdot \theta_\kappa) o_\kappa$
of the group ${\overline G}_\kappa$,
where $o_\kappa$ is the left neutral element
of the right quasigroup $(M, \Xi_\kappa)$ and
$\theta_\kappa = (o_\kappa, o_\kappa)_\kappa^{-1}$.

\begin{lemma}
\label{lem11}
For any hypergroup $M_H$ over the group and
for every element $\kappa \in H^M$
$$
(a) \,\, o_\kappa = o, \quad
(b) \,\, \theta_\kappa = \theta \cdot \kappa_o^{-1}, \quad
(c) \,\, (f_0)_\kappa \cdot g_\kappa = f_0
$$
\end{lemma}

\begin{proof}
(a) According to $(iii)'_\kappa$, A7 and P1(ii)
for an arbitrary element $a \in M$ we have
$$
[o, a]_\kappa = [o^{\kappa_a}, a] = [o, a] = a.
$$
Hence $o_\kappa = o$.

(b) According to (a), $(iv)'_\kappa$,
A8, A7, A9, P1(ii)
$$
\theta_\kappa = (o_\kappa, o_\kappa)_\kappa^{-1} =
(\kappa_o \cdot {^o}(\kappa_o) \cdot (o^{\kappa_o}, o) \cdot \kappa_{[o^{\kappa_o}, o]}^{-1})^{-1} =
$$
$$
= (\kappa_o \cdot \theta^{-1} \cdot \kappa_o \cdot \theta \cdot \theta^{-1} \cdot \kappa_o^{-1})^{-1} =
\theta \cdot \kappa_o^{-1}.
$$

(c) According to the definition of $(f_o)_\kappa$, (a), (b)
$$
(f_o)_\kappa (\alpha) = (\alpha \cdot \theta_\kappa) o_\kappa = (\alpha \cdot \theta \cdot \kappa_o^{-1})o,
$$
Therefore
$$
g_\kappa (f_o)_\kappa (\alpha)) = g_\kappa ((\alpha \cdot \theta \cdot \kappa_o^{-1})o)=
(\alpha \cdot \theta)o = f_0 (\alpha).
$$
Lemma $\ref{lem11}$ is proved.
\end{proof}

Thus, we checked the assertion 2) for $(M_H)_\kappa$.

Now we investigate the case of $(M_H)_f$.

The extension $(f_0)_f: H \rightarrow {\overline G}_f$,
associated with the hypergroup $(M_H)_f$,
is defined in the following.
The group ${\overline G}_f = (H \odot M)_f$ has also
the basic set $H \times M$,
its binary operation is defined by the formula:
$$
\hspace{20mm}
(\alpha a \cdot \beta b)_f = (\alpha \cdot ({^a}\beta)_f \cdot ((a^\beta)_f, b)_f)[(a^\beta)_f, b]_f.
\hspace{15mm}  (F5)
$$
The monomorphism $(f_0)_f$ is defined by assigning to each element $\alpha \in H$
an element $(\alpha \cdot \theta_f) o_f$, where $o_f$ is the left neutral element
of the right quasigroup $(M, \Xi_f)$ and $\theta_f = \Lambda_f (o_f, o_f)^{-1}$.

In $(F5)$, we have
$$
(a^\beta)_f = \Phi_f (a, \beta) = f(\Phi(f^{-1}(a), \beta)) = f(f^{-1}(a)^ \beta),
$$
$$
({^a}\beta)_f = \Psi_f (a, \beta) = \Psi(f^{-1}(a), \beta) = {^{f^{-1}(a)}}\beta,
$$
$$
[(a^\beta)_f, b]_f = \Xi_f(\Phi_f (a, \alpha), b) = f([f^{-1}(f(f^{-1}(a)^ \alpha)), f^{-1}(b)]) =
$$
$$
= f([f^{-1}(a)^ \alpha, f^{-1}(b)]),
$$
$$
((a^\beta)_f, b)_f = \Lambda_f(\Phi_f (a, \alpha), b) = (f^{-1}(f(f^{-1}(a)^ \alpha)), f^{-1}(b)) =
$$
$$
= (f^{-1}(a)^ \alpha, f^{-1}(b)).
$$
Therefore
$$
(\alpha a \cdot \beta b)_f = (\alpha \cdot {^{f^{-1}(a)}}\beta \cdot (f^{-1}(a)^ \beta, f^{-1}(b))) \,
f([f^{-1}(a)^ \beta, f^{-1}(b)]) =
$$
$$
= \alpha f^{-1}(a) \cdot \beta f^{-1}(b).
$$
Consequently,  by assigning to each element $\alpha a \in {\overline G}$
an element $\alpha f^{-1}(a)$ we get a group homomorphism
${\overline G}_f \rightarrow {\overline G}$, which we denote
$\overline{f^{-1}}$.

Since
$$
\overline{(f \cdot g)^{-1}} = \overline{f^{-1}} \cdot \overline{g^{-1}}, \quad
\overline{1_M} = 1_{\overline G},
$$
then $\overline{f^{-1}}$ is an invertible homomorphism, i.e., it is an isomorphism.

Now. for any $\alpha \in H$, we have
$$
((f_0)_f \cdot \overline{f^{-1}})(\alpha) = \overline{f^{-1}}((f_0)_f(\alpha)) =
\overline{f^{-1}}((\alpha \cdot \theta_f) o_f) = \\
$$
$$
= \overline{f^{-1}}((\alpha \cdot \theta) f(o)) = (\alpha \cdot \theta) f^{-1}(f(o)) = f_0 (\alpha).
$$
Here we use the simple fact that $o_f = f(o)$ and, consequently,
$$
\theta_f = (0_f, 0_f)_f^{-1} = (f^{-1}(f(o)), f^{-1}(f(o)))^{-1} = (o, o)^{-1} = \theta.
$$

Thus, we have obtained the following result.

\begin{lemma}
\label{lm12}
Let $M_H$ be an arbitrary hypergroup over the group, $f \in S_M$.
Then the group extensions
\begin{center}
${\overline \varphi} = f_0: H \rightarrow {\overline G}$ and
${\overline \varphi}_f = (f_0)_f: H \rightarrow {\overline G}_f$
\end{center}
 associated with hypergroups over the group $M_H$ and $(M_H)_f$
are isomorphic, namely, ${\overline \varphi} = {\overline \varphi}_f \cdot \overline{f^{-1}}$.
\end{lemma}

This completes the proof of Theorem $\ref{thrm9}$.
\end{proof}

\begin{theorem}
\label{th10}
The mappings $\mathcal H$ and $\mathcal E$ are mutually inverse.
\end{theorem}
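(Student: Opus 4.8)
The plan is to show that both composites $\mathcal{E} \circ \mathcal{H}$ and $\mathcal{H} \circ \mathcal{E}$ are identity maps. One direction is essentially the universality Theorem \ref{th5}; the other requires exhibiting, for an extension $\varphi\colon H \to G$, a concrete isomorphism of the outer exact product $H \odot M$ with $G$ carrying $f_0$ to $\varphi$. Throughout I use that $\mathcal{H}$ and $\mathcal{E}$ are well defined (Theorem \ref{thrm9}), so I may compute each composite on any convenient representative and with any convenient choices of complementary set and bijection.

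I first treat $\mathcal{H} \circ \mathcal{E}$. Let $M_H$ be a hypergroup over the group, so $\mathcal{E}$ produces the extension $f_0\colon H \to \overline{G} = H \odot M$. By Theorem \ref{th5}, with $\overline{H} = {\rm im}\, f_0$ and $\overline{M} = {\rm im}\, f_1$, the standardly constructed hypergroup $\overline{M}_{\overline{H}}$ is isomorphic to $M_H$ via $\overline{f} = (\overline{f}_0, \overline{f}_1)$. Since $\mathcal{H}$ is independent of the auxiliary choices, I compute it using precisely this complementary set $\overline{M}$ and the bijection $\overline{\varphi}_M = \overline{f}_1$. Then the hypergroup assigned by $\mathcal{H}$ is the unique one isomorphic to $\overline{M}_{\overline{H}}$ through $(\overline{f}_0, \overline{f}_1)$, which is $M_H$ itself. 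Hence $\mathcal{H}(\mathcal{E}([M_H])) = [M_H]$.

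For $\mathcal{E} \circ \mathcal{H}$, let $\varphi\colon H \to G$ be an extension, $\overline{H} = \varphi(H)$ its image, $\overline{M}$ a complementary set, and $\tilde{\varphi} = (\overline{\varphi}, \overline{\varphi}_M)$ the isomorphism of $M_H$ onto the standard hypergroup $\overline{M}_{\overline{H}}$. Then $\mathcal{E}$ yields $f_0\colon H \to H \odot M$, and I must produce an isomorphism $\mu\colon H \odot M \to G$ with $\mu \circ f_0 = \varphi$. I build $\mu$ as a composite. Because the multiplication (F2) is assembled entirely from the structural mappings, any morphism of hypergroups over the group induces a homomorphism of the associated outer exact products; applied to $\tilde{\varphi}$ this gives an isomorphism $\overline{\varphi} \times \overline{\varphi}_M\colon H \odot M \to \overline{H} \odot \overline{M}$. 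Next, the map $\nu\colon \overline{H} \odot \overline{M} \to G$, $hm \mapsto h \cdot m$, is bijective since $\overline{M}$ is complementary to $\overline{H}$, and it is a homomorphism because on a standardly constructed hypergroup the formula (F2) reproduces, through (F1), the ambient product of $G$; thus $\nu$ is an isomorphism. Put $\mu = \nu \circ (\overline{\varphi} \times \overline{\varphi}_M)$.

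It remains to check the intertwining. Since $f_0(\alpha) = (\alpha \cdot \theta)o$ with $\theta = (o,o)^{-1}$, the isomorphism $\tilde{\varphi}$ sends $o$ to the left neutral element $\overline{o} = \overline{\varphi}_M(o)$ of $(\overline{M}, \overline{\Xi})$ and $\theta$ to $\theta_{\overline{M}} = (\overline{o}, \overline{o})^{-1}$, so $\mu(f_0(\alpha)) = \varphi(\alpha) \cdot \theta_{\overline{M}} \cdot \overline{o}$ computed inside $G$. By the decomposition of the identity established in the proof of Theorem \ref{th2}, namely $\theta_{\overline{M}} \cdot \overline{o} = e_G$, this equals $\varphi(\alpha)$. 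Hence $\mu \circ f_0 = \varphi$, the extensions $f_0$ and $\varphi$ are isomorphic, and $\mathcal{E}(\mathcal{H}([\varphi])) = [\varphi]$. The main obstacle is the middle step of this second part: verifying that the outer exact product of a standardly constructed hypergroup is canonically isomorphic to $G$ via $hm \mapsto h \cdot m$. This is the converse reading of Theorem \ref{th5} and amounts to recognizing the associativity computation of Lemma \ref{lm2} as the statement that (F2) coincides with the product inherited from $G$.
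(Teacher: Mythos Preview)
Your proof is correct and follows essentially the same route as the paper. For $\mathcal{H}\circ\mathcal{E}$ you invoke Theorem~\ref{th5} exactly as the paper does; for $\mathcal{E}\circ\mathcal{H}$ your isomorphism $\mu=\nu\circ(\overline\varphi\times\overline\varphi_M)$ is precisely the paper's map $\pi\colon \alpha a\mapsto \phi(\alpha)\cdot\phi'(a)$ of Lemma~\ref{lm13}, and your final identity $\theta_{\overline M}\cdot\overline o=e_G$ is the paper's use of $\theta'_\varphi\cdot o'_\varphi=\varepsilon$. The only cosmetic difference is that you factor $\pi$ through the intermediate exact product $\overline H\odot\overline M$ and appeal to functoriality of (F2), whereas the paper performs the homomorphism check for $\pi$ in one direct computation.
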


\begin{proof}
Let $M_H$ be a hypergroup over the group,
${\overline \varphi} = f_0: H \rightarrow {\overline G} = H \odot M$
be the corresponding monomorphism of groups.
Then the mapping $\mathcal E$ sends the orbit of $M_H$
with respect of the action $\mathcal A$
to the class of extensions isomorphic to $\overline \varphi$.

Let us consider the image $\overline H$ of $f_0$ and
 the coresrtiction ${\overline f}_0: H \rightarrow {\overline H}$.
 Similarly, let  $\overline M$ be the image of the injection
 $f_1: M \rightarrow {\overline G}$ and
 ${\overline f}_1: M \rightarrow {\overline M}$ be
 the corresponding corestriction.
 Then $\overline M$ is a complementary set to the subgroup
 $\overline H$ of the group $\overline G$
 and we have an isomorphism of hypergroups over the group:
 $$
 {\overline f} = ({\overline f}_0, {\overline f}_1): M_H \rightarrow {\overline M}_{\overline H}.
 $$
 Therefore the image of a class of extensions isomorphic to $\overline \varphi$
 with respect to the mapping $\mathcal H$ is the orbit of the hypergroup $M_H$
 under the action $\mathcal A$ of the group $\mathcal G$.

  Thus, we proved that the  composite ${\mathcal E} \cdot {\mathcal H}$
 coincides with the identical mapping of the set $Hg(M, H) \slash {\mathcal A}$.
 Now we will prove that the  composite ${\mathcal H} \cdot {\mathcal E}$
 coincides with the identical mapping of the set $Ext_m(H)$.

 Let $\varphi: H \rightarrow G$ be an arbitrary extension of the group $H$,
 $H' = \varphi (H)$, $M'$ be a complementary set
 to the subgroup $H'$ of the group $G$
 and $M'_{H'}$ be the associated hypergroup over the group,
 $\Omega'_\varphi = (\Phi'_\varphi, \Psi'_\varphi, \Xi'_\varphi, \Lambda'_\varphi)$
 be its system of structural mappings.

 Let $\phi':M \rightarrow M'$ be an arbitrary bijection,
 $\phi: H \rightarrow H'$ be the corestriction of $\varphi$,
 i.e., $\phi (\alpha) = \varphi (\alpha)$ for any $\alpha \in H$.
 Set ${\tilde \phi} = (\phi, \phi')$ and let
 $(M_H)_{\tilde \phi}$ be a hypergroup with a basic set $M$ over the group $H$,
 having a system of structural mappings
 $\Omega_{\tilde \phi} = (\Phi_{\tilde \phi}, \Psi_{\tilde \phi}, \Xi_{\tilde \phi}, \Lambda_{\tilde \phi})$,
 uniquely defined by the conditions
 $$
 \Phi_{\tilde \phi} \cdot \phi' = (\phi', \phi) \cdot \Phi'_\varphi, \quad
 \Psi_{\tilde \phi} \cdot \phi = (\phi', \phi) \cdot \Psi'_\varphi,
 $$
 $$
 \Xi_{\tilde \phi} \cdot \phi' = (\phi', \phi') \cdot \Xi'_\varphi, \quad
 \Lambda_{\tilde \phi} \cdot \phi = (\phi', \phi') \cdot \Lambda'_\varphi.
 $$
 Then, the orbit of  $(M_H)_{\tilde \phi}$ under the action $\mathcal A$
 is the image of the isomorphic class of the extension $\varphi$
 with respect to the mapping $\mathcal H$.

 The image of the orbit of  $(M_H)_{\tilde \phi}$ under the action
 $\mathcal A$  with respect to the mapping $\mathcal E$ is the
 isomorphic class of the group extension
 $f_0: H \rightarrow {\overline G}_{\tilde \phi}$,
 where the outer exact product
 ${\overline G}_{\tilde \phi} = (H \odot M)_{\tilde \phi}$
 is a group with a basic set $H \times M$ and with the binary operation
 $$
 (\alpha a \cdot \beta b)_{\tilde \phi} = (\alpha \cdot \Psi_{\tilde \phi}(a, \beta) \cdot
 \Lambda_{\tilde \phi}(\Phi_{\tilde \phi}(a, \beta), b)) \,
 \Xi_{\tilde \phi}(\Phi_{\tilde \phi}(a, \beta), b).
 $$
 The monomorphism $f_0$ is defined by the formula
 $$
 f_0 (\alpha) = (\alpha \cdot \theta_{\tilde \phi}) o_{\tilde \phi},
 $$
 where the neutral element $o_{\tilde \phi}$ of the right quasigroup
 $(M, \Xi_{\tilde \phi})$ is equal to the preimage of the neutral element
 $o'_\varphi$ of the right quasigroup $(M', \Xi'_\varphi)$ with respect
 to the map $\phi'$, and
 $\theta_{\tilde \phi} = \Lambda_{\tilde \phi}(o_{\tilde \phi}, o_{\tilde \phi})^{-1}$.

\begin{lemma}
\label{lm13}
Consider the map of sets
$$
\pi: {\overline G}_{\tilde \phi} = (H \odot M)_{\tilde \phi} \rightarrow G, \quad
\alpha a \mapsto \phi (\alpha) \cdot \phi'(a).
$$
This map determines an isomorphism of groups and $f_0 \cdot \pi = \varphi$.
\end{lemma}

\begin{proof}
First, $\pi$ is a homomorphism since
$$
\pi (\alpha a \cdot \beta b)_{\tilde \phi} =
\pi ((\alpha \cdot \Psi_{\tilde \phi}(a, \beta) \cdot
 \Lambda_{\tilde \phi}(\Phi_{\tilde \phi}(a, \beta), b)) \,
 \Xi_{\tilde \phi}(\Phi_{\tilde \phi}(a, \beta), b)) = \\
 $$
 $$
 = \phi (\alpha \cdot \Psi_{\tilde \phi}(a, \beta) \cdot
 \Lambda_{\tilde \phi}(\Phi_{\tilde \phi}(a, \beta), b)) \cdot
 \phi'(\Xi_{\tilde \phi}(\Phi_{\tilde \phi}(a, \beta), b)) = \\
$$
$$
= \phi (\alpha) \cdot \phi (\Psi_{\tilde \phi}(a, \beta)) \cdot
\phi (\Lambda_{\tilde \phi}(\Phi_{\tilde \phi}(a, \beta), b))) \cdot
 \phi'(\Xi_{\tilde \phi}(\Phi_{\tilde \phi}(a, \beta), b)) =
$$
$$
= \phi (\alpha) \cdot \Psi'_\varphi (\phi' (a), \phi (\beta)) \cdot
\Lambda'_\varphi (\Phi'_\varphi(\phi' (a), \phi (\beta)), \phi'(b)) \cdot
$$
$$
\Xi'_\varphi(\Phi'_\varphi (\phi' (a), \phi (\beta)), \phi'(b)) =
\phi (\alpha) \cdot ({^{\phi'(a)}}\phi (\beta))_\varphi \cdot
(\phi'(a)^{\phi (\beta)})'_\varphi \cdot \phi' (b) =
$$
$$
= \phi (\alpha) \cdot \phi' (a) \cdot \phi (\beta) \cdot \phi' (b) =
\pi (\alpha a) \cdot \pi (\beta b).
$$

Moreover, $\pi$ has an inverse map $\pi': G \rightarrow {\overline G}_{\tilde \varphi}$.
Indeed, for any element $x \in G$ there are unique elements $\alpha' \in H' = \varphi (H)$ and
$a' \in M'$ such that $x = \alpha' \cdot a'$. Set $\pi'(x) = (\phi^{-1}(\alpha'), (\phi')^{-1}(a'))$.
It is not difficult to check that $\pi' = \pi^{-1}$.
Consequently, $\pi$ is an isomorphism.

Second, for any $\alpha \in H$ we have
$$
(f_0 \cdot \pi) (\alpha) = \pi (f_0 (\alpha)) =
\pi ((\alpha \cdot \theta_{\tilde \phi}) o_{\tilde \phi}) =
\phi (\alpha \cdot \theta_{\tilde \phi}) \cdot \phi'(o_{\tilde \phi}) = \\
$$
$$
= \phi (\alpha) \cdot \phi (\theta_{\tilde \phi}) \cdot \phi'(o_{\tilde \phi})
= \varphi (\alpha) \cdot \theta'_\varphi \cdot o'_\varphi = \varphi(\alpha).
$$
Here we use that
$$
\phi (\theta_{\tilde \phi}) = \phi (\Lambda_{\tilde \phi}(o_{\tilde \phi}, o_{\tilde \phi})) =
\Lambda'_\varphi(\phi'(o_{\tilde \phi}), \phi'(o_{\tilde \phi})) =
\Lambda'_\varphi(o'_\varphi, o'_\varphi)) = \theta'_\varphi.
$$
The lemma is proved.
\end{proof}

Thus, we proved that
the composite of mappings ${\mathcal H} \cdot {\mathcal E}$ is equal to the identical mapping
of the set $Ext_m (H)$,
and, consequently, we have proved the main Theorem $\ref{th10}$.
\end{proof}

 In the last two sections,  by imposing various restrictions on hypergroups over the group
and  groups extensions, we obtain some particular cases of the main theorem,
including Schreier's  theorem of 1926 (see $\cite{S1}, \cite{S2}$ and also $\cite{K}, \cite{R}$).


\section{Hypergroups over the group with a trivial action $\Phi$
and normal extensions of groups}

Let $M_H$ be a hypergroup over the group
with a system of structural mappings
$\Omega = (\Phi, \Psi, \Lambda, \Xi)$ and
$G = H \odot M$ be the associated
(outer) exact product.

\begin{definition}
We say that $M_H$ is a hypergroup with a trivial
action $\Phi$ of the group $H$ on $M$, if
$a^\alpha = a$ for any $a \in M$ and $\alpha \in H$.
Denote by $Hg_0(M, H)$ the subset of $Hg(M, H)$
consisting of all hypergroups over the group with a
trivial action $\Phi $ of the group $H$ on $M$.
\end{definition}

\begin{proposition}
\label{prop6}
Let $M_H$ be a hypergroup over the group
with a trivial action $\Phi$ of the group $H$ on $M$.
Then, we can assume that for such a
hypergroup over the group
the system of structural mappings is
$\Omega = (\Psi, \Xi, \Lambda)$,
and conditions $P1 - P4$
(of the definition of a hypergroup over the group)
take the following form.

$(A)$ The structural mapping $\Xi$
determines on $M$ a structure
of a group with a neutral element $o$
and with an inverse to $a \in M$ element $a^{[-1]}$.

$(B)$ The structural mapping $\Psi$
determines a mapping
$$
{\underline \Psi}: M \rightarrow Aut H, \quad
a \rightarrow {\underline \Psi}_a, \quad
{\underline \Psi}_a (\alpha) = \Psi (a, \alpha) = {^a}\alpha,
$$
which satisfies the condition
$$
{^{[a, b]}}\alpha  = (a, b)^{-1} \cdot {^a}({^b}\alpha) \cdot (a, b).
$$

$(C)$ The structural mapping $\Lambda$
satisfies the condition
$$
(a, b) \cdot ([a, b], c) = {^a}(b, \, c) \cdot (a, [b, c]).
$$
\end{proposition}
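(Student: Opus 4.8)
The plan is to reduce everything to the already-proved identities A1--A5 by substituting the hypothesis $a^\alpha = a$, and to read off (A), (B), (C) as exactly the surviving content. Since $\Phi$ is completely determined by this hypothesis it carries no information and may be deleted from $\Omega$, leaving the triple $(\Psi, \Xi, \Lambda)$; condition P2 then becomes vacuous, and I shall show that P3 is absorbed into (B).

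First I would invoke the universality principle (Theorem \ref{th5}): it suffices to verify the claim for a hypergroup arising from the standard construction, i.e. $G = H \odot M$ with $H$ a subgroup and $M$ a transversal. Here the triviality of $\Phi$ acquires a transparent meaning: by (F1), $a \cdot \alpha = {^a}\alpha \cdot a^\alpha = {^a}\alpha \cdot a$, so ${^a}\alpha = a\,\alpha\,a^{-1}$ lies in $H$ for every $a \in M$ and $\alpha \in H$. As $H$ and $M$ together generate $G$, this forces $H$ to be a normal subgroup of $G$, and $\underline{\Psi}_a(\alpha) = {^a}\alpha$ is simply conjugation by $a$, hence an automorphism of $H$. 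This is exactly the map $\underline{\Psi}: M \to \mathrm{Aut}\,H$ of (B); and since each such $\underline{\Psi}_a$ is already onto $H$, the surjectivity requirement P3 on $\Psi$ becomes automatic.

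To obtain (A) I would observe that, with $a^{(b,c)} = a$, identity (A4) collapses to $[[a,b],c] = [a,[b,c]]$, i.e. $\Xi$ is associative. Together with P1(i) (right quasigroup) and P1(ii) (left neutral $o$), associativity makes $(M,\Xi)$ a group---precisely the elementary fact already used in the proof of Theorem \ref{th4} that an associative right quasigroup with a left neutral element is a group---which supplies the neutral element $o$ and the two-sided inverses $a^{[-1]}$ demanded by (A). Conditions (B) and (C) are then pure transcriptions: putting $a^{{^b}\alpha} = a$ and $b^\alpha = b$ in (A3) turns $(a,b)\cdot {^{[a,b]}}\alpha = {^a}({^b}\alpha)\cdot(a^{{^b}\alpha}, b^\alpha)$ into $(a,b)\cdot{^{[a,b]}}\alpha = {^a}({^b}\alpha)\cdot(a,b)$, which is the conjugation formula of (B); while putting $a^{(b,c)} = a$ in (A5) yields $(a,b)\cdot([a,b],c) = {^a}(b,c)\cdot(a,[b,c])$, which is (C). I would also note that (A2) degenerates to the tautology $[a,b] = [a,b]$ and disappears, and that (A1) becomes ${^a}(\alpha\beta) = {^a}\alpha\cdot{^a}\beta$, confirming that each $\underline{\Psi}_a$ is a homomorphism, consistent with (B).

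The only genuinely non-formal point is the claim in (B) that $\underline{\Psi}_a$ is an \emph{automorphism} and not a mere endomorphism; this is where I expect the sole subtlety to sit. Through the standard-construction model it is free, since $\underline{\Psi}_a$ is conjugation, and the universality principle is exactly what lets me discharge it without extra work. For a reader preferring a direct argument from the axioms, I would instead combine (A3) at $b = a^{[-1]}$ with the identity ${^o}\alpha = \theta^{-1}\alpha\theta$ of (A8): since $[a^{[-1]},a] = [a,a^{[-1]}] = o$, this exhibits both $\underline{\Psi}_a\circ\underline{\Psi}_{a^{[-1]}}$ and $\underline{\Psi}_{a^{[-1]}}\circ\underline{\Psi}_a$ as inner automorphisms of $H$, whence $\underline{\Psi}_a$ is simultaneously injective and surjective, i.e. an automorphism.
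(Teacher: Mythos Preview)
Your proof is correct and aligns closely with the paper's: both arguments amount to substituting $a^\alpha = a$ into the axiom list and reading off (A) from P1 + the collapsed (A4), (C) from the collapsed (A5), and the conjugation identity of (B) from the collapsed (A3), with (A2) and P2 vanishing and (A1) supplying the homomorphism property of $\underline\Psi_a$.

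The one substantive difference is how the \emph{automorphism} assertion in (B) is handled. The paper does not invoke universality here; instead it isolates the invertibility of $\underline\Psi_a$ as a separate Lemma (Lemma~\ref{lm14}), proved directly from the reduced (A3) in the functional form $\underline\Psi_{[a,b]} = \underline\Psi_b\cdot\underline\Psi_a\cdot C_{(a,b)}$ by substituting $a=b^{[-1]}$ and $b=a^{[-1]}$ to produce right and left inverses. Your alternative direct argument at the end is exactly this lemma. Your primary route---appealing to Theorem~\ref{th5} so that $\underline\Psi_a$ is visibly conjugation in the model $G=H\odot M$---is a legitimate shortcut that trades the small computation of Lemma~\ref{lm14} for the heavier machinery already established; it also makes the normality of $H$ and the absorption of P3 more transparent. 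Either route works; the paper's choice keeps Section~6 self-contained from the axioms, while yours exploits the structure theorem to avoid a lemma.
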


\begin{proof}
First of all, condition $P2$ and
identity $(A2)$ of condition $P4$
together with structural mapping $\Phi$
become trivial and disappear.

Identity $(A4)$ of conditions $P4$ is
reduced to the associative law.
This law together with condition $P1$ is
equivalent to condition $(A)$,
since a group is exactly an associative
right quasigroup with a left neutral element.

Identity $(A5)$ is reduced to the identity
of condition $(C)$.

Now for an arbitrary $a \in M$, let us consider the map
$$
{\underline \Psi}_a: H \rightarrow H, \quad
{\underline \Psi}_a (\alpha) = \Psi (a, \alpha) = {^a}\alpha.
$$
Identity $(A1)$ is reduced to the condition
that any map of this form is a  homomorphism.
In Lemma $\ref{lm14}$ below  we prove
that any ${\underline \Psi}_a$ is invertible and,
consequently, is an automorphism.
Finally, identity $(A3)$ is reduced
to the identity of condition $(B)$.

Note that this condition contains not
only identities $(A1)$ and $(A3)$,
but also condition $P3$, since
any automorphism is a surjective map.
Proposition $\ref{prop6}$ is proved.
\end{proof}

\begin{lemma}
\label{lm14}
Let $M_H$ be a hypergroup over the group
with a trivial action $\Phi$ of the group $H$ on $M$.
Then, for any $a \in M$, ${\underline \Psi}_a$
is an invertible map.
\end{lemma}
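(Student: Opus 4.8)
The plan is to exploit two things at once: that $\Xi$ already endows $M$ with a genuine group structure, and that the cocycle identity (B) exhibits $\underline{\Psi}_{[a,b]}$ as a conjugation-twisted composite of $\underline{\Psi}_a$ and $\underline{\Psi}_b$. Specializing to inverse pairs in $(M,\Xi)$ will then sandwich $\underline{\Psi}_a$ between bijections. First I would record the consequences of the triviality of $\Phi$. Putting $a^\alpha = a$ in (A1) gives ${^a}(\alpha\cdot\beta) = {^a}\alpha\cdot{^a}\beta$, so each $\underline{\Psi}_a$ is at least a group homomorphism $H\to H$ and only its bijectivity remains in question. Moreover, the identity ${^o}\alpha = \theta^{-1}\cdot\alpha\cdot\theta$ established in the proof of Theorem \ref{th2} (cf.\ (A8)) shows that $\underline{\Psi}_o$ is conjugation by $\theta$, in particular a bijection of $H$.

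Next I would read (B) as an equation between self-maps of $H$. Writing $\gamma_g$ for the conjugation $\alpha\mapsto g^{-1}\cdot\alpha\cdot g$, the identity ${^{[a,b]}}\alpha = (a,b)^{-1}\cdot {^a}({^b}\alpha)\cdot(a,b)$ says precisely that
\[
\underline{\Psi}_{[a,b]} = \gamma_{(a,b)}\circ\underline{\Psi}_a\circ\underline{\Psi}_b ,
\]
where $\gamma_{(a,b)}$ is itself a bijection of $H$.

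Now comes the main step. Since P1 together with the associativity identity (A4) already makes $(M,\Xi)$ a group (this is exactly the reduction used for part (A) of Proposition \ref{prop6}, and it does not rely on the present lemma), every $a\in M$ has a two-sided $\Xi$-inverse $a^{[-1]}$ with $[a,a^{[-1]}] = [a^{[-1]},a] = o$. Taking $b = a^{[-1]}$ in the displayed formula gives $\underline{\Psi}_o = \gamma_{(a,a^{[-1]})}\circ\underline{\Psi}_a\circ\underline{\Psi}_{a^{[-1]}}$; as the left-hand side and $\gamma_{(a,a^{[-1]})}$ are bijections, the composite $\underline{\Psi}_a\circ\underline{\Psi}_{a^{[-1]}}$ is a bijection, so $\underline{\Psi}_a$ is surjective. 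Taking instead the ordered pair $(a^{[-1]},a)$, for which $[a^{[-1]},a]=o$, shows in the same way that $\underline{\Psi}_{a^{[-1]}}\circ\underline{\Psi}_a$ is a bijection, so $\underline{\Psi}_a$ is injective. Hence $\underline{\Psi}_a$ is bijective, i.e.\ invertible, which is the assertion; combined with (A1) it is then an automorphism, as needed in Proposition \ref{prop6}(B).

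The only point needing care is the logical ordering: the argument invokes that $(M,\Xi)$ is a group before this lemma is proved, so I would emphasize that this group property comes solely from P1 and (A4) and is therefore available independently, avoiding any circularity. Beyond that, the proof reduces to the elementary observation that whenever a composite $u\circ v$ is bijective the outer factor $u$ is surjective and the inner factor $v$ injective, applied to the two factorizations of $\underline{\Psi}_o$ above.
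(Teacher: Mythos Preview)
Your proof is correct and follows essentially the same approach as the paper: rewrite identity (B) in functional form, use that $(M,\Xi)$ is already a group (from P1 and (A4)), and specialize to the inverse pairs $(a,a^{[-1]})$ and $(a^{[-1]},a)$ to sandwich $\underline{\Psi}_a$ between bijections. The paper phrases the conclusion as exhibiting explicit one-sided inverses of $\underline{\Psi}_a$, whereas you extract surjectivity and injectivity from the two composites, but this is only a cosmetic difference; your remark on the logical ordering is a useful clarification that the paper leaves implicit.
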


\begin{proof}
The identity of condition (B) has the functional form
$$
\hspace{30mm}
{\underline \Psi}_{[a. b]} = {\underline \Psi}_b \cdot {\underline \Psi}_a \cdot C_{(a, b)},
\hspace{25mm} (F6)
$$
where $C_\gamma$ means the conjugation
by element $\gamma$:
$$
C_\gamma (\alpha) = \gamma^{-1} \cdot \alpha \cdot \gamma.
$$

Since, according to $(A)$, $(M, \Xi)$ is a group,
 we have
$$
[a^{[-1]}, a] = o = [a, a^{[-1]}].
$$
Substituting  $a = b^{[-1]}$  in $(F5)$,  we get
$$
C_\theta = {\underline \Psi}_b \cdot {\underline \Psi}_{b^{[-1]}}\cdot C_{(b^{[-1]}, b)}.
$$
Hence, ${\underline \Psi}_b$ has a right inverse map
$$
{\underline \Psi}_b^{-1} =
{\underline \Psi_{b^{[-1]}}} \cdot C_{(b^{[-1]}, b) \cdot \theta^{-1}}.
$$
Similarly, substituting  $b = a^{[-1]}$  in $(F6)$,
we get that there exists also  a left inverse map.
\end{proof}

\begin{proposition}
\label{prop7}
For any hypergroup over the group $M_H$
with a trivial action $\Phi$ of the group $H$ on $M$
and for any element $f \kappa$ of $\mathcal G$,
the hypergroup over the group $(M_H)_{f \kappa}$
is with trivial action $\Phi_{f \kappa}$
of the group $H$ on $M$.

In the case of hypergroups over the group
with a trivial action $\Phi$ of the group $H$ on $M$,
formulas $(i)_f -m(iv)_f$ and $(i)_\kappa - (iv)_\kappa$
are as follows.
Relations $(i)_f$ and $(i)_\kappa$ become tautologies.
Relations $(ii)_f - (iv)_f$ preserve their form.
Formulas $(ii)_\kappa - (iv)_\kappa$ take
the following form:
\begin{enumerate}
\item[$\rm (ii)_{\kappa. 0}$] \quad
$({^ a}\alpha)_\kappa =
\kappa_a \cdot {^a}\alpha \cdot \kappa_a^{-1}$,
\item[$\rm (iii)_{\kappa, 0}$] \quad
$[a, \, b]_\kappa = [a, \, b]$,
\item[$\rm (iv)_{\kappa, 0}$] \quad
$(a, b)_\kappa =
\kappa_a \cdot {^a}\kappa_b \cdot (a, \, b) \cdot
\kappa_{[a,\, b]}^{-1}$.
\end{enumerate}
\end{proposition}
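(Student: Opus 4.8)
The plan is to reduce the whole statement to a single observation: triviality of the action $\Phi$ is preserved separately by each of the two elementary twists, the $\kappa$-twists and the $f$-twists. Since every element of $\mathcal G$ acts as a composite of such twists (Theorem \ref{the8}), triviality then propagates to $(M_H)_{f\kappa}$ for free. Once triviality is in hand, the simplified formulas $(ii)_{\kappa,0}$–$(iv)_{\kappa,0}$ are obtained by substituting $a^\alpha = a$ (equivalently $\Phi(a,\alpha)=a$) into the general formulas $(i)_\kappa$–$(iv)_\kappa$.

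First I would treat the $\kappa$-twist. Formula $(i)_\kappa$ reads $\Phi_\kappa = \Phi$, so if $\Phi$ is trivial then $\Phi_\kappa$ is trivial and $(i)_\kappa$ degenerates to the tautology $(a^\alpha)_\kappa = a^\alpha = a$. Feeding $a^\alpha = a$ into $(ii)_\kappa$ turns $\kappa_{a^\alpha}^{-1}$ into $\kappa_a^{-1}$, which is exactly $(ii)_{\kappa,0}$. In $(iii)_\kappa$ and $(iv)_\kappa$ the inner expression $\Phi(a,\kappa_b)=a^{\kappa_b}$ collapses to $a$, so $\Xi(\Phi(a,\kappa_b),b)=[a,b]$; substituting this into $(iii)_\kappa$ gives $(iii)_{\kappa,0}$, and into $(iv)_\kappa$ gives $(iv)_{\kappa,0}$.

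Next I would treat the $f$-twist. The mappings $(ii)_f$–$(iv)_f$ contain no occurrence of $\Phi$ in their definitions, so they keep their stated form verbatim. The only point requiring computation is $(i)_f$: from $\Phi_f = (f^{-1},1_H)\cdot\Phi\cdot f$ one gets $\Phi_f(a,\alpha)=f(f^{-1}(a)^\alpha)$, and triviality of $\Phi$ forces $f^{-1}(a)^\alpha = f^{-1}(a)$, hence $\Phi_f(a,\alpha)=f(f^{-1}(a))=a$. Thus $\Phi_f$ is again trivial and $(i)_f$ is a tautology.

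Combining the two steps, since $(M_H)_{f\kappa}$ is assembled from $M_H$ by composing twists of these two kinds and each keeps $\Phi$ trivial, the action $\Phi_{f\kappa}$ is trivial, as claimed. I do not anticipate a real obstacle, as the content is bookkeeping; the one place demanding a moment's care is the collapse $\Phi_f(a,\alpha)=a$ in $(i)_f$, where the map $f$ must be tracked through the conjugated composite rather than read off directly as in the $\kappa$-case.
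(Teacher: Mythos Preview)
Your proposal is correct and follows essentially the same approach as the paper: both arguments verify triviality separately for the $\kappa$-twist (via $\Phi_\kappa=\Phi$) and the $f$-twist (via $\Phi_f(a,\alpha)=f(f^{-1}(a)^\alpha)=f(f^{-1}(a))=a$), then invoke the composite structure $(M_H)_{f\kappa}=((M_H)_f)_\kappa$; the paper dismisses the simplified formulas as ``evident'' while you spell out the substitutions, but the content is identical.
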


\begin{proof}
If the action $\Phi$ is trivial, then for any
$a \in M, \, \alpha \in H$

($\kappa$) according to relation
$(i)_\kappa$ of Section 4
$$
\Phi_\kappa (a, \alpha) = \Phi (a, \alpha) = a;
$$

($f$) $f^{-1}(a)^\alpha = f^{-1}(a)$,
consequently, $f(\Phi(f^{-1}(a), \alpha)) = a$,
which means that $\Phi_f (a, \alpha) = a$.

Since $(M_H)_{f \kappa} = ((M_H)_f)_\kappa$,
the first part of Proposition $\ref{prop7}$ is proved.
The second part of this proposition is evident.
\end{proof}

\begin{corollary}
(a) An orbit from $Hg(H, M) \slash {\mathcal A}$
contains a hypergroup over the group
with a trivial action $\Phi$ of the group $H$ on $M$
if and only if all elements of this orbit
are hypergroups over the group
with a trivial action $\Phi$ of the group $H$ on $M$.

(b) The subset $Hg_0(M, H)$ of the set $Hg(M, H)$
is invariant under action $\mathcal A$
of the group $\mathcal G$.
Therefore, by restricting $\mathcal A$
on $Hg_0(M, H)$, we get an action
of $\mathcal G$ on  $Hg_0(M, H)$.
There is a natural embedding
$$
Hg_0(H. M) \slash {\mathcal A} \subset Hg(H. M) \slash {\mathcal A}.
$$
\end{corollary}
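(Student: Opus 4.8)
The plan is to derive both parts directly from Proposition~\ref{prop7}, which asserts that the action $\mathcal A$ carries every hypergroup over the group with trivial action $\Phi$ to a hypergroup over the group with trivial action. Combined with the fact, recorded at the end of Section~\ref{AG}, that two orbits sharing a common element coincide, this yields everything with essentially no computation.

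First I would prove part (a). The implication ``all elements of the orbit have trivial action $\Rightarrow$ some element does'' is immediate, since every orbit is nonempty. For the converse, suppose an orbit $O$ contains a hypergroup $N_H$ with trivial action $\Phi$. Because orbits with a common element coincide, $O$ is exactly the orbit of $N_H$, that is $O = \{ (N_H)_{\kappa f} : \kappa f \in \mathcal G \}$. By Proposition~\ref{prop7}, each $(N_H)_{\kappa f}$ has trivial action $\Phi_{\kappa f}$; hence every element of $O$ has trivial action, as required.

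Next I would establish part (b). Invariance of $Hg_0(M,H)$ under $\mathcal A$ is precisely the content of Proposition~\ref{prop7}: for $M_H \in Hg_0(M,H)$ and any $\kappa f \in \mathcal G$ we have $(M_H)_{\kappa f} \in Hg_0(M,H)$. Consequently the restriction of $\mathcal A$ to $Hg_0(M,H) \times \mathcal G$ lands in $Hg_0(M,H)$ and, inheriting properties (a) and (b) of Theorem~\ref{the8}, defines an action of $\mathcal G$ on $Hg_0(M,H)$. For the embedding of quotients I would send an orbit of the restricted action to the orbit of the full action containing it. By part (a), the full orbit of any $M_H \in Hg_0(M,H)$ is contained entirely in $Hg_0(M,H)$, so it coincides with the restricted orbit; thus the assignment is well defined, and injectivity follows because two elements of $Hg_0(M,H)$ lie in a common full orbit exactly when they lie in a common restricted orbit.

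The argument is entirely formal once Proposition~\ref{prop7} is available, so there is no genuine computational obstacle. The only point requiring care is the bookkeeping in (b): one must verify that the restricted orbit and the full orbit of a trivial-action hypergroup literally coincide as subsets, which is exactly what makes the natural map $Hg_0(M,H) \slash {\mathcal A} \to Hg(M,H) \slash {\mathcal A}$ an injection rather than merely a map.
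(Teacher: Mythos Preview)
Your proposal is correct and follows essentially the same approach as the paper: the corollary is stated immediately after Proposition~\ref{prop7} with no separate proof, so it is meant to be read as a direct consequence of that proposition, which is precisely how you derive it. Your added care in part (b)---checking that the restricted and full orbits of a trivial-action hypergroup coincide, so that the natural map on quotients is an injection---makes explicit what the paper leaves implicit.
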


A group extension $\varphi: H \rightarrow G$
is called $normal$, if
$H' = \varphi (H)$ is a normal subgroup of $G$.
Clearly, an extension, isomorphic to
a normal extension, is normal as well.
Therefore, a class of isomorphic extensions
contain a normal extension
if and only if all extensions of this class
are normal.
Let us denote by $Ext_{m, 0}(H)$ a subset of $Ext_m (H)$,
consisting of all classes of isomorphic normal extensions.

\begin{theorem}
\label{th11}
Let a hypergroup $M_H$ and a group extension $\varphi$
correspond to one another under mappings $\mathcal E$
and $\mathcal H$.
Then, $\varphi$ is a normal extension if and
only if $M_H$ is a hypergroup over the group
with a trivial action $\Phi$ of the group $H$ on $M$.
Therefore, the restrictions of the mappings
$\mathcal E$ and $\mathcal H$
on $Hg_0(M, H)$ and $Ext_{m, 0}(H)$,
respectively, determine  a bijection.
\end{theorem}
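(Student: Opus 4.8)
The plan is to carry out everything inside the standard-construction model furnished by Theorem~\ref{th5}. By the definition of $\mathcal E$ I may take the extension to be $\varphi = f_0 \colon H \to \overline G = H \odot M$, with image $\overline H = f_0(H)$ and complementary set $\overline M = f_1(M)$; normality of $\varphi$ then means exactly that $\overline H$ is a normal subgroup of $\overline G$. The two facts I will lean on are already proved: Lemma~\ref{lm5} gives $\overline\xi \cdot \overline x = \xi x$ for $\xi \in H$, $x \in M$, and Lemma~\ref{lm6} gives $\overline x \cdot \overline\alpha = \overline{{^x}\alpha}\cdot\overline{x^\alpha}$ for $\alpha \in H$, $x \in M$, the factors lying in $\overline H$ and $\overline M$ respectively.

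First I would establish the biconditional. Since $\overline G = \overline H \odot \overline M$, every element of $\overline G$ has the form $\overline\xi\cdot\overline x$, so $\overline H$ together with the transversal elements $\overline x$ ($x \in M$) generates $\overline G$; consequently $\overline H$ is normal if and only if each $\overline x$ normalizes it. For fixed $x$ and $\alpha$, Lemma~\ref{lm6} shows that $\overline x \cdot \overline\alpha = \overline{{^x}\alpha}\cdot\overline{x^\alpha}$ lies in the right coset $\overline H \cdot \overline{x^\alpha}$, and because $\overline M$ is a transversal to $\overline H$ this coset equals $\overline H \cdot \overline x$ if and only if $x^\alpha = x$. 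Hence $\overline x\,\overline\alpha\,\overline x^{-1} \in \overline H$ for every $\alpha$ precisely when $x^\alpha = x$ for every $\alpha$, and letting $x$ range over $M$ this is exactly the triviality of the action $\Phi$. In particular, if $\Phi$ is nontrivial, choosing $x_0,\alpha_0$ with $x_0^{\alpha_0}\neq x_0$ puts $\overline{x_0}\cdot\overline{\alpha_0}$ in $\overline H\cdot\overline{x_0^{\alpha_0}}\neq \overline H\cdot\overline{x_0}$, so $\overline{x_0}\,\overline{\alpha_0}\,\overline{x_0}^{-1}\notin\overline H$ and $\overline H$ fails to be normal.

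For the converse direction I must promote the inclusion $\overline x\,\overline H\,\overline x^{-1}\subseteq\overline H$ to an equality, so that each $\overline x$ truly lies in the normalizer $N(\overline H)$ rather than merely being sent into $\overline H$. This is the one delicate point, and it is exactly where the invertibility of $\underline\Psi_x$ from Lemma~\ref{lm14} enters: when $\Phi$ is trivial the conjugation $\overline\alpha\mapsto\overline x\,\overline\alpha\,\overline x^{-1}=\overline{{^x}\alpha}$ is the automorphism $\underline\Psi_x$ transported to $\overline H$, hence maps $\overline H$ bijectively onto $\overline H$. Thus $\overline x\in N(\overline H)$ for every $x\in M$, whence $N(\overline H)\supseteq\langle \overline H,\overline M\rangle=\overline G$ and $\overline H$ is normal. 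Together with the previous paragraph this proves: $\varphi$ is normal if and only if $M_H$ has trivial action $\Phi$.

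Finally I would assemble the bijection. Normality of an extension is an isomorphism invariant (as noted just before the statement), so $Ext_{m,0}(H)$ is a union of full isomorphism classes; by the Corollary to Proposition~\ref{prop7}, triviality of $\Phi$ is constant along each $\mathcal A$-orbit, so $Hg_0(M,H)/{\mathcal A}$ is a genuine subset of $Hg(M,H)/{\mathcal A}$. By the main Theorem~\ref{th10}, $\mathcal H$ and $\mathcal E$ are mutually inverse bijections between $Ext_m(H)$ and $Hg(M,H)/{\mathcal A}$, and the biconditional just established shows that they carry the subsets $Ext_{m,0}(H)$ and $Hg_0(M,H)/{\mathcal A}$ onto each other. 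Restricting the two mutually inverse maps therefore yields the asserted bijection. The only real obstacle is the normalizer step of the ``$\Phi$ trivial'' direction, where a coset inclusion must be upgraded to equality through Lemma~\ref{lm14}; everything else is bookkeeping resting on Theorems~\ref{th5} and~\ref{th10}.
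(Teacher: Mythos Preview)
Your proof is correct and follows essentially the same line as the paper's (very brief) argument: both reduce to the identity $a\cdot\alpha={^a}\alpha\cdot a^{\alpha}$ from (F1)/Lemma~\ref{lm6} inside the standard-construction model, and both obtain the bijection as an immediate restriction of Theorem~\ref{th10}. You are simply far more explicit than the paper, which compresses the whole biconditional into one sentence.

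One remark on your ``delicate point'': the detour through Lemma~\ref{lm14} is legitimate but unnecessary. Once you know $\overline{x}\,\overline{H}\,\overline{x}^{-1}\subseteq\overline{H}$ for every $x\in M$, the same inclusion follows for every $g\in\overline{G}$, since any $g$ factors as $\overline{\eta}\cdot\overline{y}$ with $\eta\in H$, $y\in M$, and then $g\,\overline{H}\,g^{-1}=\overline{\eta}\,(\overline{y}\,\overline{H}\,\overline{y}^{-1})\,\overline{\eta}^{-1}\subseteq\overline{\eta}\,\overline{H}\,\overline{\eta}^{-1}=\overline{H}$. Applying this to $g^{-1}$ gives the reverse inclusion, so equality and normality are automatic without appealing to the invertibility of $\underline{\Psi}_x$. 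This is presumably what the paper has in mind when it passes over the point silently.
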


\begin{proof}
The triviality of the structural mapping
$\Phi$ means that
$a \cdot \alpha = {^a}\alpha \cdot a$
for any $a \in M$ and $\alpha \in H$.
It means that the corresponding extension
$\varphi$ is normal.
The second part of Theorem $\ref{th11}$
is an immediate consequence
of Theorem $\ref{th10}$.
\end{proof}

Now, let $M_H, (M_H)' \in Hg_0(M, H)$ and
$\Omega = (\Psi, \Xi, \Lambda)$,
$\Omega' = (\Psi', \Xi', \Lambda)$
be their systems of structural mappings.
Then, there is an element $f \kappa \in {\mathcal G}$
such that $(M_H)' = (M_H)_{f \kappa}$
and, according to $\rm{(iii)}_{\kappa, 0}$
of Proposition 7
and $\rm{(iii)}_f$ of Section 4, we have
$\Xi' = \Xi_{f \kappa} = \Xi_f$,
that is the groups $(M, \Xi)$ and $(M, \Xi')$
are isomorphic.

Let $Hg_0 (H, M, [\Xi])$ be a subset of $Hg_0(H, M)$, consisting
of all hypergroups $(M_H)'$, having a structural mapping $\Xi'$,
isomorphic to a fixed binary (group) operation $\Xi$. This means that
$\Xi' = \Xi_f$ for some $f \in S_M$.
Then, $Hg_0 (H, M, [\Xi])$ is invariant under the action of $\mathcal G$,
the group $\mathcal G$ acts on $Hg_0 (H, M, [\Xi])$ and
$$
Hg_0 (H, M, [\Xi]) \slash {\mathcal A} \subset Hg_0 (H, M) \slash {\mathcal A}.
$$
Let us emphasize that for any $(M_H)' \in Hg_0 (H, M, [\Xi])$ we have that
$(M, \Xi')$ is a group, isomorphic to the group $(M, \Xi)$.

\begin{definition}
Let $H$ and $Q$ be arbitrary groups, $\varphi: H \rightarrow G$ be
a normal extension such that the quotient-group $G \slash \varphi (H)$
is isomorphic to $Q$. In this case, we say that
$G$ is an {\it extension of $H$ by $Q$} or, equivalently,
$G$ is a {\it coextension of $Q$ by $H$}.
\end{definition}

If a normal extension $\varphi: H \rightarrow G$ has the
quotient-group $G \slash \varphi (H)$, which is  isomorphic to $Q$,
obviously, any
normal extension $\varphi': H \rightarrow G'$, isomorphic to $\varphi$,
has the quotient-group $G \slash \varphi' (H)$, which is isomorphic to $Q$.
Therefore, we have a correctly defined subset $Ext(H, Q)$ of
$Ext_m(H)$, where $m = |Q|$, consisting of isomorphic classes of all
normal extensions $\varphi$ with the quotient-group isomorphic to $Q$,
that is consisting of isomorphic classes of all extensions of $H$ by $Q$.

\begin{theorem}
\label{th12}
By restriction of mappings $\mathcal E$ and $\mathcal H$
a bijection is established between the sets
$Hg_0(H, M, [\Xi]) \slash {\mathcal A}$ and
$Ext(H, Q)$, where $Q = (M, \Xi)$.
\end{theorem}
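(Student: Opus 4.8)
The plan is to build on Theorem~\ref{th11}, which already supplies a bijection between $Hg_0(M, H) \slash {\mathcal A}$ and $Ext_{m, 0}(H)$ realized by the restrictions of $\mathcal E$ and $\mathcal H$. Since $Hg_0(H, M, [\Xi]) \slash {\mathcal A}$ sits inside $Hg_0(M, H) \slash {\mathcal A}$ (it is $\mathcal A$-invariant, as noted just before the statement) and $Ext(H, Q)$ sits inside $Ext_{m, 0}(H)$, no new map need be constructed: it suffices to verify that the already available bijection carries the first subset exactly onto the second. Everything then reduces to a single identification.

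The crucial step is to relate the structural operation of a hypergroup in $Hg_0(M, H)$ to the quotient group of its associated extension. Let $M_H \in Hg_0(M, H)$ have operation $\Xi'$, and let $\varphi = f_0 \colon H \to \overline G = H \odot M$ be the associated extension, which is normal by Theorem~\ref{th11}; write $\overline H = \mathrm{im}\, f_0$ and let $\psi \colon \overline G \to \overline H \backslash \overline G$ be the quotient map. I would show that the composite $M \xrightarrow{f_1} \overline M \xrightarrow{\psi} \overline H \backslash \overline G$ is an isomorphism from $(M, \Xi')$ onto the quotient group. It is a bijection because $\overline M$ is a complementary set to $\overline H$. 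For the homomorphism property I would invoke Lemma~\ref{lm7}, which gives $\overline a \cdot \overline b = \overline{(a, b)} \cdot \overline{[a, b]}$ with $\overline{(a, b)} \in \overline H$ and $[a, b] = \Xi'(a, b)$; since $\overline H$ is normal, $\psi$ is a group homomorphism and $\psi(\overline{(a,b)})$ is the identity, so in the quotient group $\psi(\overline a) \cdot \psi(\overline b) = \psi(\overline{[a, b]}) = \psi(\overline{\Xi'(a, b)})$, which is exactly the required compatibility.

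With this identification in hand both inclusions follow at once. If $M_H \in Hg_0(H, M, [\Xi])$, then $\Xi'$ is isomorphic to $\Xi$, so the quotient group of $\varphi$ is isomorphic to $(M, \Xi') \cong (M, \Xi) = Q$; hence $\mathcal E$ sends the orbit of $M_H$ into $Ext(H, Q)$. Conversely, if $\varphi \colon H \to G$ lies in $Ext(H, Q)$, then $\mathcal H(\varphi) \in Hg_0(M, H)$ by Theorem~\ref{th11}, and applying the same isomorphism to a complementary set of $\varphi(H)$ in $G$ gives $(M, \Xi') \cong G \slash \varphi(H) \cong Q = (M, \Xi)$, so $\Xi' \cong \Xi$ and $\mathcal H(\varphi) \in Hg_0(H, M, [\Xi])$.

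The only genuinely nontrivial point — the main obstacle — is the quotient-group identification of the second paragraph; it is essentially the normal-subgroup case recorded in the Remark of Section~\ref{sec.2}, but it must be applied with care so that it respects passage to orbits and to the isomorphism class $[\Xi]$. Once it is established, the restricted maps are automatically mutually inverse on the two subsets, being restrictions of the mutually inverse maps of Theorem~\ref{th10}, and the bijection between $Hg_0(H, M, [\Xi]) \slash {\mathcal A}$ and $Ext(H, Q)$ follows.
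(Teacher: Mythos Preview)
Your proposal is correct and follows essentially the same approach as the paper: both restrict the bijection of Theorem~\ref{th11} (itself a restriction of Theorem~\ref{th10}) and verify that the two subsets correspond by identifying the quotient group of the associated extension with $(M,\Xi')$. You are in fact more explicit than the paper about this identification --- invoking Lemma~\ref{lm7} to check the homomorphism property --- whereas the paper simply asserts that the quotient group is isomorphic to $Q=(M,\Xi)$ and that the structural mapping $\Xi$ of the transversal hypergroup is isomorphic to the operation of $Q$.
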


\begin{proof}
Let  $(M_H)'$ be a representative of an orbit from
$Hg_0(H, M, [\Xi]) \slash {\mathcal A}$.
Then, the image of this orbit with respect
to $\mathcal E$
is the isomorphic class of the extension
$$
\varphi: H \rightarrow G = H \odot M, \quad
\alpha \mapsto (\alpha \cdot \theta)o.
$$
Since the considered orbit is from
$Hg_0(H, M, [\Xi]) \slash {\mathcal A}$,
the extension $\varphi$ is normal and the quotient-group
$G \slash {\varphi}(H)$ is isomorphic to $Q = (M, \Xi)$.
Consequently, the isomorphic class of $\varphi$ belongs
to $Ext(H, Q)$.

Conversely, suppose that the extension $\varphi: H \rightarrow G$
represents an element from $Ext (H, Q)$. Consider a transversal $M$
to the subgroup $\varphi (H)$ of $G$ and corresponding
hypergroup $M_{\varphi (H)}$ over the group. Then, the structural
mapping $\Xi$ of $M_{\varphi (H)}$ is isomorphic to the binary
operation of the group $Q$ and is isomorphic to the corresponding
structural mapping of ${\mathcal H} ({\varphi})$.
Therefore,
${\mathcal H} ({\varphi}) \in Hg_0 (H, M, [\Xi]) \slash {\mathcal A}$.
\end{proof}

In the sequel, we need a modification of this theorem, where
the set $Hg_0(H, M, [\Xi])$ is replaced by its subset
$Hg_0 (H, M, \Xi)$, consisting of all hypergroups
$M_H$, having the same structural
  mapping $\Xi$.

The following proposition immediately follows from the definition
of the group $\mathcal G$ (see Section 4,  Proposition 5).
\

\begin{proposition}
\label{pr8}
The subset $H^M \times \{ 1_M \}$
of the group $\mathcal G$
forms a subgroup of $\mathcal G$, which is
reverse isomorphic to the group $H^M$.
\end{proposition}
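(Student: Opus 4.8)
The plan is to restrict the group operation of $\mathcal{G}$, as computed in Proposition \ref{pr5}, to the subset $H^M \times \{ 1_M \}$, and to observe that it collapses into a law that reverses the operation $*$ on $H^M$. First I would specialize the formula $\kappa f \cdot \lambda g = ((f \cdot \lambda) * \kappa)(f \cdot g)$ to the case $f = g = 1_M$. Since $(1_M \cdot \lambda)_a = \lambda_{1_M(a)} = \lambda_a$ for every $a \in M$, we have $1_M \cdot \lambda = \lambda$, and trivially $1_M \cdot 1_M = 1_M$. Hence
$$
\kappa 1_M \cdot \lambda 1_M = (\lambda * \kappa)\, 1_M,
$$
so the product of two elements of $H^M \times \{1_M\}$ again lies in $H^M \times \{1_M\}$, which gives closure.

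Next I would read off the remaining subgroup axioms from this same specialization. The neutral element $\varepsilon^M 1_M$ of $\mathcal{G}$ lies in the subset, and by Proposition \ref{pr5} the inverse of $\kappa 1_M$ is $(1_M \cdot \kappa^{-1})\, 1_M = \kappa^{-1} 1_M$, which again belongs to $H^M \times \{1_M\}$. Together with closure this shows that $H^M \times \{1_M\}$ is a subgroup of $\mathcal{G}$.

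Finally, to identify this subgroup up to reverse isomorphism, I would consider the bijection $j \colon H^M \to H^M \times \{1_M\}$, $\kappa \mapsto \kappa 1_M$. The displayed product formula gives $j(\kappa) \cdot j(\lambda) = (\lambda * \kappa)\, 1_M = j(\lambda * \kappa)$, so $j$ interchanges the order of the factors; that is, $j$ is an anti-isomorphism, equivalently an isomorphism onto the opposite group $(H^M)^{\mathrm{op}}$, which is precisely the asserted reverse isomorphism. No step here presents a genuine obstacle: the whole content is the single evaluation $1_M \cdot \lambda = \lambda$, which makes the cross-term $(f \cdot \lambda)$ in the operation of $\mathcal{G}$ collapse and thereby produces the order reversal. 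The only thing to be careful about is not to overlook that it is the factor $\kappa$, not $\lambda$, that remains on the right in $(\lambda * \kappa)1_M$, since this is exactly what forces the isomorphism to be a reverse one rather than a direct one.
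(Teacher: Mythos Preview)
Your proof is correct and is exactly the unpacking of what the paper means when it says the proposition ``immediately follows from the definition of the group $\mathcal G$ (see Section 4, Proposition 5)'': you specialize the product formula to $f=g=1_M$, read off closure, identity, and inverses, and observe that $\kappa 1_M \cdot \lambda 1_M = (\lambda * \kappa)1_M$ forces the identification with $H^M$ to reverse the order of multiplication. There is nothing to add.
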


We denote the obtained subgroup of $\mathcal G$
by $(H^M)_{rev}$.

\begin{lemma}
\label{lm15}
The subset $Hg_0(H, M, \Xi)$ of $Hg(H, M)$
is closed under the action of the group $(H^M)_{rev}$.
By restricting $\mathcal A$ an action
$$
{\mathcal A}_o: (H^M)_{rev} \times Hg_0(H M, \Xi)
\rightarrow Hg_0(H, (M, \Xi)), \quad M_H \mapsto
(M_H)_\kappa
$$
of group $(H^M)_{rev}$ on $Hg_0(H M, \Xi)$
is determined.
\end{lemma}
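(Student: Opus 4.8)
The plan is to reduce the statement to Proposition~\ref{prop7}, using the fact already established in Theorem~\ref{th6} and Proposition~\ref{pr8} that $\kappa \mapsto (M_H)_\kappa$ defines an action of the subgroup $(H^M)_{rev}$ of $\mathcal G$ on all of $Hg(H, M)$. The only genuinely new content is that this action carries the subset $Hg_0(H, M, \Xi)$ into itself; once that is shown, the action axioms for $\mathcal A_o$ are inherited automatically.

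First I would verify closure. Let $M_H \in Hg_0(H, M, \Xi)$, so that $M_H$ has trivial action $\Phi$ and its $\Xi$-component equals the fixed group operation, and let $\kappa \in H^M$, viewed as the element $\kappa\, 1_M$ of $(H^M)_{rev}$. By Proposition~\ref{prop7} the twisted hypergroup $(M_H)_\kappa$ again has trivial action (relation $(i)_\kappa$ gives $\Phi_\kappa(a, \alpha) = \Phi(a, \alpha) = a$). Moreover, formula $(iii)_{\kappa, 0}$ of Proposition~\ref{prop7} yields $[a, b]_\kappa = [a, b]$ for all $a, b \in M$, so the $\Xi$-component of $(M_H)_\kappa$ is unchanged. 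Hence $(M_H)_\kappa \in Hg_0(H, M, \Xi)$, which is exactly the asserted invariance.

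It then remains to observe that the restriction is an action. Since $(H^M)_{rev}$ is a subgroup of $\mathcal G$ by Proposition~\ref{pr8} and $Hg_0(H, M, \Xi)$ is invariant under it, the restriction of $\mathcal A$ to $(H^M)_{rev} \times Hg_0(H, M, \Xi)$ inherits the two action axioms of Theorem~\ref{the8}. Explicitly, the neutral element $\varepsilon^M$ acts trivially by Theorem~\ref{th6}(b), and for $\kappa, \lambda \in H^M$ the identity $(M_H)_{\lambda * \kappa} = ((M_H)_\kappa)_\lambda$ of Theorem~\ref{th6}(a) is precisely compatibility with the group law of $(H^M)_{rev}$.

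The one point to keep track of is the order of composition: because $(H^M)_{rev}$ is reverse isomorphic to $H^M$, the product of $\kappa\,1_M$ and $\lambda\,1_M$ in $\mathcal G$ is $(\lambda * \kappa)\,1_M$, and it is the reverse-action form of Theorem~\ref{th6} that matches this. Thus there is no serious obstacle beyond the invariance of $\Xi$ under the $\kappa$-twist, and that is supplied directly by $(iii)_{\kappa, 0}$; no recomputation of the structural maps is needed.
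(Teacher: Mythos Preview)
Your proof is correct and follows essentially the same approach as the paper: the paper's proof is a one-line appeal to $(iii)_\kappa$ of Section~4 (which, under trivial $\Phi$, gives $\Xi_\kappa = \Xi$) and Proposition~\ref{pr8}, and you unpack exactly this, citing the specialized form $(iii)_{\kappa,0}$ from Proposition~\ref{prop7} and spelling out that the action axioms are inherited by restriction.
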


\begin{proof}
It directly follows from the item
${\rm (iii)}_\kappa$
of Section 4 and Proposition $\ref{pr8}$ above.
\end{proof}

\begin{proposition}
\label{prop 9}
There exists a canonical bijection
$$
\iota: Hg_0(H, M, \Xi) \slash (H^M)_{rev}
\rightarrow Hg_0(H, M, [\Xi]) \slash {\mathcal G},
$$
which sends the orbit of $M_H$ under the action
${\mathcal A}_0$ to the orbit of $M_H$ under the action
$\mathcal A$.
\end{proposition}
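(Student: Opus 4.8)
The plan is to exhibit $\iota$ as the map induced by the two inclusions $(H^M)_{rev}\hookrightarrow\mathcal G$ (Proposition~\ref{pr8}) and $Hg_0(H,M,\Xi)\hookrightarrow Hg_0(H,M,[\Xi])$, and then to check, in turn, that it is well defined, surjective and injective.

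First I would settle well-definedness. By Lemma~\ref{lm15} the action $\mathcal A_0$ is simply the restriction of $\mathcal A$ to the subgroup $(H^M)_{rev}$ and to the invariant subset $Hg_0(H,M,\Xi)$. Hence each $\mathcal A_0$-orbit is contained in a single $\mathcal A$-orbit, and sending the $\mathcal A_0$-orbit of $M_H$ to the $\mathcal A$-orbit of $M_H$ is independent of the representative. This defines $\iota$.

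Next I would prove surjectivity, namely that every $\mathcal A$-orbit of $Hg_0(H,M,[\Xi])$ meets $Hg_0(H,M,\Xi)$. Given $N_H\in Hg_0(H,M,[\Xi])$ with structural operation $\Xi'$, the definition of $[\Xi]$ provides $f\in S_M$ with $\Xi'=\Xi_f$. Applying the element $\varepsilon^M f^{-1}\in\mathcal G$ and using the composition rule of Theorem~\ref{thm7a} together with the transformation formula $(iii)_f$ of Section~4, the operation of $(N_H)_{f^{-1}}$ is $(\Xi_f)_{f^{-1}}=\Xi$, so $(N_H)_{f^{-1}}\in Hg_0(H,M,\Xi)$. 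Thus the $\mathcal A$-orbit of $N_H$ is $\iota$ of the $\mathcal A_0$-orbit of $(N_H)_{f^{-1}}$, and $\iota$ is onto.

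The remaining step, injectivity, is the one I expect to be the main obstacle. Suppose $M_H,N_H\in Hg_0(H,M,\Xi)$ lie in one $\mathcal A$-orbit, say $N_H=(M_H)_{\kappa f}=((M_H)_\kappa)_f$ by Theorem~\ref{the8}. Item $\mathrm{(iii)}_{\kappa,0}$ of Proposition~\ref{prop7} shows that the factor $\kappa$ fixes $\Xi$, while $f$ carries it to $\Xi_f$; since $M_H$ and $N_H$ share the operation $\Xi$, this forces $\Xi_f=\Xi$, i.e.\ $f\in Aut(M,\Xi)$. Equivalently, the stabiliser of $\Xi$ in $\mathcal G$ is exactly $\{\kappa f:\ f\in Aut(M,\Xi)\}$, and the whole content of injectivity is that this stabiliser does not enlarge the $(H^M)_{rev}$-orbits of $Hg_0(H,M,\Xi)$: one must produce $\kappa'\in H^M$ with $((M_H)_\kappa)_f=(M_H)_{\kappa'}$, so that $N_H$ and $M_H$ already lie in one $\mathcal A_0$-orbit. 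The hard part will be precisely this absorption of the $Aut(M,\Xi)$-factor; I would attack it by propagating $f$ through the explicit formulas $\mathrm{(ii)}_{\kappa,0}$--$\mathrm{(iv)}_{\kappa,0}$ of Proposition~\ref{prop7} and $(ii)_f$--$(iv)_f$ of Section~4 for $\Psi$ and $\Lambda$, and by invoking the commutation relation between $\mathcal A$ and $\mathcal A_0$ recorded in Lemma~\ref{lem11a}. Care is genuinely needed here, since a priori an automorphism of $(M,\Xi)$ may act nontrivially on $Hg_0(H,M,\Xi)/(H^M)_{rev}$, and it is exactly the vanishing of that action that has to be established. Once it is, $\iota$ is the desired bijection, compatible with the identification of $Hg_0(H,M,[\Xi])/\mathcal A$ with $Ext(H,Q)$, $Q=(M,\Xi)$, from Theorem~\ref{th12}.
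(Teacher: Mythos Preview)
Your decomposition into well-definedness, surjectivity, and injectivity is exactly right, and it is more than the paper itself offers (the paper's ``proof'' is a one-line assertion). You are also right to flag injectivity as the crux: the whole content is that an element $\kappa f\in\mathcal G$ carrying one hypergroup in $Hg_0(H,M,\Xi)$ to another must have its effect reproduced by some $\kappa'\in(H^M)_{rev}$ alone. But this ``absorption of the $Aut(M,\Xi)$-factor'' is not merely hard---it fails outright, so the proposed strategy cannot be completed and the statement as written is false.

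Here is a concrete obstruction. Take $H=\mathbb{Z}/3$ (abelian) and $(M,\Xi)=\mathbb{Z}/2\times\mathbb{Z}/2$, with $\Lambda$ trivial and $\underline{\Psi}:Q\to Aut(H)\cong\mathbb{Z}/2$ the projection onto the first $\mathbb{Z}/2$ factor. This gives a legitimate $M_H\in Hg_0(H,M,\Xi)$ (the associated exact product is $S_3\times\mathbb{Z}/2$). Let $f\in Aut(M,\Xi)$ swap the two $\mathbb{Z}/2$ factors. Then $(M_H)_f\in Hg_0(H,M,\Xi)$ as well, and $M_H$, $(M_H)_f$ lie in one $\mathcal G$-orbit. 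But they lie in distinct $(H^M)_{rev}$-orbits: since $H$ is abelian, formula $(ii)_{\kappa,0}$ gives $\Psi_\kappa=\Psi$ for every $\kappa$, whereas $\Psi_f\neq\Psi$ (the two projections $Q\to\mathbb{Z}/2$ are different maps). Hence $\iota$ is not injective. In other words, the stabiliser of $\Xi$ in $\mathcal G$ genuinely enlarges the $(H^M)_{rev}$-orbits, exactly the possibility you were worried about. The underlying reason is that $Hg_0(H,M,\Xi)/(H^M)_{rev}$ classifies extensions up to isomorphism over $H$ \emph{and} under $Q$ (the classical Schreier equivalence), while $Hg_0(H,M,[\Xi])/\mathcal G$ uses the paper's weaker equivalence (over $H$ only); these differ precisely by the action of $Aut(Q)$.
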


\begin{proof}
The mapping $\iota$ is correctly defined and is
injective and surjective according to definitions
of the actions $\mathcal A$ and ${\mathcal A}_0$.
\end{proof}

Using this proposition, we get a modification of Theorem 12.

\begin{theorem}
\label{th13}
The composite $\iota \cdot {\mathcal E}$
determines a  bijection between the sets
$Hg_0(H, M, \Xi) \slash (H^M)_{rev}$ and
$Ext(H, Q)$, where $Q = (M, \Xi)$.
\end{theorem}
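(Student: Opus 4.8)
The plan is to recognize this statement as a purely formal consequence of two bijections already in hand: the canonical bijection $\iota$ of Proposition~\ref{prop 9} and the bijection furnished by the restriction of $\mathcal E$ in Theorem~\ref{th12}. The only point needing attention is that these two maps are composable, i.e. that the codomain of $\iota$ is the domain of the restricted $\mathcal E$.

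First I would observe that the orbit space serving as codomain of $\iota$, written $Hg_0(H, M, [\Xi]) \slash {\mathcal G}$, is by definition the same object as $Hg_0(H, M, [\Xi]) \slash {\mathcal A}$, since $\mathcal A$ is exactly the action of the group $\mathcal G$ on $Hg(M, H)$ restricted to the $\mathcal G$-invariant subset $Hg_0(H, M, [\Xi])$ (the invariance having been established in the discussion preceding Theorem~\ref{th12}). Hence the codomain of $\iota$ coincides with the domain of the restricted $\mathcal E$ appearing in Theorem~\ref{th12}, and the composite
$$
\iota \cdot {\mathcal E}: \,\, Hg_0(H, M, \Xi) \slash (H^M)_{rev} \longrightarrow Ext(H, Q)
$$
is well defined.

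It then remains only to invoke the two cited results: by Proposition~\ref{prop 9} the map $\iota$ is a bijection, and by Theorem~\ref{th12} the restriction of $\mathcal E$ is a bijection onto $Ext(H, Q)$; a composite of bijections with matching domain and codomain is again a bijection. Concretely, $\iota$ sends the ${\mathcal A}_0$-orbit of a hypergroup $M_H \in Hg_0(H, M, \Xi)$ to the $\mathcal A$-orbit of the same $M_H$, and $\mathcal E$ then carries this orbit to the isomorphism class of the extension $f_0: H \rightarrow H \odot M$ associated with $M_H$. Since each step is a direct appeal to an earlier result, there is no real obstacle; the work lies entirely in checking that the bookkeeping of the two group actions and their orbit spaces lines up, which Proposition~\ref{prop 9} and Theorem~\ref{th12} have already arranged.
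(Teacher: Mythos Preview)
Your proposal is correct and mirrors the paper's own argument: the paper presents Theorem~\ref{th13} as an immediate consequence, writing only ``Using this proposition, we get a modification of Theorem 12,'' which is precisely the composition of the bijection $\iota$ from Proposition~\ref{prop 9} with the restricted $\mathcal E$ from Theorem~\ref{th12}. Your observation that $Hg_0(H, M, [\Xi]) \slash {\mathcal G}$ and $Hg_0(H, M, [\Xi]) \slash {\mathcal A}$ denote the same orbit set is the only bookkeeping needed, and it is correct.
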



\section{Commutative case and Schreier's theorem}

In this section, we suppose that the group $H$ is commutative.
Then, the formula of item (B) of Proposition 6 is reduced
to the form
$$
(B_{com}) \hspace{20mm} \Psi ([a, b], \alpha) = \Psi(a, \Psi(b, \alpha)).
\hspace{25mm}
$$
This means that $\Psi$ is a reverse action
of the group $(M, \Xi)$ on the group $H$ by automorphisms.
Thus, the condition $(B_{com})$
is equivalent to condition that
the mapping
$$
({\underline B}_{com}) \hspace{15mm}
{\underline \Psi}: (M, \Xi) \rightarrow Aut \, H, \quad
{\underline \Psi}_a(\alpha) = \Psi (a, \alpha) = {^a}\alpha
\hspace{20mm}
$$
is a reverse representation
(= reverse homomorphism of groups).

Condition (C) of Proposition 6 preserves its form
$$
(C_{com}) \hspace{20mm}(a, b) \cdot ([a, b], c) =
{^a}(b, \, c) \cdot (a, [b, c]),
\hspace{25mm}
$$
but now the operation of multiplication is commutative.

Formula $(iii)_{\kappa, 0}$ of Section 6
preserves its form as well, and formulas
$(ii)_{\kappa, 0}$  and $(iv)_{\kappa, 0}$
take the form
$$
(ii)_{\kappa, 0, com} \quad \quad ({^ a}\alpha)_\kappa = {^a}\alpha,
\quad \quad
$$
$$
(iv)_{\kappa, 0, com} \quad \quad
(a, b)_\kappa = (a, \, b) \cdot
\kappa_a \cdot {^a}\kappa_b \cdot \kappa_{[a,\, b]}^{-1}.
\quad \quad
$$

Let $Hg_0(H_{com}, M, \Xi, \Psi)$ be a subset of the set
$Hg_0(H_{com}, M, \Xi)$, consisting of all hypergroups
$M_H \in Hg_0(H_{com}, M, \Xi)$ with a fixed structural
mapping $\Psi$, and $Ext (H, (M, \Xi), {\underline \Psi})$
be a subset of the set $Ext (H, (M, \Xi))$, consisting of
all extensions from $Ext (H, (M, \Xi))$ with a fixed reverse
representation $\underline \Psi$.

\begin{theorem}
\label{th14}
The subset $Hg_0(H_{com}, M, \Xi, \Psi)$ of
$Hg_0(H_{com}, M, \Xi)$ is invariant under the action
${\mathcal A}_0$, consequently, we get a subset
$$
Hg_0(H_{com}, M, \Xi, \Psi) \slash {\mathcal A}_0
$$
of the set $Hg_0(H_{com}, M, \Xi) \slash {\mathcal A}_0$.
Hence, the restrictions of mappings ${\mathcal E}$ and
${\mathcal H}$, respectively, to
$Hg_0(H_{com}, M, \Xi, \Psi) \slash {\mathcal A}_0$
and $Ext(H_{com}, (M. \Xi), {\underline \Psi})$
determine mutually inverse mappings and,
 consequently, a bijection of corresponding sets.
\end{theorem}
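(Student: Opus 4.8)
The plan is to first prove the invariance asserted in the first part, then isolate the single invariant that cuts out the two subsets, and finally obtain the bijection by restricting Theorem \ref{th13}.

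First I would show that $Hg_0(H_{com}, M, \Xi, \Psi)$ is closed under $\mathcal{A}_0$. By Lemma \ref{lm15} the action $\mathcal{A}_0$ is the restriction of $\mathcal{A}$ to the elements $\kappa \in (H^M)_{rev}$, and it already preserves $\Xi$ by $(iii)_{\kappa,0}$. The feature special to the commutative case is formula $(ii)_{\kappa,0,com}$, which reads $({}^a\alpha)_\kappa = {}^a\alpha$; thus $\Psi$ is left unchanged by every $\kappa$. Consequently $(M_H)_\kappa$ has the same (trivial) $\Phi$, the same $\Xi$, and the same $\Psi$ as $M_H$, so $(M_H)_\kappa \in Hg_0(H_{com}, M, \Xi, \Psi)$. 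Invariance is then established, and $Hg_0(H_{com}, M, \Xi, \Psi)/\mathcal{A}_0$ is a well-defined subset of $Hg_0(H_{com}, M, \Xi)/\mathcal{A}_0$.

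The key step is to match the two fixed data. I would show that for $M_H \in Hg_0(H_{com}, M, \Xi, \Psi)$ the reverse representation of the extension $\varphi = \mathcal{E}(M_H)$ is exactly $\underline\Psi$. Indeed, in the outer exact product $\overline{G} = H \odot M$ the triviality of $\Phi$ gives $a^\alpha = a$, so Lemma \ref{lm6} yields $\overline{a} \cdot \overline\alpha = \overline{{}^a\alpha} \cdot \overline{a}$, that is $\overline{a} \cdot \overline\alpha \cdot \overline{a}^{-1} = \overline{{}^a\alpha} = \overline{\underline\Psi_a(\alpha)}$. Since $H$ is commutative, the inner automorphisms induced by $\overline{H}$ are trivial, so this conjugation map factors through the quotient $Q = (M, \Xi)$ and coincides, under the identification $\overline{f}_0$, with $\underline\Psi$. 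Hence $\mathcal{E}$ carries $Hg_0(H_{com}, M, \Xi, \Psi)$ into the normal extensions whose reverse representation is $\underline\Psi$, i.e. into $Ext(H_{com}, (M,\Xi), \underline\Psi)$. Conversely, starting from an extension with reverse representation $\underline\Psi$ and forming $\mathcal{H}(\varphi)$ via a transversal, the defining relation (F1) gives ${}^a\alpha = a \cdot \alpha \cdot a^{-1}$ (again using triviality of $\Phi$), which is precisely the conjugation action and hence equals $\underline\Psi_a(\alpha)$; so the structural mapping of $\mathcal{H}(\varphi)$ is the prescribed $\Psi$.

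Finally, the bijection follows by restriction. By Theorem \ref{th13} the maps $\mathcal{E}$ and $\mathcal{H}$ are mutually inverse between $Hg_0(H, M, \Xi)/(H^M)_{rev}$ and $Ext(H, Q)$; the preceding paragraph shows they respect the common invariant (fixed $\Psi$ on the left, fixed $\underline\Psi$ on the right), while the first paragraph shows that this invariant is constant along $\mathcal{A}_0$-orbits. Therefore the restrictions of $\mathcal{E}$ and $\mathcal{H}$ to $Hg_0(H_{com}, M, \Xi, \Psi)/\mathcal{A}_0$ and $Ext(H_{com}, (M,\Xi), \underline\Psi)$ remain mutually inverse, yielding the asserted bijection. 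I expect the main obstacle to be the careful verification that the conjugation-induced representation of the extension is literally the mapping $\underline\Psi$ under the identification $Q \cong (M, \Xi)$, since this is exactly the point where triviality of $\Phi$ and commutativity of $H$ must be invoked together.
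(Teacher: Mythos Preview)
Your proof is correct and follows essentially the same route as the paper: invariance via $(ii)_{\kappa,0,com}$, then the bijection by restricting Theorem~\ref{th13}. The paper's own proof is just these two citations, whereas you additionally spell out the verification that the mappings $\mathcal{E}$ and $\mathcal{H}$ match the invariant ``fixed $\Psi$'' with ``fixed $\underline\Psi$'' (using Lemma~\ref{lm6}, triviality of $\Phi$, and commutativity of $H$); this step is left entirely implicit in the paper but is exactly what is needed to justify that the restriction of the Theorem~\ref{th13} bijection lands in the right subset on each side.
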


\begin{proof}
The first part of this theorem follows from
$(ii)_{\kappa, 0, com}$, while
the second part follows from Theorem 13.
\end{proof}

The set
$Hg_0(H_{com}, M, \Xi, \Psi) \slash {\mathcal A}_0$
has another well known interpretation.
This interpretation
is based on the following four propositions.

\begin{proposition}
\label{prop10}
Let $H$ be a multiplicative group, $M$ be an arbitrary set,
$H^{M \times M}$ be the set of all mappings from the Cartesian
product $M \times M$ to $H$. Define a binary operation $\ast$ on
$H^{M \times M}$ by the formula
$$
(\Lambda \ast \Lambda') (a,b) =
\Lambda (a, b) \cdot \Lambda' (a, b),
$$
where $\Lambda, \Lambda' \in H^{M \times M}$ and $a, b \in M$.
This operation is associative, has a neutral element
\begin{center}
$O(a, b) = \varepsilon$ for all $a, b \in M$,
\end{center}
where $\varepsilon$ is the neutral element of $H$,
and for any element $\Lambda$ of $H^{M \times M}$ there is
an inverse element $\Lambda^{-1}$, determined by
\begin{center}
$\Lambda^{-1} (a, b) = \Lambda (a, b)^{-1}$
for any $a, b \in  M$.
\end{center}
Thus, the set $H^{M \times M}$ together with the
binary operation $\ast$ is a group.
\end{proposition}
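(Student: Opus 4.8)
The plan is to verify the three group axioms for $(H^{M\times M}, \ast)$ --- associativity, existence of a two-sided neutral element, and existence of two-sided inverses --- by reducing each to the corresponding axiom in $H$, exploiting the fact that $\ast$ is defined pointwise. The guiding observation throughout is that two maps in $H^{M\times M}$ are equal precisely when they agree after evaluation at every pair $(a,b)\in M\times M$, and that evaluating any $\ast$-expression at a fixed $(a,b)$ yields an expression in $H$ assembled solely from the values of its constituents at that same $(a,b)$. Hence every identity sought in $H^{M\times M}$ is equivalent to the family of its evaluations, each of which is controlled by the group structure of $H$.

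First I would check associativity. For arbitrary $\Lambda,\Lambda',\Lambda''\in H^{M\times M}$ and any $(a,b)$, one computes $((\Lambda\ast\Lambda')\ast\Lambda'')(a,b)=(\Lambda(a,b)\cdot\Lambda'(a,b))\cdot\Lambda''(a,b)$ on the one hand and $(\Lambda\ast(\Lambda'\ast\Lambda''))(a,b)=\Lambda(a,b)\cdot(\Lambda'(a,b)\cdot\Lambda''(a,b))$ on the other; these coincide by associativity in $H$. Since the two maps agree at every $(a,b)$, they are equal, so $\ast$ is associative.

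Next I would verify that the constant map $O$ with $O(a,b)=\varepsilon$ is a two-sided neutral element: $(\Lambda\ast O)(a,b)=\Lambda(a,b)\cdot\varepsilon=\Lambda(a,b)$ and symmetrically $(O\ast\Lambda)(a,b)=\varepsilon\cdot\Lambda(a,b)=\Lambda(a,b)$, using that $\varepsilon$ is neutral in $H$. Finally, for inverses I would set $\Lambda^{-1}(a,b):=\Lambda(a,b)^{-1}$ and compute $(\Lambda\ast\Lambda^{-1})(a,b)=\Lambda(a,b)\cdot\Lambda(a,b)^{-1}=\varepsilon=O(a,b)$, together with the analogous identity on the other side; thus $\Lambda\ast\Lambda^{-1}=\Lambda^{-1}\ast\Lambda=O$.

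I do not expect any genuine obstacle here: the statement is an instance of the general fact that the pointwise product of maps from a set into a group $H$ forms a group --- the direct power $H^{M\times M}$ --- and the sole point requiring care is to record explicitly that equality of maps is tested pointwise, after which each axiom descends verbatim from $H$. In keeping with the paper's style, this verification is essentially routine.
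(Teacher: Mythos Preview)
Your proposal is correct and matches the paper's approach exactly: the paper's own proof reads in its entirety ``The verification of the group axioms is trivial,'' and what you have written is precisely that verification spelled out pointwise. There is nothing to add.
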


\begin{proof}
The verification of the group axioms is trivial.
\end{proof}

The group $(H^{M \times M}, \ast)$ is denoted by $C^2(M, H)$
and is called a {\it group of two-dimensional cochains}.
Its elements are called {\it two-dimensional cochains}.

For  hypergroups over the group
$M_H \in Hg_0(H_{com}, M, \Xi, \Psi)$
only the structural mapping $\Lambda$
is changed, the other structural mappings are fixed.
So, we can identify
$Hg_0(H_{com}, M, \Xi, \Psi)$ with a
subset of the set $H^{M \times M}$, when
the group $H$ is commutative and $M$ is
a group with the binary operation $\Xi$.
Let us emphasize, that
$Hg_0(H_{com}, M, \Xi, \Psi)$ coincides with
the subset of the set $H^{M \times M}$,
defined by condition $(C_{com})$.

\begin{proposition}
\label{prop11}
The subset $Hg_0(H_{com}, M, \Xi, \Psi)$ of the
set $H^{M \times M}$ is closed under the binary operation
$\ast$ and forms a subgroup of the group
$(H^{M \times M}, \ast)$.
\end{proposition}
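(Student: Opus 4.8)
The plan is to exploit the identification, just established in the text, of $Hg_0(H_{com}, M, \Xi, \Psi)$ with the subset of $H^{M \times M}$ cut out by the single condition $(C_{com})$, and to show that this condition describes the kernel of a group homomorphism. Concretely, by the same trivial verification as in Proposition \ref{prop10}, the set $H^{M \times M \times M}$ of three-dimensional cochains is a group under the pointwise operation $\ast$, and I would define a map
$$
d: H^{M \times M} \rightarrow H^{M \times M \times M}, \quad
(d\Lambda)(a, b, c) = \Lambda(a, b) \cdot \Lambda([a, b], c) \cdot \Lambda(a, [b, c])^{-1} \cdot ({^a}(\Lambda(b, c)))^{-1}.
$$
Reading off $(C_{com})$, a cochain $\Lambda$ lies in $Hg_0(H_{com}, M, \Xi, \Psi)$ precisely when $d\Lambda = O$, i.e. $\Lambda \in \ker d$. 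Thus it suffices to prove that $d$ is a homomorphism of groups, since the kernel of a homomorphism is always a subgroup.

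The key step is therefore to verify $d(\Lambda \ast \Lambda') = d\Lambda \ast d\Lambda'$. Expanding $(d(\Lambda \ast \Lambda'))(a, b, c)$ by the pointwise definition of $\ast$, each of the four factors splits: the first two give $\Lambda(a, b)\Lambda'(a, b)$ and $\Lambda([a, b], c)\Lambda'([a, b], c)$; the third, being the inverse of a product, splits using commutativity of $H$; and the last requires ${^a}(\Lambda(b, c)\Lambda'(b, c)) = {^a}(\Lambda(b, c)) \cdot {^a}(\Lambda'(b, c))$, which holds because ${\underline \Psi}_a \in Aut\,H$ by item (B) of Proposition \ref{prop6}. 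Since $H$ is commutative, all of the resulting factors commute and may be regrouped into $(d\Lambda)(a, b, c) \cdot (d\Lambda')(a, b, c)$, proving that $d$ is a homomorphism. In particular $dO = O$, so $O \in \ker d$ and the subset is nonempty; closure under $\ast$ and under inversion then follow automatically from the general subgroup property of kernels.

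The only genuine subtlety — which is exactly where commutativity of $H$ and the fixed reverse representation ${\underline \Psi}$ enter essentially — is the bookkeeping in the fourth factor: one must distribute the automorphism ${^a}(\cdot)$ across the product $\Lambda(b, c)\Lambda'(b, c)$ and then commute it past the other terms. For a noncommutative $H$ this regrouping would fail, which is why the subgroup statement is placed in the commutative setting. If one prefers to avoid introducing the auxiliary group $H^{M \times M \times M}$, the same computation can be presented directly as the two subgroup-test verifications, namely that $(C_{com})$ is preserved under $\Lambda \mapsto \Lambda \ast \Lambda'$ and under $\Lambda \mapsto \Lambda^{-1}$; but phrasing it through the homomorphism $d$ makes the role of $\ker d$ transparent and sets up the cohomological interpretation that the following propositions presumably develop.
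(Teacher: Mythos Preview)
Your proof is correct. The paper proceeds by the direct subgroup test: it writes out explicitly that if $\Lambda$ and $\Lambda'$ satisfy $(C_{com})$ then so do $\Lambda \ast \Lambda'$ and $\Lambda^{-1}$, expanding each side and commuting factors. Your version repackages exactly the same computation as the verification that the coboundary map $d: H^{M\times M} \to H^{M\times M\times M}$ is a group homomorphism, so that the subset in question is $\ker d$. The underlying algebra is identical --- both arguments hinge on commutativity of $H$ to reshuffle the eight factors and on ${\underline\Psi}_a \in Aut\,H$ to distribute ${}^a(\cdot)$ over products --- and you yourself note that the direct subgroup test is the unwrapped form. What your formulation buys is that it makes the cohomological structure explicit one step earlier than the paper does, anticipating the groups $Z^2$, $B^2$, $H^2$ introduced immediately afterward; the paper's approach buys only that it avoids defining the auxiliary group of $3$-cochains.
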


\begin{proof}
Let us check that if $\Lambda, \Lambda' \in H^{M \times M}$
satisfy the condition $(C_{com})$:
$$
\Lambda(a, b) \cdot \Lambda([a, b], c) =
{^a}\Lambda(b, \, c) \cdot \Lambda(a, [b, c]),
$$
$$
\Lambda'(a, b) \cdot \Lambda'([a, b], c) =
{^a}\Lambda'(b, \, c) \cdot \Lambda'(a, [b, c]),
$$
then $\Lambda \ast \Lambda'$ and $\Lambda^{-1}$
also satisfy this condition. Indeed
$$
(\Lambda \ast \Lambda')(a, b) \cdot
(\Lambda \ast \Lambda')([a, b], c) =
\Lambda (a, b) \cdot \Lambda'(a, b) \cdot
\Lambda ([a, b], c) \cdot \Lambda'([a, b], c) =
$$
$$
= \Lambda (a, b) \cdot \Lambda ([a, b], c) \cdot
\Lambda'(a, b) \cdot \Lambda'([a, b], c) =
$$
$$
= {^a}\Lambda (b, \, c) \cdot \Lambda (a, [b, c]) \cdot
{^a}\Lambda'(b, \, c) \cdot \Lambda'(a, [b, c]) =
$$
$$
= {^a}\Lambda (b, \, c) \cdot {^a}\Lambda'(b, \, c) \cdot
\Lambda (a, [b, c]) \cdot\Lambda'(a, [b, c]) =
{^a}(\Lambda \ast \Lambda')(b, \, c) \cdot
(\Lambda \ast\Lambda')(a, [b, c]),
$$
$$
\Lambda^{-1}(a, b) \cdot \Lambda^{-1}([a, b], c) =
\Lambda (a, b)^{-1} \cdot \Lambda([a, b], c)^{-1} =
(\Lambda (a, b) \cdot \Lambda([a, b], c))^{-1} =
$$
$$
= ({^a}\Lambda (b, \, c) \cdot \Lambda (a, [b, c]))^{-1} =
{^a}\Lambda (b, \, c)^{-1} \cdot \Lambda (a, [b, c])^{-1} =
$$
$$
= {^a}\Lambda^{-1}(b, \, c) \cdot \Lambda^{-1}(a, [b, c]).
$$
The proposition is proved.
\end{proof}

The group $Hg_0(H_{com}, M, \Xi, \Psi)$
is denoted by $Z^2((M, \Xi), H, \underline{\Psi})$ and
is called a {\it group of two-dimensional cocycles}.
The elements of this group
are called {\it systems of factors} $\cite{K}$ or
{\it factor sets} $\cite{R}$ or {\it two-dimensional
cocycles}.

For an arbitrary $\kappa \in H^M$,
an element $\Lambda_\kappa$ of $H^{M \times M}$
is defined as follows:
$$
\Lambda_\kappa (a, b) = \kappa_a \cdot
{^a}\kappa_b \cdot \kappa_{[a, b]}^{-1},
$$
where as usual $[a, b] = \Xi (a, b)$.
Recall that by definition
$$
\kappa_a^{-1} = (\kappa^{-1})_a = (\kappa_a)^{-1}.
$$

\begin{proposition}
\label{prop12}
For any $\kappa, \lambda \in H^M$
the following formulas are true:
$$
\Lambda_\kappa \ast \Lambda_\lambda =
\Lambda_{\kappa \ast \lambda}, \quad
\Lambda_\kappa^{-1} = \Lambda_{\kappa^{-1}}.
$$
Consequently,
$\{ \Lambda_\kappa, \, \kappa \in H^M  \}$
forms a subgroup of the group $(H^{M \times M}, \ast)$.
\end{proposition}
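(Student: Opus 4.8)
The plan is to show that the assignment $\kappa \mapsto \Lambda_\kappa$ is a homomorphism from the group $(H^M, \ast)$ into the group $(H^{M \times M}, \ast) = C^2(M, H)$ of Proposition $\ref{prop10}$; the two displayed identities are precisely the statements that this map respects products and inverses, and the final assertion then follows because the image of a group homomorphism is a subgroup.

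First I would prove the product formula by evaluating both sides at an arbitrary pair $(a, b) \in M \times M$. By the definition of $\ast$ on $H^{M \times M}$ and of $\Lambda_\kappa$,
$$
(\Lambda_\kappa \ast \Lambda_\lambda)(a, b) = \kappa_a \cdot {^a}\kappa_b \cdot \kappa_{[a, b]}^{-1} \cdot \lambda_a \cdot {^a}\lambda_b \cdot \lambda_{[a, b]}^{-1},
$$
whereas, using $(\kappa \ast \lambda)_c = \kappa_c \cdot \lambda_c$ at the three points $c = a, b, [a, b]$,
$$
\Lambda_{\kappa \ast \lambda}(a, b) = (\kappa_a \lambda_a) \cdot {^a}(\kappa_b \lambda_b) \cdot (\kappa_{[a, b]} \lambda_{[a, b]})^{-1}.
$$
The key input is that, since $\Phi$ is trivial, the map ${\underline \Psi}_a$ is a homomorphism --- this is identity $(A1)$ specialised to $a^\alpha = a$ --- so that ${^a}(\kappa_b \lambda_b) = {^a}\kappa_b \cdot {^a}\lambda_b$. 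After this substitution both expressions are products of the same six factors $\kappa_a, {^a}\kappa_b, \kappa_{[a, b]}^{-1}, \lambda_a, {^a}\lambda_b, \lambda_{[a, b]}^{-1}$, and because $H$ is commutative they may be reordered into one another. This use of commutativity is essential: the same computation fails for a noncommutative $H$.

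For the inverse formula I would not repeat the calculation but deduce it from the product formula. Taking $\lambda = \kappa^{-1}$ gives $\Lambda_\kappa \ast \Lambda_{\kappa^{-1}} = \Lambda_{\kappa \ast \kappa^{-1}} = \Lambda_{\varepsilon^M}$, and
$$
\Lambda_{\varepsilon^M}(a, b) = \varepsilon \cdot {^a}\varepsilon \cdot \varepsilon^{-1} = \varepsilon
$$
by $(A6)$, so $\Lambda_{\varepsilon^M}$ is the neutral cochain $O$ of $C^2(M, H)$. Hence $\Lambda_{\kappa^{-1}}$ is the (two-sided, by commutativity) inverse of $\Lambda_\kappa$, that is $\Lambda_\kappa^{-1} = \Lambda_{\kappa^{-1}}$.

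Finally the subgroup claim is immediate: the product formula shows $\{ \Lambda_\kappa,\ \kappa \in H^M \}$ is closed under $\ast$, the inverse formula shows it is closed under inversion, and it contains $\Lambda_{\varepsilon^M} = O$, so it is a subgroup of $(H^{M \times M}, \ast)$. I expect no genuine obstacle here --- the only points requiring care are invoking the automorphism property of $\Psi$ to distribute ${^a}(-)$ over a product and using the commutativity of $H$ to match the reordered factors.
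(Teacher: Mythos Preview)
Your proof is correct and follows essentially the same approach as the paper: both expand $(\Lambda_\kappa \ast \Lambda_\lambda)(a,b)$ and $\Lambda_{\kappa\ast\lambda}(a,b)$ using the multiplicativity of ${\underline\Psi}_a$ and the commutativity of $H$ to match up the six factors. The only minor difference is that the paper verifies $\Lambda_\kappa^{-1}=\Lambda_{\kappa^{-1}}$ by a second direct computation, whereas you deduce it from the product formula via $\Lambda_{\varepsilon^M}=O$; both are equally valid and equally short.
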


\begin{proof}
For any $a, b \in M$ we have
$$
(\Lambda_\kappa \ast \Lambda_\lambda)(a, b) =
\Lambda_\kappa(a, b) \cdot \Lambda_\lambda (a, b) =
(\kappa_a \cdot {^a}\kappa_b \cdot \kappa_{[a, b]}) \cdot
(\lambda_a \cdot {^a}\lambda_b \cdot \lambda_{[a, b]}) =
$$
$$
= (\kappa_a \cdot \lambda_a) \cdot
({^a}\kappa_b \cdot {^a}\lambda_b) \cdot
(\kappa_{[a, b]}) \cdot \lambda_{[a, b]}) =
(\kappa \ast \lambda)_a \cdot
{^a}(\kappa \ast \lambda)_b \cdot
(\kappa \ast \lambda)_{[a, b]} =
$$
$$
= \Lambda_{\kappa \ast \lambda}(a, b),
$$
$$
\Lambda_\kappa^{-1}(a, b) =
(\Lambda_\kappa)^{-1}(a, b) =
\Lambda_\kappa (a, b)^{-1} =
(\kappa_a \cdot {^a}\kappa_b \cdot \kappa_{[a, b]})^{-1} =
$$
$$
= \kappa_a^{-1} \cdot {^a}(\kappa_b^{-1}) \cdot \kappa_{[a, b]}^{-1}  =
\Lambda_{\kappa^{-1}}(a, b).
$$
The proposition is proved.
\end{proof}

The elements $\Lambda_\kappa, \, \kappa \in H^M$
are called {\it two-dimensional coboundaries},
the group $(\{ \Lambda_\kappa, \, \kappa \in H^M  \}, \ast)$
 is called a {\it group of two-dimensional coboundaries}
and is denoted by $B^2((M, \Xi), H, \underline{\Psi})$.

\begin{proposition}
\label{prop15}
For any $\kappa \in H^M$ the element
$\Lambda_\kappa$ of $H^{M \times M}$
satisfies the condition $(C_{com})$.
Consequently, $B^2((M, \Xi), H, \underline{\Psi})$
is a subgroup of the group
$Z^2((Mm \Xi), H, \underline{\Psi})$.
\end{proposition}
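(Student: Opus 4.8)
The plan is to verify the cocycle identity $(C_{com})$ directly for the coboundary $\Lambda_\kappa$, and then to read off the subgroup statement from Proposition~\ref{prop12}. Recall that, written for an element $\Lambda$ of $H^{M \times M}$, condition $(C_{com})$ reads
$$
\Lambda(a, b) \cdot \Lambda([a, b], c) = {^a}\Lambda(b, \, c) \cdot \Lambda(a, [b, c]),
$$
and that by Proposition~\ref{prop11} the subset of $H^{M \times M}$ cut out by this identity is exactly $Z^2((M, \Xi), H, \underline{\Psi})$. So the whole task reduces to checking the displayed equation for $\Lambda = \Lambda_\kappa$.

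First I would expand the left-hand side using $\Lambda_\kappa(a, b) = \kappa_a \cdot {^a}\kappa_b \cdot \kappa_{[a, b]}^{-1}$. The factor $\kappa_{[a, b]}^{-1}$ closing $\Lambda_\kappa(a, b)$ and the factor $\kappa_{[a, b]}$ opening $\Lambda_\kappa([a, b], c)$ telescope away, leaving
$$
\kappa_a \cdot {^a}\kappa_b \cdot {^{[a, b]}}\kappa_c \cdot \kappa_{[[a, b], c]}^{-1}.
$$
For the right-hand side I would apply the automorphism $\underline{\Psi}_a$ to $\Lambda_\kappa(b, c)$; since it is a homomorphism it distributes, giving ${^a}\Lambda_\kappa(b, c) = {^a}\kappa_b \cdot {^a}({^b}\kappa_c) \cdot ({^a}\kappa_{[b, c]})^{-1}$. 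The one genuinely structural step is here: invoking the reverse-representation property $(B_{com})$ in the form ${^a}({^b}\kappa_c) = {^{[a, b]}}\kappa_c$, which converts the middle factor into precisely the term produced on the left. Multiplying by $\Lambda_\kappa(a, [b, c]) = \kappa_a \cdot {^a}\kappa_{[b, c]} \cdot \kappa_{[a, [b, c]]}^{-1}$ and using commutativity of $H$ to bring $({^a}\kappa_{[b, c]})^{-1}$ next to ${^a}\kappa_{[b, c]}$, these cancel, and the right-hand side collapses to
$$
\kappa_a \cdot {^a}\kappa_b \cdot {^{[a, b]}}\kappa_c \cdot \kappa_{[a, [b, c]]}^{-1}.
$$
Finally associativity of the group $(M, \Xi)$ gives $[[a, b], c] = [a, [b, c]]$, hence $\kappa_{[[a, b], c]} = \kappa_{[a, [b, c]]}$, and the two sides coincide.

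For the consequence, the identity just verified shows that every $\Lambda_\kappa$ lies in the subset of $H^{M \times M}$ defined by $(C_{com})$, i.e. in $Z^2((M, \Xi), H, \underline{\Psi})$. By Proposition~\ref{prop12} the set $\{\Lambda_\kappa : \kappa \in H^M\} = B^2((M, \Xi), H, \underline{\Psi})$ is already a subgroup of $(H^{M \times M}, \ast)$; being contained in the subgroup $Z^2$, it is a subgroup of $Z^2$.

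The difficulty, such as it is, is entirely bookkeeping: the argument hinges on pairing the telescoping $\kappa_{[\,\cdot\,]}$ factors correctly and on applying $(B_{com})$ at the single point ${^a}({^b}\kappa_c) = {^{[a, b]}}\kappa_c$. Commutativity of $H$ is what lets the two stray $\underline{\Psi}_a$-images of $\kappa_{[b, c]}$ annihilate each other irrespective of their position; without it one would have to track the ordering of factors far more carefully, and the clean cancellation would fail.
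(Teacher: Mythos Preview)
Your proof is correct and follows essentially the same direct verification as the paper: expand both sides of $(C_{com})$ for $\Lambda_\kappa$, use the telescoping of the $\kappa_{[\,\cdot\,]}$ factors, invoke $(B_{com})$ to rewrite ${^a}({^b}\kappa_c)$ as ${^{[a,b]}}\kappa_c$, and appeal to associativity of $(M,\Xi)$ and commutativity of $H$ to finish. The paper organises the same computation as a transformation of the right-hand side into the left-hand side, while you reduce each side to a common expression, but the ingredients and order of use are identical.
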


\begin{proof}
Below,  for any $x, y \in M$, we use in $(C_{com})$
the following notation:
$$
(x, y) = \Lambda_\kappa (x, y) =
\kappa_x \cdot {^x}\kappa_y \cdot \kappa_{[x, y]}^{-1}.
$$
Then, equality $(C_{com})$ is true because
$$
{^a}(b, \, c) \cdot (a, [b, c]) =
{^a}(\kappa_b \cdot {^b}\kappa_c \cdot \kappa_{[b, c]}^{-1}) \cdot
(\kappa_a \cdot {^a}\kappa_{[b, c]} \cdot \kappa_{[a, [b, c]]}^{-1}) =
$$
$$
= {^a}\kappa_b \cdot {^a}({^b}\kappa_c) \cdot {^a}\kappa_{[b, c]}^{-1} \cdot
\kappa_a \cdot {^a}\kappa_{[b, c]} \cdot \kappa_{[a, [b, c]]}^{-1} =
$$
$$
= \kappa_a \cdot {^a}\kappa_b \cdot {^a}({^b}\kappa_c) \cdot
\kappa_{[a, [b, c]]}^{-1} =
\kappa_a \cdot {^a}\kappa_b \cdot
{^{[a, b]}}\kappa_c \cdot \kappa_{[[a, b], c]}^{-1} =
$$
$$
= (\kappa_a \cdot {^a}\kappa_b \cdot \kappa_{[a, b]}^{-1})
\cdot  (\kappa_{[a, b]} \cdot
{^{[a, b]}}\kappa_c \cdot \kappa_{[[a, b], c]}^{-1}) =
(a, b) \cdot ([a, b], c).
$$
The proposition is proved.
\end{proof}

When $H$ is an abelian group, the group
$Z^2((M, \Xi), H, \underline{\Psi})$
is commutative. Consequently,
$B^2((M, \Xi), H, \underline{\Psi})$ is
its normal subgroup.
Therefore we can consider
the quotient-group
$Z^2((M, \Xi), H, \underline{\Psi}) \slash
B^2((M, \Xi), H, \underline{\Psi})$.
This quotient-group is denoted by
$H^2((M, \Xi), H, \underline{\Psi})$ and
is called a {\it two-dimensional cohomology group}.

\begin{theorem}
\label{th15}
Let $H$ be a commutative group. Let  $M_H$ be an arbitrary
representative of an orbit from
$Hg_0(H, M, \Xi, \Psi) \slash {\mathcal A}_0$
and $\Lambda \in H^{M \times M}$ be its structural
mapping.
Then,
by assigning to the orbit of $M_H$
the element $\Lambda \ast B^2((M, \Xi), H, \underline{\Psi})
\in H^2((M, \Xi). H, \underline{\Psi})$,
a bijection
$$
j: Hg_0(H, M, \Xi, \Psi) \slash {\mathcal A}_0
\rightarrow H^2((M, \Xi). H, \underline{\Psi})
$$
is correctly determined.
\end{theorem}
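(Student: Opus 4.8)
The plan is to realize $j$ as the canonical quotient map $Z^2 \to Z^2 \slash B^2$ and to check that the orbits of ${\mathcal A}_0$ are exactly the cosets of the coboundary subgroup. Recall that by the discussion preceding Proposition \ref{prop11}, the set $Hg_0(H_{com}, M, \Xi, \Psi)$ is identified, as a set, with the cocycle group $Z^2((M, \Xi), H, \underline{\Psi})$: a hypergroup over the group in this class is completely determined by its single free structural mapping $\Lambda$, which ranges exactly over those elements of $H^{M \times M}$ satisfying $(C_{com})$. By Proposition \ref{prop15} we moreover have $B^2((M, \Xi), H, \underline{\Psi}) \subset Z^2((M, \Xi), H, \underline{\Psi})$, and since $H$ is commutative this is an inclusion of abelian groups, so that the quotient $H^2((M, \Xi), H, \underline{\Psi})$ makes sense.

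First I would compute the orbits of ${\mathcal A}_0$ explicitly. Applying formula $(iv)_{\kappa, 0, com}$, the element $\kappa \in (H^M)_{rev}$ carries the cocycle $\Lambda$ to the cocycle
$$
(a, b) \mapsto \Lambda(a, b) \cdot \kappa_a \cdot {^a}\kappa_b \cdot \kappa_{[a, b]}^{-1} = (\Lambda \ast \Lambda_\kappa)(a, b),
$$
so that the orbit of $\Lambda$ is the set $\{ \Lambda \ast \Lambda_\kappa : \kappa \in H^M \}$. Since the coboundary group $B^2((M, \Xi), H, \underline{\Psi})$ is by definition the set $\{ \Lambda_\kappa : \kappa \in H^M \}$, and is a genuine subgroup of $Z^2$ by Propositions \ref{prop12} and \ref{prop15}, this orbit is precisely the coset $\Lambda \ast B^2((M, \Xi), H, \underline{\Psi})$.

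With the orbits identified as cosets, the theorem is immediate. The assignment $j$ sends the orbit of $\Lambda$ to the coset $\Lambda \ast B^2((M, \Xi), H, \underline{\Psi})$, and it is well defined exactly because two cocycles lie in the same ${\mathcal A}_0$-orbit if and only if they differ by a coboundary, hence determine the same class in $H^2((M, \Xi), H, \underline{\Psi})$. Surjectivity of $j$ is clear, since every coset is represented by some cocycle, and injectivity is the converse half of the orbit description. I expect the only real content, already supplied by the preparatory propositions, to be the twofold matching between the action and the coboundaries: that every $\Lambda_\kappa$ actually lies in $Z^2$, so that ${\mathcal A}_0$ preserves $Hg_0(H_{com}, M, \Xi, \Psi)$, and that $\kappa \mapsto \Lambda_\kappa$ exhausts $B^2$. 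Granting these, the statement reduces to the tautology that the orbits of the translation action of a subgroup on an abelian group are its cosets, and no further computation is needed.
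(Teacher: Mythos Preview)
Your proposal is correct and follows essentially the same approach as the paper: the paper's proof simply says the assertion ``immediately follows from $(iv)_{\kappa, 0, com}$ and the definition of a quotient-group,'' and your argument is precisely the unpacking of that sentence, identifying the ${\mathcal A}_0$-orbits with the $B^2$-cosets via $(iv)_{\kappa, 0, com}$.
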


\begin{proof}
The assertion of this theorem immediately follows
from $(iv)_{\kappa, 0, com}$ and the definition
of a quotient-group.
\end{proof}
Combining Theorems 14 and 15 and denoting $Q = (M, \Xi)$,
we get the theorem of Schreier-1926 ($\cite{S1}, \cite{S2}$
see also $\cite{K}, \cite{R}$).

\begin{theorem}
\label{th16}
Let $H$ be an abelian group and $Q$ be an arbitrary group.
Then there exists a canonical bijection between
the set of extensions
$Ext (H, Q, \underline{\Psi})$ of  $H$ by $Q$
with an arbitrary fixed reverse homomorphism
$\underline{\Psi}: Q \rightarrow Aut H$
and the two-dimensional cohomology group
$H^2 (Q, H, \underline{\Psi})$.
This bijection is given by the composite
${\mathcal H} \cdot \iota \cdot j$.
\end {theorem}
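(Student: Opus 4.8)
The plan is to obtain the bijection simply as the composite of the two bijections already proved in Theorems~\ref{th14} and~\ref{th15}, once the abstract data $(Q,\underline\Psi)$ have been encoded as the concrete data $(M,\Xi,\Psi)$ used throughout this section. First I would fix a set $M$ with $|M|=|Q|$ together with a bijection $Q\to M$ that transports the group law of $Q$ to a structural operation $\Xi$ on $M$, so that $(M,\Xi)$ is isomorphic to $Q$; by $(\underline{B}_{com})$ the prescribed reverse homomorphism $\underline\Psi\colon Q\to \mathrm{Aut}\,H$ then determines a fixed structural mapping $\Psi$ on $(M,H)$. With these choices the left-hand side becomes $Ext(H,(M,\Xi),\underline\Psi)$ and, since $H$ is abelian, the group $Z^2((M,\Xi),H,\underline\Psi)$ and its normal subgroup $B^2((M,\Xi),H,\underline\Psi)$ are defined, so the right-hand side is their quotient $H^2((M,\Xi),H,\underline\Psi)$.

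Next I would chain the established bijections. Theorem~\ref{th14} (which rests on Theorem~\ref{th13} and the mediating bijection $\iota$ of Proposition~\ref{prop 9}) gives a bijection between $Ext(H,(M,\Xi),\underline\Psi)$ and $Hg_0(H_{com},M,\Xi,\Psi)\slash{\mathcal A}_0$, realized by the restriction of $\mathcal H$; Theorem~\ref{th15} gives the bijection $j$ from the same quotient $Hg_0(H_{com},M,\Xi,\Psi)\slash{\mathcal A}_0$ onto $H^2((M,\Xi),H,\underline\Psi)$, sending the ${\mathcal A}_0$-orbit of a hypergroup with structural mapping $\Lambda$ to the coset $\Lambda\ast B^2((M,\Xi),H,\underline\Psi)$. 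Since each factor is a bijection, the composite ${\mathcal H}\cdot\iota\cdot j$ is a bijection $Ext(H,Q,\underline\Psi)\to H^2(Q,H,\underline\Psi)$, with inverse obtained by reversing the chain, i.e.\ using $\mathcal E$ in place of $\mathcal H$ and $j^{-1}$ in place of $j$; its canonicity is inherited from that of each factor.

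The only genuine point, and where I expect the bookkeeping to concentrate, is the matching of the two descriptions of the intermediate set: Theorem~\ref{th14} delivers the hypergroup side through the $[\Xi]$-with-$\mathcal G$ picture of Theorems~\ref{th12}--\ref{th13}, while Theorem~\ref{th15} uses the $\Xi$-with-$(H^M)_{rev}$ picture, and one must confirm that the mediating $\iota$ identifies them without disturbing the fixed $\Psi$. This reduces to formula $(iv)_{\kappa,0,com}$, which shows that varying $\kappa\in H^M$ alters the structural mapping $\Lambda$ exactly by the coboundary $\Lambda_\kappa$, so that the ${\mathcal A}_0$-orbits coincide with the cosets modulo $B^2$; the commutativity of $H$, used already to make $B^2$ normal in $Z^2$, is precisely what guarantees $H^2$ is a group and the quotient is meaningful. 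Everything else is the formal composition of previously proved bijections, so no fresh calculation is needed.
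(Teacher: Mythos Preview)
Your proposal is correct and follows essentially the same route as the paper: the paper's proof consists of the single sentence ``Combining Theorems~\ref{th14} and~\ref{th15} and denoting $Q=(M,\Xi)$, we get the theorem of Schreier-1926,'' and you carry out precisely this combination. Your additional bookkeeping about the role of $\iota$ in passing between the $[\Xi]/\mathcal G$ and $\Xi/(H^M)_{rev}$ pictures is more explicit than the paper (which absorbs this into the statement of Theorem~\ref{th14} via its appeal to Theorem~\ref{th13}), but it is accurate and harmless.
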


\begin{remark}
Let $H$ and $Q$ be some groups,
$Ext(H, Q)$ be the set of isomorphic classes
of extensions of $H$ by $Q$.
We have proved that the set $Ext(H, Q)$ is parametrized
by the two-dimensional cohomology groups
$H^2 (Q, H, \underline{\Psi})$, where $\underline \Psi$
runs over all reverse representations $Q \rightarrow Aut H$,
under the condition that the group $H$ is abelian.
We have used this condition in three points:

(a) to rearrange the factors in the right-hand side
of the relation (A3),

(b) to have that $B^2(Q, H, \underline{\Psi})$ is
a normal subgroup of $Z^2(Q, H, \underline{\Psi})$,

(c) $B^2(Q, H, \underline{\Psi}) =
\{\Lambda_\kappa, \, \kappa \in H^M \}$,
where $Q = (M, \Xi)$,
is closed with respect to operation $\ast$.

For (a), a necessary and sufficient condition is that
the image $im \Lambda$ of the structural mapping $\Lambda$
lies in the center $Z(H)$ of the group $H$.
If $im \Lambda \subset Z(H)$, then evidently
$\Lambda^{-1} \ast \Lambda_{\kappa} \ast \Lambda = \Lambda_\kappa$
for any $\kappa \in H^M$.
But for (c), a necessary and sufficient condition is
the commutativity of the group $H$.
Therefore, a description of $Ext (H, Q, , \underline{\Psi})$
as in Theorem $\ref{th16}$ for a noncommutative group $H$
is impossible.
\end{remark}

\end{document}